\numberwithin{equation}{section}
\newtheorem{Theorem}{Theorem}[section]
\newtheorem{Lemma}[Theorem]{Lemma}
\newtheorem{Proposition}{Proposition}[section]
\newtheorem{Definition}{Definition}[section]
\newtheorem{remark}[Theorem]{Remark}
\newtheorem{Question}{Question}[section]
\renewcommand{\epsilon}{\varepsilon}
\title[Uniqueness and Nondegeneracy of Positive Solutions]
{Uniqueness and Nondegeneracy of ground states of $ -\Delta u +  (-\Delta)^s u+u = u^{p+1} \quad \hbox{in $\mathbb{R}^n$}$ when $s$ is close to $0$ and $1$}
\author[X. Su]{Xifeng Su}
\address{School of Mathematical Sciences, Laboratory of Mathematics and Complex Systems (Ministry of Education)\\
	Beijing Normal University,
	No. 19, XinJieKouWai St., HaiDian District, Beijing 100875, P. R. China}
\email{xfsu@bnu.edu.cn, billy3492@gmail.com}
\author[C.Zhang]{Chengxiang Zhang}
\address{School of Mathematical Sciences, Laboratory of Mathematics and Complex Systems (Ministry of Education)\\
	Beijing Normal University,
	No. 19, XinJieKouWai St., HaiDian District, Beijing 100875, P. R. China}
\email{zcx@bnu.edu.cn}
\author[J. Zhang]{Jiwen Zhang}
\address{School of Mathematical Sciences, Laboratory of Mathematics and Complex Systems (Ministry of Education)\\
	Beijing Normal University,
	No. 19, XinJieKouWai St., HaiDian District, Beijing 100875, P. R. China}
\email{jwzhang826@mail.bnu.edu.cn,jiwen.zhang@uwa.edu.au}
\subjclass[2020]{35A02,\,35B65,\,35J10,\,35R11.}
\keywords{Uniqueness, Nondegeneracy,  Mixed local/nonlocal Schr\"{o}dinger equation}
\begin{document}
	\maketitle
	
	\begin{abstract}
		 We are concerned with the  mixed local/nonlocal Schr\"{o}dinger equation
		\begin{equation}
			- \Delta u +  (-\Delta)^s u+u = u^{p+1} \quad \hbox{in $\mathbb{R}^n$,}
		\end{equation}
		for arbitrary space dimension $n\geqslant1$, $s\in(0,1)$, and $p\in(0,2^*-2)$ with $2^*$ the critical Sobolev exponent. 
		
		We provide the existence  and several fundamental properties of nonnegative solutions for the above equation. And then, we prove that, if $s$ is close to $0$ and $1$, respectively,  such equation then possesses a unique (up to translations) ground state, which is nondegenerate.				
	\end{abstract}

	\tableofcontents
	
	\section{Introduction}

%	
%	In view of the recent results on ground states, we observe that, the  case that is not  scale-invariant can be settled here with the help of 
%	compactness and branch arguments from the  scale-single case for any fractional parameter $s\in(0,1)$.
	
	The main purpose of this paper is to derive some uniqueness and nondegeneracy results for positive solutions of the following Schr\"{o}dinger equation driven by  the mixed local/nonlocal operator:
	 \begin{equation}\label{main equation}
	 \begin{cases}
	 		- \Delta u +  (-\Delta)^s u+u = u^{p+1} \quad \hbox{in $\mathbb{R}^n$,}\\
	 		u\in H^1(\mathbb{R}^n),  \quad u\geqslant 0.
	 \end{cases}
	  \end{equation}
   Here, the space dimension $n\geqslant1$ could be arbitrary and $s$ could be any real number in $(0,1)$, and the fractional Laplacian $ (-\Delta )^{s} $ is defined as
  \begin{equation}
  	(-\Delta )^{s} u(x)=c_{n,s}\,\text{PV}\int_{\mathbb{R}^{n}} \frac{u(x)-u(y)}{|x-y|^{n+2s}}\, dy,
  \end{equation}
  where $ c_{n,s} $ is a dimensional constant that depends on $n$ and $s$, precisely given
  \[ c_{n,s}=\left(\int_{{\mathbb{R}^n}} \frac{1-\cos(\xi_1)}{|\xi|^{n+2s}}\, d\xi\right)^{-1}.\]
  In view of \cite[Proposition~4.1]{MR2944369}, we have 
  \begin{equation}\label{c_n,s}
  	\lim\limits_{s\rightarrow 1^{-}}\frac{c_{n,s}}{s (1-s)}=\begin{cases}
  		\,\frac{4n}{\omega_{n-1} }\quad &\text{if } n\geqslant 2\\
  		2 & \text{if } n=1, 
  	\end{cases}  \quad \text{and} \quad
  	\lim\limits_{s\rightarrow 0^{+}}\frac{c_{n,s}}{s (1-s)}=\begin{cases}
  		\,\frac{2}{\omega_{n-1} }	\quad &\text{if } n\geqslant 2\\
  		\,1 & \text{if } n=1.
  	\end{cases}
  \end{equation}
  
%$\left(\frac{\omega_{n-2}}{2}\int_{0}^{\infty}\frac{\rho^{n-2}}{(1+\rho^2)^{\frac{n}{2}+1}}\, d\rho\right)^{-1}$

  Here and throughout the following, we assume that the exponent in the nonlinearity satisfies $0<p<2^*-2, $
%  where \begin{equation}
%  \bar{p}:=\begin{cases}
%  		\frac{4}{n-2} \quad & \text{for }n>2,\\
%  		+\infty &\text{for } n=1,2.
%  	\end{cases}
%  \end{equation}
%  Note that the upperbound on the exponent $p$ is exactly $2^*-2$,
 where \begin{equation}
  	2^*:=\begin{cases}
  		\frac{2n}{n-2} \quad & \text{for }n>2,\\
  		+\infty &\text{for } n=1,2,
  	\end{cases}
  \end{equation}
   is the critical Sobolev exponent of the embedding $H^1\hookrightarrow L^{p+2}$.

 %  This exponent fulfills a role for the nonlinear analysis methods for equations in bounded domains (see e.g. \cite{SVWZ}).
   In this framework, the classical local diffusion induced by Brownian motions is replaced by a mixture of local and non-local diffusions  driven by a Brownian motion and an independent symmetric $2s$-stable process, which is a general Markov process with both a continuous component and a discontinuous component. These types of diffusive operators arise in both pure mathematical research and real world applications and receive a great attention recently, see for instance \cite{llaveV09, BCCI12, CKS12, BD13, DV21, BVDV21, BDVV22b,  DSVZ24, SVWZ22, MR4803021, MR4808805} and so on.
   
   This superposition of stochastic processes in the quantum action functional gives
   rise precisely to the mixed local/nonlocal Schr\"{o}dinger equation in \eqref{main equation} while the background of  the non-relativistic quantum mechanics formulated by Feynman and Hibbs \cite{FHbook} as a path integral over the Brownian paths  leads to the standard Schr\"{o}dinger equation.
   
%    to describe that  the diffusive operator
%   in the action functional of the Feynman path integral is of ``mixed type'', i.e., rather than possessing a given scaling invariance,
%   it is obtained as the overlapping of two different diffusive operators.
   
 The nondegeneracy and uniqueness results of the standard Schr\"{o}dinger equation have been established in \cite{Coffman72, MR886933, MR969899, MR1201323}, and these  conclusions were extended into the nonlocal case in \cite{AT91, MR3070568, MR3207007, MR3530361} and references therein.
In contrast to both situations above with single scale, % (i.e., the case of single classical Laplacian or single fractional Laplacian), 
it seems  that extremely little is known about these properties in a mixed local/nonlocal framework that is not scale-invariant.  
Here, the simultaneous presence of a leading local
operator, and a lower order fractional one, constitutes the essence of the matter.

Following the spirit of \cite{MR3207007,MR3070568,MR3530361} and exploring the essential properties of the present mixed operator, we take an approach that relies more
on the estimates available in regularity theory of local operators and succeed in obtaining some uniform  estimates for solutions  independent of $s$ and then establish the nondegeneracy and uniqueness results.

To well formulate the main results in this paper, let us first introduce the corresponding minimization problem of the  energy functional of 
\[ 
\mathcal{F}_s(u):=\frac{1}{2}\left[\int_{{\mathbb{R}^{n}}}\left(|\nabla u|^2+|(-\Delta)^{s/2}u|^2+|u|^2\right)\, dx\right]-\frac{1}{p+2}\int_{{\mathbb{R}^{n}}}|u|^{p+2}\, dx \qquad \forall u\in H^1(\mathbb{R}^n)
\]
over the Nehari manifold
\[ \mathcal{B}_s:=\left\{u\in H^1(\mathbb{R}^n):u\not\equiv 0 \text{ and }\, \left\langle \nabla \mathcal{F}_s(u),u \right\rangle_{H^1}=0 \right\}. \]

One may observe that, by some suitable rescaling, to find a minimizer of the above minimization problem of $\mathcal{F}_s$ over $\mathcal{B}_s $ could be transformed into finding
a minimizer $u_s\in H^1(\mathbb{R}^n)$ of a minimization problem of the following functional 
\begin{equation}\label{j}
	J_s(v):=\frac{\int_{{\mathbb{R}^{n}}}\left(|\nabla v|^2+|(-\Delta)^{s/2}v|^2+|v|^2\right)\, dx}{\left(\int_{{\mathbb{R}^{n}}}|v|^{p+2}\, dx\right)^{2/(p+2)}}  \qquad \text{ for any } v\in H^1(\mathbb{R}^n) \text{ with } v\not\equiv 0. 
\end{equation}
Note that  the functional $J_s$ has indeed minimizers  $u_s\in H^1(\mathbb{R}^n)$ due to  the concentration-compactness principle (see e.g. \cite{MR778974}).

In other words, the above minimizers $u_s$ attain the greatest lower bound:
\begin{equation}\label{lambda s}
	\lambda_s:=\inf_{v\in H^1(\mathbb{R}^n)\backslash\left\{0\right\}}J_s(v)=\inf_{
		v\in H^1(\mathbb{R}^n)
		\atop 
		\|v\|_{L^{p+2}(\mathbb{R}^n)}=1
		}\int_{{\mathbb{R}^{n}}}\left(|\nabla v|^2+|(-\Delta)^{s/2}v|^2+|v|^2\right)\, dx.
\end{equation} 
%then any non-negative
%$v\in H^1(\mathbb{R}^n)$ with $J_s(v)=\lambda_s $ is found to solve equation~\eqref{main equation} after rescaling $v(x)\mapsto \lambda_s^{1/p}v(x)$. 

From \cite{DSVZ24}, the following existence result along with fundamental properties of nonnegative solutions for equation~\eqref{main equation} can be summarized below:  
\begin{Theorem}\label{th: existence and properity} 
Let $n\geqslant 1, s\in(0,1)$, and $p\in(0,2^*-2)$. Then the following holds:
	\begin{itemize}
		\item[(i)]\text{EXISTENCE}: There exists a nonnegative solution  $u_s\in H^1(\mathbb{R}^n)$ of equation \eqref{main equation}, which is a minimizer of the above functional  $J_s$.
		\item[(ii)] REGULARITY, SYMMETRY AND DECAY: If $u_s\in H^1(\mathbb{R}^n)$ with $u_s\geqslant 0$ and $u_s\not\equiv 0$ solves equation~\eqref{main equation}, then 
		\begin{itemize}
		\item there exists some $x_0\in\mathbb{R}^n$ such that $u_s(\cdot-x_0)$ is radial, positive, and decreasing in $|x-x_0|$. 
		\item Moreover, the function $u_s\in C^{2}(\mathbb{R}^n)\cap H^2(\mathbb{R}^n)$ and there exist two suitable constants $C_2\geqslant C_1>0$ such that 
		\[ \frac{C_1}{|x|^{n+2s}}\leqslant u_s(x)\leqslant  \frac{C_2}{|x|^{n+2s}} \quad \forall |x|>1. \]
		\end{itemize}
	\end{itemize}
\end{Theorem}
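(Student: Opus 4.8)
The plan is to establish the four assertions essentially independently, each for a fixed $s\in(0,1)$ (no uniform-in-$s$ control is needed here); this is the content of \cite{DSVZ24}, so I only sketch the strategy.

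\emph{Existence.} I would start from the characterization \eqref{lambda s}, pick a minimizing sequence $(v_k)\subset H^1(\mathbb{R}^n)$ with $\|v_k\|_{L^{p+2}(\mathbb{R}^n)}=1$ and $\int_{\mathbb{R}^n}(|\nabla v_k|^2+|(-\Delta)^{s/2}v_k|^2+|v_k|^2)\to\lambda_s$, and replace each $v_k$ by its symmetric decreasing rearrangement $v_k^{*}$: this leaves $\|v_k\|_{L^2}$ and $\|v_k\|_{L^{p+2}}$ unchanged and does not increase $\|\nabla v_k\|_{L^2}$ (P\'olya--Szeg\H{o}) nor $\|(-\Delta)^{s/2}v_k\|_{L^2}$ (the fractional P\'olya--Szeg\H{o} inequality), so we may assume the $v_k$ are radial, radially nonincreasing and, in particular, nonnegative. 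Being bounded in $H^1(\mathbb{R}^n)$, by the compactness of the radial Sobolev embedding into $L^{p+2}(\mathbb{R}^n)$ for $2<p+2<2^*$ (Strauss' lemma; for the excluded endpoint one invokes the concentration--compactness principle \cite{MR778974}), a subsequence converges in $L^{p+2}$ and weakly in $H^1$ to some $u\geqslant0$ with $u\not\equiv0$, which by weak lower semicontinuity attains $\lambda_s$. Its Euler--Lagrange equation reads $-\Delta u+(-\Delta)^s u+u=\lambda_s\,u^{p+1}$, so $u_s:=\lambda_s^{1/p}u$ is a nonnegative minimizer of $J_s$ solving \eqref{main equation}; this is assertion~(i).

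\emph{Regularity.} Given any $H^1$-solution $u_s\geqslant0$, I would rewrite \eqref{main equation} as $-\Delta u_s=u_s^{p+1}-u_s-(-\Delta)^s u_s$ and treat the fractional term as lower order: since $s<1$, $(-\Delta)^s$ sends $H^t$ into $H^{t-2s}$ with $t-2s>t-2$, so a standard bootstrap alternating Sobolev embeddings with Calder\'on--Zygmund estimates for $-\Delta$ gains regularity at each step and yields $u_s\in H^2(\mathbb{R}^n)\cap W^{2,q}_{\mathrm{loc}}(\mathbb{R}^n)$ for every $q<\infty$, hence $u_s\in C^{1,\alpha}_{\mathrm{loc}}(\mathbb{R}^n)$; one more iteration, now using interior H\"older estimates for $(-\Delta)^s$ on bounded sets, upgrades this to $u_s\in C^2(\mathbb{R}^n)$, and $u_s\in H^2(\mathbb{R}^n)$ also follows directly by testing the equation against $-\Delta u_s$.

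\emph{Positivity and symmetry.} Strict positivity of a nontrivial $u_s\geqslant0$ follows from the strong maximum principle for $-\Delta+(-\Delta)^s+1$: a zero of $u_s$ at an interior point forces, through the local Hopf-type argument together with the strictly negative nonlocal contribution $c_{n,s}\,\mathrm{PV}\!\int(u_s(x)-u_s(y))|x-y|^{-n-2s}\,dy$, that $u_s\equiv0$. Radial symmetry of an arbitrary positive solution then comes from the method of moving planes adapted to the mixed operator: using the decay bound of~(ii) to make the reflected difference $u_s^{(\lambda)}-u_s$ small near infinity and a maximum principle in antisymmetric form (the nonlocal kernel contributes a sign-definite term in the equation for the difference), one shows $u_s$ is symmetric and monotone about a hyperplane in every direction, so $u_s(\cdot-x_0)$ is radial and radially decreasing for some $x_0\in\mathbb{R}^n$.

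\emph{Decay and the main obstacle.} For $|x|\to\infty$ one has $u_s^{p+1}=o(u_s)$ and $\Delta u_s$ is negligible against the remaining terms, so \eqref{main equation} is governed by the balance $u_s\approx-(-\Delta)^s u_s$; since $u_s\in L^1(\mathbb{R}^n)$, one computes $(-\Delta)^s u_s(x)=-c_{n,s}\|u_s\|_{L^1(\mathbb{R}^n)}|x|^{-n-2s}(1+o(1))$, which pins the decay rate at $|x|^{-n-2s}$. Rigorously, the upper bound $u_s(x)\leqslant C_2|x|^{-n-2s}$ is produced first by a supersolution argument (which in particular secures $u_s\in L^1$), and the matching lower bound $u_s(x)\geqslant C_1|x|^{-n-2s}$ by a subsolution argument, both via the comparison principle for the mixed operator. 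The delicate point of the whole theorem is exactly this decay step: one must show that the algebraic tail $|x|^{-(n+2s)}$ is neither destroyed by the presence of $-\Delta$ (which alone would impose exponential decay) nor degraded by it, so the barriers must be chosen compatibly with all three terms $\Delta$, $(-\Delta)^s$ and the zeroth-order one, with careful control of the competing scales; the moving-plane step is also somewhat delicate because the mixed operator is not scale invariant, so Kelvin transforms are unavailable and one argues directly from the decay.
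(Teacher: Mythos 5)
Your proposal is correct and reaches the same conclusions, but the \emph{existence} step takes a genuinely different route from the paper. The paper establishes existence via Lions' concentration--compactness principle: it takes a nonnegative minimizing sequence, invokes the vanishing lemma to locate mass in balls $B_1(y_k)$, translates by $y_k$ to restore tightness, and uses the Brezis--Lieb lemma together with the strict subadditivity inequality $a^{2/(p+2)}+(1-a)^{2/(p+2)}\geqslant 1$ to show the weak limit attains the infimum. You instead pass to $|v_k|$, symmetrize via Schwarz rearrangement (using both the classical and the fractional P\'olya--Szeg\H{o} inequalities), and then exploit compactness of the radial $H^1\hookrightarrow L^{p+2}$ embedding for radially nonincreasing functions. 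Both are classical routes for this type of problem. The rearrangement approach has the pleasant feature that the minimizer is manifestly radial, radially decreasing and nonnegative from the start, at the cost of requiring the fractional P\'olya--Szeg\H{o} inequality; the concentration--compactness approach is more flexible (it does not rely on symmetrization and generalizes to non-autonomous or vectorial problems) but needs the subadditivity argument. One small caveat in your version: you should make explicit that for $n=1$ Strauss' lemma gives no pointwise decay for general radial $H^1$ functions, so the compactness really comes from the monotonicity of $v_k^{*}$ (which gives $|v_k^{*}(R)|\lesssim R^{-n/2}$ in all dimensions), not from the Strauss estimate per se.

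For the remaining items (regularity, positivity, symmetry, decay), the paper delegates almost everything to \cite{DSVZ24} and only supplies the $H^2$-regularity (Appendix Lemma via difference quotients) and the $n=1$ upgrade to $C^2$. Your sketch --- bootstrap using that $(-\Delta)^s$ is lower order, strong maximum principle for positivity, moving planes without Kelvin transform for symmetry, and two-sided barrier construction for the $|x|^{-(n+2s)}$ tail --- matches the content of \cite{DSVZ24} in spirit. Your identification of the decay step as the subtle one, and your heuristic $u_s\approx-(-\Delta)^s u_s$ with $(-\Delta)^s u_s(x)\approx -c_{n,s}\|u_s\|_{L^1}|x|^{-n-2s}$, is exactly right; the ordering (supersolution first, to secure $u_s\in L^1$, then subsolution) is also the correct one. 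For $H^2$-regularity your suggestion to test against $-\Delta u_s$ (or equivalently to work on the Fourier side) is actually cleaner than the paper's difference-quotient argument in Lemma~\ref{lemma H^2}, though both work.
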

For the reader's  convenience, we  present the  proof of Theorem~\ref{th: existence and properity} in Appendix~\ref{sec:Existence and Properties of Ground states }.

%These  assertions with respect to the regularity and decay of solutions follow from results in \cite{DSVZ24}.  For the reader's  convenience, we  present the  proof of Theorem~\ref{th: existence and properity} in Appendix~\ref{sec:Existence and Properties of Ground states }.

\smallskip

It would then be interesting to introduce  the following notion of ground states for  those positive solutions of \eqref{main equation} obtained in  Theorem~\ref{th: existence and properity}.
\begin{Definition}[Ground states]\label{ground states}
	We say that $u_s$ is a ground state of equation~\eqref{main equation} if $u_s\geqslant 0$ with $u_s\not\equiv 0$ and $u_s$ has the least energy, namely, ${J}_s(u_s)=\lambda_s $.
\end{Definition}

We remark that $u_s$ is a ground state of equation~\eqref{main equation} if and only if $u_s$ is a minimizer of functional $\mathcal{F}_s$ on $\mathcal{B}_s$. 
Let us denote the set of ground states of equation~\eqref{main equation} for $s\in(0,1]$ by 
\begin{equation}\label{m_s}
	\mathcal{M}_s:=
	\left\{\begin{array}{ll}
	u_s\in H^1(\mathbb{R}^n):   u_s \text{ solves equation } \eqref{main equation} \text{ with } u_s\geqslant 0, \text{ and } J_s(u_s)=\lambda_s
	\end{array}
	\right\}.
\end{equation}

Then, for any $s\in(0,1)$, we have  the following fundamental facts about $\mathcal{M}_s$.
\begin{remark}\label{reamrk 1.2}
\begin{itemize}
\item Let $u_s\in \mathcal{M}_s$, then $u_s^p\in K_s$, i.e., the potential $V =u_s^p$ belongs to the ``Kato-class" (Definition \ref{definition of Kato}) with respect to
$ - \Delta  +  (-\Delta)^s $, which follows from Lemma~\ref{lemma kato lp}. 
\item	 Let  $u_s\in \mathcal{M}_s$, then, from Theorem~\ref{th: existence and properity},  one can assume that $u_s=u_s(|x|)>0$ is radial and decreasing in $|x| $.
\end{itemize}
\end{remark}

For any $u_s\in\mathcal{M}_s$, we  denote  the corresponding linearized operator of $J_s$ at $u_s$ by
\begin{equation}\label{Ls}
	L_s= - \Delta  +  (-\Delta)^s +1 - (p+1)u_s^{p}
\end{equation}
acting on $L^2(\mathbb{R}^n)$. Note that
 $L_s$ is a self-adjoint operator on $L^2(\mathbb{R}^n)$ with quadratic-form domain $H^1(\mathbb{R}^n)$ and
 operator domain $H^2(\mathbb{R}^n)$.

%The following result is nondegeneracy of ground state $u_s$, that is, the derivatives of $u_s$ and their linear combinations exhaust $ \text{Ker}\,L_s$. 
Therefore, we obtain the following nondegenerate property of elements in $\mathcal{M}_s$.
\begin{Theorem}\label{th:nondegeneracy}
	Let $n\geqslant 1 $. There exist $s_0 , s_1\in(0,1)$ such that for every  $s\in(0,s_0)\cup (s_1,1)$,  if $u_s\in \mathcal{M}_s$, then
the linearized operator $L_s$ is nondegenerate; i.e., its kernel is exhausted by the derivatives of $u_s$ and their linear combinations:
	\[  \text{Ker}\,L_s=\text{span}\left\{\partial_{i} u_s, i=1,\cdots,n\right\}. \]
\end{Theorem}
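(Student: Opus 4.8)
The plan is to prove nondegeneracy by a perturbative/compactness argument, treating $s\to 0^+$ and $s\to 1^-$ as two limiting regimes in which the mixed operator $-\Delta+(-\Delta)^s+1$ degenerates (after the normalization in \eqref{c_n,s}) to the local operator $-\Delta+1$ (for $s\to 1^-$, up to a harmless rescaling of coefficients) or to $-2\Delta+1$-type local operators with a shifted potential (for $s\to 0^+$, where $(-\Delta)^s u\to u$ in the normalized sense). In either limit the ground states $u_s$ should converge, along subsequences, to the unique radial positive ground state $Q$ of the classical equation $-\Delta Q+Q=Q^{p+1}$ (or its rescaling), whose linearized operator $L_+ = -\Delta+1-(p+1)Q^p$ is known to be nondegenerate with kernel exactly $\mathrm{span}\{\partial_i Q\}$ by \cite{MR886933, MR969899, MR1201323}. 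Given this, the idea is: if nondegeneracy failed for a sequence $s_k\to 0$ (or $1$), one could extract normalized kernel elements $\phi_{s_k}\in\mathrm{Ker}\,L_{s_k}$, orthogonal to all $\partial_i u_{s_k}$, with $\|\phi_{s_k}\|_{L^2}=1$, and pass to the limit to obtain a nonzero element $\phi_\infty$ of $\mathrm{Ker}\,L_+$ that is orthogonal to $\mathrm{span}\{\partial_i Q\}$ --- a contradiction.

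The key steps, in order, would be the following. First, establish uniform-in-$s$ estimates on ground states $u_s$: uniform $H^1$ bounds (immediate from $J_s(u_s)=\lambda_s$ together with uniform bounds on $\lambda_s$, using \eqref{c_n,s} to control the fractional seminorm), then uniform $L^\infty$ and $C^2_{loc}$ bounds and uniform decay, leaning on the regularity theory of the \emph{local} leading operator $-\Delta$ as advertised in the introduction --- this is where one treats $(-\Delta)^s u_s$ as a lower-order forcing term that is uniformly controlled. Second, show compactness: along any sequence $s_k\to 0^+$ or $s_k\to 1^-$, $u_{s_k}\to Q$ (appropriately normalized/centered, using radial symmetry from Theorem~\ref{th: existence and properity} to prevent loss of mass to infinity), strongly in $H^1$ and in $C^2_{loc}$, where $Q$ is a ground state of the limiting local equation; invoke uniqueness of $Q$ up to translation. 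Third, prove convergence of the linearized operators: the quadratic forms $\langle L_{s_k}\phi,\phi\rangle$ converge to $\langle L_+\phi,\phi\rangle$ (again \eqref{c_n,s} makes the fractional form term vanish in the $s\to 0,1$ limits after normalization, and $u_{s_k}^p\to Q^p$ uniformly), with enough uniformity to pass kernel elements to the limit. Fourth, run the contradiction argument: normalized would-be kernel elements $\phi_{s_k}$ satisfy $\phi_{s_k} = (p+1)(-\Delta+(-\Delta)^s+1)^{-1}(u_{s_k}^p\phi_{s_k})$; the right-hand side is compact (the resolvent of the mixed operator maps $L^2$ to $H^2$, and multiplication by the decaying $u_{s_k}^p$ together with a tightness/decay estimate --- here the Kato-class property in Remark~\ref{reamrk 1.2} is useful --- recovers strong $L^2$ convergence), so $\phi_{s_k}\to\phi_\infty$ strongly with $\|\phi_\infty\|_{L^2}=1$, $L_+\phi_\infty=0$, and $\phi_\infty\perp\partial_i Q$, contradicting nondegeneracy of $L_+$.

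The main obstacle I expect is Step 2 together with the lower-order-term control in Step 1: making the convergence $u_s\to Q$ quantitative and uniform enough --- in particular ruling out that a nontrivial fraction of the mass or of the $H^1$-norm escapes to spatial infinity or concentrates, and controlling the fractional term $\|(-\Delta)^{s}u_s\|$ uniformly as $s\to 1^-$ where $(-\Delta)^s$ is ``almost'' as strong as $-\Delta$. The normalization constants $c_{n,s}\sim s(1-s)$ in \eqref{c_n,s} are the saving grace: they force $|(-\Delta)^{s/2}u_s|^2$ to contribute negligibly to $\mathcal F_s$ in both limits, so $\lambda_s\to\lambda_1$ (the local threshold) and the ground states are genuine near-minimizers of the local functional; but turning this energy-level statement into strong $H^1$ and $C^2_{loc}$ convergence requires the uniform decay estimate $u_s(x)\lesssim |x|^{-(n+2s)}$ from Theorem~\ref{th: existence and properity} to be genuinely uniform in $s$ away from the endpoints, which must be checked. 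A secondary technical point is the $s\to0^+$ case, where $(-\Delta)^s\to \mathrm{Id}$ (in the normalized sense) so the limiting equation is $-\Delta Q + 2Q = Q^{p+1}$ up to rescaling; one must track the correct limiting coefficients carefully but the structure of the nondegeneracy argument is unchanged since a rescaled $Q$ still has the classical nondegenerate linearization. Once uniform regularity and compactness are in hand, the contradiction argument itself is soft and standard.
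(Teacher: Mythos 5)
Your overall strategy (extract normalized kernel elements orthogonal to the translations, pass to the limit $s\to 0^+$ or $s\to 1^-$, and contradict the classical nondegeneracy of the limiting local linearization) is sound and is essentially the argument the paper uses --- but only for the \emph{radial sector}, in Lemmas~\ref{lemma nondegeneracy of radial sector} and~\ref{lemma nondegeneracy of radial sector s1}. The paper then treats the \emph{nonradial sector} by an entirely different, non-perturbative route that is valid for every $s\in(0,1)$: for $n=1$, Perron--Frobenius/heat-kernel arguments show that $\partial_r u_s$ is (up to scalar) the only kernel element in $L^2_{\rm odd}(\mathbb{R})$; for $n\geqslant 2$, the Caffarelli--Silvestre extension plus the spherical-harmonics decomposition plus the variational minimality of $u_s$ (Lemma~\ref{lemma vsdv}) forces all spherical modes of degree $k\geqslant 2$ of a kernel element to vanish and the degree-$1$ modes to be proportional to $\partial_r u_s$. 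You propose instead to run the compactness argument on the full kernel, imposing orthogonality $\phi_{s_k}\perp\partial_i u_{s_k}$; since the tightness mechanism in \eqref{cdvsdv}--\eqref{svddf} only exploits the uniform decay $u_{s_k}\lesssim|x|^{-n}$ of the ground state and a uniform $H^1$ bound, it does not use radiality of $\phi_{s_k}$, and the orthogonality passes to the limit via the $H^2$-convergence $u_{s_k}\to u_\gamma$ of Lemma~\ref{lemma convergence}. So your route would in fact close the argument near the endpoints and is more direct; the cost is that it yields nothing for intermediate $s$, whereas the paper's nonradial analysis is $s$-uniform (which is why the authors can state, for every $s$, the reduction to the radial sector).

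Two of your preparatory claims should be corrected. First, the normalization $c_{n,s}\sim s(1-s)$ in \eqref{c_n,s} is exactly what makes the Fourier symbol of $(-\Delta)^s$ equal to $|\xi|^{2s}$; it does \emph{not} make the fractional term ``contribute negligibly.'' As $s\to 1^-$ one has $(-\Delta)^s\to -\Delta$, so $L_s\to -2\Delta+1-(p+1)u_1^p$ where $u_1$ solves $-2\Delta u_1+u_1=u_1^{p+1}$; as $s\to 0^+$ one has $(-\Delta)^s\to \mathrm{Id}$, so $L_s\to -\Delta+2-(p+1)u_0^p$ where $u_0$ solves $-\Delta u_0+2u_0=u_0^{p+1}$ (you have the two endpoints swapped in your opening paragraph, though you later state the $s\to 0$ equation correctly). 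Both limits are rescalings of the standard ground-state equation so the nondegeneracy input \eqref{L1} applies, but it is worth getting the bookkeeping right because it is precisely the quantitative bound $\|(-\Delta)^s u_s-u_s\|_{L^2}\lesssim s$ (Lemma~\ref{Lemma gamma=0}) and its analogue near $s=1$ that drive the $H^2$-convergence. Second, the uniform decay you worry about in your last paragraph is not the $|x|^{-(n+2s)}$ rate from Theorem~\ref{th: existence and properity} (whose constants are not obviously $s$-uniform) but the weaker, genuinely $s$-uniform bound $u_s(x)\leqslant C|x|^{-n}$ of \eqref{dgsdbf}, obtained from radial monotonicity and the $L^1$ bound \eqref{fbe}; this is exactly enough for the tightness estimate, and you should use that rather than try to make Theorem~\ref{th: existence and properity} uniform.
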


 We remark that the above nondegeneracy result for space dimension $n=1$  is established by 
  following the strategy developed in~\cite{MR3070568}, i.e., 
 combining heat kernel with Perron-Frobenius arguments, while  
  the approach of proving the case $n\geqslant 2$ is based on  a local realization   and  the decomposition in terms of spherical harmonics.

Our next result is a uniqueness property.
\begin{Theorem}\label{th:uniqueness}
	Let $n\geqslant 1 $. There exist $s_0 , s_1\in(0,1)$ such that for every  $s\in(0,s_0)\cup (s_1,1)$,  the ground state $u_s\in H^1(\mathbb{R}^n)$ of equation~\eqref{main equation} is unique up to a translation.
\end{Theorem}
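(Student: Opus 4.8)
The plan is to deduce uniqueness from the nondegeneracy result (Theorem~\ref{th:nondegeneracy}) together with the existence and symmetry structure already available (Theorem~\ref{th: existence and properity}), following the classical continuation/implicit-function-theorem scheme that appears in \cite{MR3070568, MR3530361}. The idea is to fix $n\geqslant1$, $p\in(0,2^*-2)$ and view $s$ as a parameter. For $s=1$ (or $s=0$), equation~\eqref{main equation} degenerates to the classical/standard Schr\"odinger equation, whose ground state is known to be unique and nondegenerate by \cite{Coffman72, MR886933, MR969899, MR1201323}. The goal is to show that this uniqueness persists in a one-sided neighbourhood of the endpoint, i.e. for $s\in(s_1,1)$ and, by an analogous argument at the other endpoint, for $s\in(0,s_0)$.

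First I would set up the problem in the space $H^1_{\mathrm{rad}}(\mathbb{R}^n)$ of radial functions, which is legitimate because by Theorem~\ref{th: existence and properity}(ii) every nonnegative nontrivial solution is, after translation, radial and decreasing; hence uniqueness up to translation is equivalent to uniqueness among radial solutions. Next I would establish \emph{uniform a priori bounds}: using the uniform estimates announced in the introduction (the decay estimate $C_1|x|^{-(n+2s)}\leqslant u_s(x)\leqslant C_2|x|^{-(n+2s)}$, uniform $H^2$ and $C^2$ bounds, and the convergence \eqref{c_n,s} of the constants $c_{n,s}$), I would show that as $s\to1^-$ the family $\{u_s\}$ is precompact and any limit point is a radial ground state of the limiting classical equation $-2\Delta u+u=u^{p+1}$ (the factor coming from $(-\Delta)^s u\to -\Delta u$ as $s\to1$, which after rescaling is again the classical equation with a unique nondegenerate positive radial solution $Q$). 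Combined with the known uniqueness at $s=1$, this forces $u_s\to Q$ (up to the rescaling and translation) in, say, $H^1\cap L^\infty$ as $s\to1^-$, for \emph{every} choice of ground state $u_s\in\mathcal{M}_s$.

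Then I would run a continuity/implicit-function argument. Consider the map $F(u,s)= -\Delta u + (-\Delta)^s u + u - u^{p+1}$ acting on radial functions; by the regularity in Theorem~\ref{th: existence and properity} and the smooth dependence of $(-\Delta)^s$ on $s$ (via the explicit kernel and \eqref{c_n,s}) this is a $C^1$ map between suitable function spaces near $(Q,1)$, the linearization $D_u F(u_s,s)=L_s$ is exactly the operator \eqref{Ls}, and Theorem~\ref{th:nondegeneracy} tells us that, restricted to the radial sector, $L_s$ is invertible for $s$ near $1$ (the radial restriction kills the directions $\partial_i u_s$, which are not radial). The implicit function theorem then yields a \emph{locally unique} $C^1$ branch $s\mapsto u_s$ of radial solutions near $(Q,1)$. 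To upgrade this to \emph{global} uniqueness in $(s_1,1)$ one argues by contradiction: if for some sequence $s_k\to1^-$ there were two distinct radial ground states $u_{s_k}\ne v_{s_k}$, the precompactness step forces both to converge to $Q$, contradicting the local uniqueness from the IFT for $k$ large. Shrinking $s_1$ if necessary gives uniqueness on the whole interval $(s_1,1)$; the argument at $s=0$ is identical, using the other limit in \eqref{c_n,s} (where $(-\Delta)^s u\to u$, so the limiting equation is $-\Delta u + 2u = u^{p+1}$, again classical).

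\textbf{Main obstacle.} The delicate point is the compactness/limit analysis as $s\to1^-$ (resp. $s\to0^+$): one must show that no mass escapes to infinity and that the nonlocal term $(-\Delta)^s u_s$ genuinely converges to $-\Delta u$ (resp. to $u$) in an appropriate topology, \emph{uniformly} enough to pass to the limit in the equation and to match the linearized operators. This requires the $s$-independent decay and $H^2$ estimates, and care with the degeneracy $c_{n,s}\sim s(1-s)$ in \eqref{c_n,s}, which means the fractional term has a vanishing prefactor and the relevant scaling of the limiting equation must be tracked correctly. A secondary technical issue is verifying the $C^1$ dependence of $s\mapsto(-\Delta)^s$ near the endpoints in the function spaces where the IFT is applied; this is where the explicit integral representation of $c_{n,s}$ and \cite[Proposition~4.1]{MR2944369} are essential. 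Once these are in place, uniqueness follows from nondegeneracy by the standard scheme outlined above.
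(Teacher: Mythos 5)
Your plan coincides in essence with the paper's strategy: compactness (Lemma~\ref{lemma convergence}) forces every family of ground states to converge to the unique endpoint ground state $u_1$ (resp.\ $u_0$), and local uniqueness near the endpoint, obtained from nondegeneracy of the linearized operator, then rules out two distinct ground states for $s$ close to $1$ (resp.\ $0$). The paper implements the ``implicit-function-theorem'' step by hand: it introduces the pseudo-minimizer map $\Phi^\sigma_s(w)=\nabla\mathcal{I}_s(u_\sigma+w)$ on $H^1_{\rm rad}(\mathbb{R}^n)$ and proves in Lemmata~\ref{lemma busdhiv} and~\ref{lemma mlfbdfzb} (resp.\ Lemmata~\ref{lemma fvjhjjb} and~\ref{lemma bdfbfd}) that $\nabla\Phi^\sigma_s(0)$ is uniformly invertible and that $\Phi^\sigma_s(w)=0$ has a unique small radial solution for $s$ near $\sigma\in\{0,1\}$; any radial ground state must coincide with that fixed point, which is precisely your contradiction argument.

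Two corrections on the details are worth flagging. First, the ``degeneracy $c_{n,s}\sim s(1-s)$'' that you single out as the main obstacle is a red herring: the paper works entirely with the Fourier-multiplier description in which $(-\Delta)^s$ has symbol $|\xi|^{2s}$ with no prefactor, so the relevant statements are $|\xi|^{2s}\to|\xi|^2$ (resp.\ $1$) together with the Lipschitz bound $\|(-\Delta)^s u-(-\Delta)^\gamma u\|_{L^2}\leqslant C|s-\gamma|\,\|u\|_{2}$ from Lemma~\ref{Lemma gamma=0} and \cite[Lemma~2.4]{MR3207007}; the constant $c_{n,s}$ never enters the uniqueness proof. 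Second, for the IFT at $(Q,1)$ the operator you need to invert is $D_uF(Q,1)=L_1$, not $L_s$, and this distinction matters: the kernel triviality asserted in Theorem~\ref{th:nondegeneracy} is a \emph{qualitative} statement that does not by itself yield an $s$-uniform bound on the inverse of $L_s$ restricted to radial functions, which is what the contraction estimates (equivalently, a fixed-size IFT neighbourhood) actually require. The paper supplies exactly this input through the uniform coercivity estimate of Lemma~\ref{lemma cnidnc}, established by a separate compactness argument; your outline should be understood as resting on that lemma, not on Theorem~\ref{th:nondegeneracy} alone.
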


\begin{remark} Let $n\geqslant 1 $. Then, there exist $s_0 , s_1\in(0,1)$ such that for every  $s\in(0,s_0)\cup (s_1,1)$,
the minimizer of $\mathcal{F}_s$ on $\mathcal{B}_s$ is unique up to a translation, while the  minimizer of $J_s$ is unique up to a rescaling: $u_s(x)\mapsto au_s(x-b)$ for some $a>0, b\in\mathbb{R}^n$.
\end{remark}

%	Before concluding this section, from the above Theorems, it is important to note that,  the uniqueness result  is valid in an open interval close to $0$ and $1$, respectively, rather than a closed interval, thus, global continuity cannot be achieved in this paper.

On the other hand, we observe that, according to Proposition \ref{proposition A.1}, the ground state \( u_s \) has Morse index\footnote{The associated linearized operator $L_s$ has exactly one strictly negative eigenvalue (counting multiplicity). } equal to $1$. These observations lead to the following questions:

	\begin{Question}\label{question1}
	Is the uniqueness result of minimizers  available for any $s\in(0,1)$?
\end{Question}

	\begin{Question}\label{question}
	Is the uniqueness result available for solutions with Morse index equal to $1$ as in \cite{MR3530361} in the mixed setting?
\end{Question}

%Furthermore, is it possible to have   general uniqueness results for positive radial solutions of the Schr\"{o}dinger equation involving both Laplacian and fractional Laplacian for  the whole Euclidean space or the Dirichlet problem? 

%	We notice that, the uniqueness to all of $s\in(0,1)$ indeed follows  from  an implicit function argument and  a suitable continuation argument provided $L_s $ has trivial kernel on $L^2_{rad}(\mathbb{R}^n)$ for every $s\in(0,1)$.
	
%	 a key step in solving the Questions~\ref{question1} and~\ref{question} is how to establish an  appropriate continuation argument with respect
%	to the power $s\in(0,1)$ appearing  in $(-\Delta)^s$ in  the mixed setting.
	%caused by the fact that the mixed operator $-\Delta+(-\Delta)^s$ is not scale-invariant. 
	
	%the boundedness on $H^1$-norm of  positive solutions to equation~\eqref{main equation} with Morse index equal to $1$ is unknown.
%

%	Indeed,  by closely following the arguments in the proof of Theorems~\ref{th:nondegeneracy} and~\ref{th:uniqueness}, one can prove that  positive solutions  to equation~\eqref{main equation} with Morse index equal to  $1$ and bounded $H^1$-norm in $s$ are the ground states in the sense of  Definition~\ref{ground states}.

 This paper is organized as follows. In Section~\ref{sec:Preliminaries}, we introduce the working space and  several  uniform estimates and asymptotics of the solutions for equation~\eqref{main equation}.
 
 Next, Section~\ref{sec:Nondegeneracy} is devoted to establish the nondegeneracy result of ground states as stated in Theorem~\ref{th:nondegeneracy}. Finally, a uniqueness property of ground states described in Theorem~\ref{th:uniqueness} is shown in Section~\ref{sec:uniqueness}. 

In the appendix, we collect  several technical results and proofs needed in this paper.

\section{Preliminaries}\label{sec:Preliminaries}

As regards the functional framework, suppose that $s\in[0,1]$ and $n\geqslant 1$, we consider  here the classical Sobolev space
\begin{equation}
	H^1(\mathbb{R}^n):=\left\{u\in L^2(\mathbb{R}^n): \int_{\mathbb{R}^n}\left(1+|\xi|^2\right)|\hat{u}|^2\, d\xi<\infty \right\}
\end{equation}
and equip it  with an  equivalent norm 
\[ \|u\|_{s}^2:=\int_{\mathbb{R}^n}\left(1+|\xi|^2+|\xi|^{2s}\right)|\hat{u}|^2\, d\xi,\]
where, as usual, $\hat{u}$ is the Fourier transform of function $u$, namely,
$$\hat{u}(\xi):=\left(2\pi\right)^{-\frac{n}{2}}\int_{{\mathbb{R}^n}}e^{-i\xi\cdot x}u(x)\, dx.$$

Since the operator $-\Delta+(-\Delta)^s $  may be defined using Fourier transform as
\[ \widehat{(-\Delta)\varphi} (\xi)+\widehat{(-\Delta)^s\varphi} (\xi):=|\xi|^2\hat{\varphi}(\xi)+|\xi|^{2s}\hat{\varphi}(\xi) \quad \text{for } \xi\in\mathbb{R}^n, \] 
for every $\varphi\in C_c^\infty(\mathbb{R}^n)$, we are in a position to apply Plancherel's formula and adopt a weak notion of solution $u\in H^1(\mathbb{R}^n)$ for equation~\eqref{main equation} by the identity
\begin{equation}\label{weak solution}
	\int_{{\mathbb{R}^n}}\left(1+|\xi|^2+|\xi|^{2s}\right)\hat{u}\bar{\hat{\varphi}}\, d\xi=\int_{{\mathbb{R}^n}}u^{p+1}\varphi\, dx\quad \text{for any } \varphi\in H^1(\mathbb{R}^n).
\end{equation}

In addition, we define  the scalar product by
\begin{equation}\label{eq: inner product}
	\left\langle u,v \right\rangle_{s}:=\int_{\mathbb{R}^n}\left(1+|\xi|^2+|\xi|^{2s}\right)\hat{u}(\xi)\hat{v}(\xi)\, d\xi.
\end{equation}	
%A natural approach to construct a nonnegative nontrivial solution for the equation \eqref{main equation} is given by a  minimization problem of functional
%\begin{equation}\label{j}
%	J_s(u):=\frac{\|u\|_{s}^2}{\|u\|^2_{L^{p+2}(\mathbb{R}^n)}}.
%\end{equation}
%defined for $u\in H^1(\mathbb{R}^n)$. In other words, it actually attains the following greatest lower bound:
%\begin{equation}\label{lambda s}
%	\lambda_s:=\inf_{u\in H^1(\mathbb{R}^n)}J_s(u)=\inf_{u\in H^1(\mathbb{R}^n)
%		\atop
%		\|u\|_{L^{p+2}(\mathbb{R}^n)}=1} \|u\|_{s}^2. 
%\end{equation}  
% Moreover, from Theorem~\ref{th: existence and properity}, we can also conclude that if $u_s$ is the minimizer for $J_s$ with $\|u_s\|_{L^{p+2}(\mathbb{R}^n)}=1$, then it is a positive radial solution of  
%\begin{equation}\label{main equation2}
%	- \Delta u_s +  (-\Delta)^s u_s+u_s = \lambda_s|u_s|^{p}u_s \quad \hbox{in $\mathbb{R}^n$.}
%\end{equation}
%Up to scaling, the function $\lambda_s^{\frac{1}{p}}u_s$    is also a nonnegative minimizer for $J_s$ and solves  the following equation:
%\begin{equation}\label{s=1}
%	- \Delta u +  (-\Delta)^s u+u = |u|^{p}u \quad \hbox{in $\mathbb{R}^n$.}
%\end{equation}
 In particular, we notice that,   in both cases  $s=0$ and $1$,  
 the corresponding infima \begin{equation}\label{lambda1}
 	\begin{split}
 	\lambda_0&=\inf_{u\in H^1(\mathbb{R}^n)\backslash\left\{0\right\}}J_0(u)= 2^{1-\frac{n}{2}+\frac{n}{p+2}}\inf_{u\in H^1(\mathbb{R}^n)\backslash\left\{0\right\}}\frac{\int_{{\mathbb{R}^n}}\left(1+|\xi|^2\right)|\hat{u}|^2\, d\xi}{\|u\|^2_{L^{p+2}(\mathbb{R}^n)}},\\
 \text{and }	\qquad 	\lambda_1&=\inf_{u\in H^1(\mathbb{R}^n)\backslash\left\{0\right\}}J_1(u)= 2^{\frac{n}{2}-\frac{n}{p+2}}\inf_{u\in H^1(\mathbb{R}^n)\backslash\left\{0\right\}}\frac{\int_{{\mathbb{R}^n}}\left(1+|\xi|^2\right)|\hat{u}|^2\, d\xi}{\|u\|^2_{L^{p+2}(\mathbb{R}^n)}}.
 	\end{split}
\end{equation}
are bounded, where  the functionals $J_1$  and $J_0$  defined in~\eqref{j}.
 Also, for $i\in\left\{0,1\right\}$,  there exists a positive, radial and decreasing in $r=|x|$ minimizer $u_i$  in $H^1(\mathbb{R}^n)$ with $\|u_i\|_{L^{p+2}(\mathbb{R}^n)}=1$, namely,
$\lambda_i=J_i(u_i).$

% 
%    can be rewritten as  \begin{equation}\label{wfwe}
% 	J_1(u)=\frac{\|u\|^2_{1}}{\|u\|^2_{L^{p+2}(\mathbb{R}^n)}}
% 	=,
% \end{equation}
%{\color{red} and \begin{equation}\label{tehe}
%		J_0(u)=\frac{\|u\|^2_{0}}{\|u\|^2_{L^{p+2}(\mathbb{R}^n)}}
%		=2^{1-\frac{n}{2}+\frac{n}{p+2}}\frac{\int_{{\mathbb{R}^n}}\left(1+|\xi|^2\right)|\hat{u}|^2\, d\xi}{\|u\|^2_{L^{p+2}(\mathbb{R}^n)}}.
%\end{equation}}

%Furthermore, observing that $\lambda_1^{\frac{1}{p}}u_1$ solves \eqref{s=1} with $J_1(\lambda_1^{\frac{1}{p}}u_1)=\lambda_1$. 

 We now  provide several  uniform estimates and asymptotics  of the solutions of \eqref{main equation}  in $\mathcal{M}_s$.

\begin{Lemma}\label{lemma lambda1}
	Let $\lambda_1$ and $\lambda_0$ be defined in \eqref{lambda1}, we  then have 
 \begin{equation}\label{vdsfsvd}
 	\inf\limits_{s\in[0,1]}\inf\limits_{u_s\in \mathcal{M}_s}\|u_s\|^2_{s}> C_1(p,\lambda_1,\lambda_0) \quad \text{and} \quad \sup\limits_{s\in[0,1]}\sup\limits_{u_s\in \mathcal{M}_s}\|u_s\|^2_{s}<C_2(p,\lambda_1,\lambda_0).
 \end{equation}
\end{Lemma}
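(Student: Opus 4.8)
The plan is to reduce \eqref{vdsfsvd} to a uniform two-sided bound on the ground-state level $\lambda_s$, and then to obtain that bound by comparing the $s$-dependent norm $\|\cdot\|_s$ with the standard norm on $H^1(\mathbb{R}^n)$.

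\textbf{Reduction to $\lambda_s$.} Every $u_s\in\mathcal{M}_s$ solves \eqref{main equation}, so testing the weak formulation \eqref{weak solution} with $\varphi=u_s\in H^1(\mathbb{R}^n)$ gives the Nehari identity $\|u_s\|_s^2=\int_{\mathbb{R}^n}u_s^{p+2}\,dx$. Substituting this into the definition \eqref{j} of $J_s$ and using $J_s(u_s)=\lambda_s$ yields $\lambda_s=\big(\int_{\mathbb{R}^n}u_s^{p+2}\,dx\big)^{p/(p+2)}$, hence the exact identity
\[
\|u_s\|_s^2=\int_{\mathbb{R}^n}u_s^{p+2}\,dx=\lambda_s^{(p+2)/p}\qquad\text{for every }u_s\in\mathcal{M}_s.
\]
Thus \eqref{vdsfsvd} follows once we show $0<c\leqslant\lambda_s\leqslant C<\infty$ with $c,C$ independent of $s$.

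\textbf{Uniform bounds on $\lambda_s$.} For every $\xi\in\mathbb{R}^n$ and $s\in[0,1]$ one has the elementary inequality $0\leqslant|\xi|^{2s}\leqslant\max\{1,|\xi|^2\}\leqslant 1+|\xi|^2$, so that for all $v\in H^1(\mathbb{R}^n)$
\[
\int_{\mathbb{R}^n}\big(1+|\xi|^2\big)|\hat v|^2\,d\xi\ \leqslant\ \|v\|_s^2\ \leqslant\ 2\int_{\mathbb{R}^n}\big(1+|\xi|^2\big)|\hat v|^2\,d\xi.
\]
Dividing by $\|v\|_{L^{p+2}(\mathbb{R}^n)}^2$ and taking the infimum over $v\in H^1(\mathbb{R}^n)\setminus\{0\}$, with
\[
\Lambda:=\inf_{v\in H^1(\mathbb{R}^n)\setminus\{0\}}\frac{\int_{\mathbb{R}^n}\big(1+|\xi|^2\big)|\hat v|^2\,d\xi}{\|v\|_{L^{p+2}(\mathbb{R}^n)}^2},
\]
which is finite (take any nonzero test function) and strictly positive (since $0<p<2^*-2$ makes the embedding $H^1(\mathbb{R}^n)\hookrightarrow L^{p+2}(\mathbb{R}^n)$ continuous), I obtain $\Lambda\leqslant\lambda_s\leqslant 2\Lambda$ for every $s\in[0,1]$. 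Moreover, multiplying the two identities in \eqref{lambda1} gives $\lambda_0\lambda_1=2\Lambda^2$, i.e. $\Lambda=\sqrt{\lambda_0\lambda_1/2}$ depends only on $\lambda_0$ and $\lambda_1$. Combining with the reduction step, $\Lambda^{(p+2)/p}\leqslant\|u_s\|_s^2\leqslant(2\Lambda)^{(p+2)/p}$ uniformly in $s\in[0,1]$ and in $u_s\in\mathcal{M}_s$, so taking for instance $C_1=\tfrac12\Lambda^{(p+2)/p}$ and $C_2=2\,(2\Lambda)^{(p+2)/p}$ (both functions of $p,\lambda_0,\lambda_1$) gives the strict inequalities claimed.

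I do not anticipate a real obstacle: the argument is just the Nehari identity together with one scalar inequality. The only point needing a little care is that the comparison constants $1$ and $2$ above are genuinely uniform down to the endpoints $s=0$ and $s=1$, which is exactly why the bound $|\xi|^{2s}\leqslant 1+|\xi|^2$, valid for all $s\in[0,1]$, is the appropriate tool rather than any $s$-dependent interpolation.
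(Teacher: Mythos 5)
Your proof is correct and follows essentially the same approach as the paper: both reduce to the Nehari identity $\|u_s\|_s^2=\lambda_s^{(p+2)/p}$ and then bound $\lambda_s$ uniformly using the elementary comparison $|\xi|^{2s}\leqslant 1+|\xi|^2$ for $s\in[0,1]$. The only cosmetic difference is that you package the resulting constant as $\Lambda=\sqrt{\lambda_0\lambda_1/2}$ via the product of the two formulas in \eqref{lambda1}, whereas the paper writes the bounds directly in terms of $\lambda_1$.
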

\begin{proof} 
Assume that $s\in(0,1)$ and $u_s\in \mathcal{M}_s$ . From the definition of $\mathcal{M}_s$  and~\eqref{weak solution}, it follows that 
$\|u_s\|_{s}^2=\lambda_s^{1+\frac{2}{p}}$. Let $u_1\in \mathcal{M}_1 $, recalling~\eqref{lambda1} and the definition of $\lambda_s$,  one has that
	\begin{equation}
2^{-\frac{n}{2}+\frac{n}{p+2}}\lambda_1	\leqslant\lambda_s\leqslant 	\frac{\|u_1\|_s^2}{\|u_1\|^2_{L^{p+2}(\mathbb{R}^n)}}\leqslant 2 \frac{\int_{{\mathbb{R}^n}}\left(1+2|\xi|^2\right)|\widehat{u_1}|^2\, d\xi}{\|u_1\|^2_{L^{p+2}(\mathbb{R}^n)}}=2J_1(u_1)=2\lambda_1.
	\end{equation}
This implies the desired result~\eqref{vdsfsvd}.
\end{proof}

\begin{Lemma}\label{lemma uniform bound}
We have that
\begin{equation}\label{fbds}
	0<\Lambda_0:=\sup\limits_{s\in[0,1]} \sup\limits_{u_s\in \mathcal{M}_s}\|u_s\|_{L^\infty(\mathbb{R}^n)}< C_0,
\end{equation}
where $C_0$ only depends  on $n,p,\lambda_1$ and $\lambda_0$.
\end{Lemma}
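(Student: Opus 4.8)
The plan is to run a De Giorgi–Stampacchia / Moser-type iteration on the equation $-\Delta u_s + (-\Delta)^s u_s + u_s = u_s^{p+1}$ and to keep careful track of the $s$-dependence of every constant that enters, so that the final bound is uniform over $s\in[0,1]$. The crucial observations that make uniformity possible are: (i) by Lemma~\ref{lemma lambda1} we already have $\sup_{s}\sup_{u_s\in\mathcal M_s}\|u_s\|_s^2 < C_2(p,\lambda_1,\lambda_0)$, so in particular the $H^1$-norm and the $L^{p+2}$-norm of $u_s$ are bounded uniformly in $s$; and (ii) the fractional term is \emph{favourable} in the iteration. Indeed, for any truncation/test function of the form $\varphi = (u_s - k)_+^{\,\beta}$ (or $\varphi = w^{2\beta-1}$ with $w$ a truncated power of $u_s$), the bilinear form associated with $(-\Delta)^s$ is nonnegative when paired against $u_s$ in the relevant way — more precisely $\langle (-\Delta)^s u_s, \varphi\rangle \geqslant 0$ for these monotone test functions — so it can simply be dropped from the estimate, and one is left with the \emph{purely local} inequality
\[
\int_{\mathbb{R}^n} |\nabla (u_s-k)_+^{\,\beta}|^2\,dx \;\leqslant\; C\,\beta^2 \int_{\{u_s>k\}} u_s^{p}\,(u_s-k)_+^{2\beta-1}u_s\,dx,
\]
whose constants no longer see $s$ at all.

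From here the argument is the standard one for $-\Delta u + u = u^{p+1}$ with $p < 2^*-2$. First I would establish an a priori integrability gain: starting from $u_s \in H^1 \hookrightarrow L^{2^*}$ with norm controlled by $C_2$, a single Moser step using Sobolev's inequality (whose constant depends only on $n$) upgrades $u_s \in L^{q}$ to $u_s \in L^{q'}$ with $q' = q\cdot \tfrac{2^*}{2}$ adjusted for the nonlinearity, and since $p+2 < 2^* $ the right-hand side $u_s^{p}\cdot u_s \cdot(\dots)$ has the subcritical structure that keeps the iteration from stalling. Iterating finitely many times gives $\|u_s\|_{L^{q_0}} \leqslant C(n,p,\lambda_0,\lambda_1)$ for some $q_0 > n/2$ (large but finite), uniformly in $s$. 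With such a $q_0$ in hand, I would then run the De Giorgi level-set iteration on the local inequality above: defining $A_k = \{u_s > k\}$ and $U(k) = \int_{A_k}(u_s-k)^2$, Sobolev plus Hölder (using $u_s^p \in L^{q_0/p}$ with $q_0/p$ large) yields a nonlinear recursion $U(h) \leqslant \frac{C}{(h-k)^{\gamma}} U(k)^{1+\delta}$ for $h>k>0$ with $\delta>0$; the fast-geometric-convergence lemma (e.g. Lemma~4.1 in Stampacchia's notes, or \cite{MR2944369}-style arguments) then forces $U(k)=0$ for all $k \geqslant k_\infty$, where $k_\infty$ is estimated purely in terms of $C$, $\gamma$, $\delta$ and $U(0) = \|u_s\|_{L^2}^2 \leqslant C_2$. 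This gives $\|u_s\|_{L^\infty} \leqslant k_\infty \leqslant C_0(n,p,\lambda_0,\lambda_1)$, and the strict positivity $\Lambda_0>0$ is immediate since each $u_s$ is a nontrivial nonnegative solution (indeed $\|u_s\|_{L^\infty} \geqslant \|u_s\|_{L^{p+2}}/\|u_s\|_{L^{?}}^{?}$-type lower bounds, or simply: if $\Lambda_0$ were $0$ along a sequence, testing the equation against $u_s$ and using $\|u_s\|_s^2 > C_1$ from Lemma~\ref{lemma lambda1} gives a contradiction).

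The main obstacle — and the only place the argument differs from the classical $H^1$ case — is making sure the \emph{nonlocal} term is genuinely harmless in \emph{each} of the test-function choices used (the Moser powers $u_s^{2\beta-1}$ \emph{and} the De Giorgi truncations $(u_s-k)_+$), and that one never needs a fractional Sobolev/compactness step whose constant $c_{n,s}$ degenerates as $s\to 0^+$ or $s\to 1^-$. The inequality $\langle (-\Delta)^s u, g(u)\rangle \geqslant 0$ for nondecreasing $g$ with $g(0)\leqslant 0$ follows from the pointwise Stroock–Varopoulos / Córdoba–Córdoba convexity inequality — I would cite this (it holds with the constant $c_{n,s}$ in front, which only makes the dropped term \emph{more} nonnegative, so no degeneracy issue arises) — and then the whole iteration proceeds on the local part alone. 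One should also double-check the borderline endpoints $s=0$ (where $(-\Delta)^0 = \mathrm{Id}$ and the equation reads $-\Delta u + 2u = u^{p+1}$, rescaling to the standard ground state) and $s=1$ (where it reads $-2\Delta u + u = u^{p+1}$); in both limiting cases the bound is classical, so the supremum over the closed interval $[0,1]$ is finite.
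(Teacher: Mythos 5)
Your proposal is correct and follows essentially the same route as the paper's proof: both hinge on the observation that the fractional bilinear form is nonnegative when tested against the monotone truncations $Q_{s,k}^\alpha u_s$ (the paper argues this directly from $\varphi(x)\leqslant\varphi(y)$ whenever $u_s(x)\leqslant u_s(y)$, while you invoke Stroock--Varopoulos/C\'ordoba--C\'ordoba, which is the same fact), after which a purely local Moser/De Giorgi iteration with $s$-independent constants (Gagliardo--Nirenberg, Young) closes the argument, seeded by the uniform $H^1$ bound of Lemma~\ref{lemma lambda1}. The only cosmetic difference is that you propose a two-stage bootstrap (first raise integrability to some $L^{q_0}$ with $q_0>n/2$, then run a level-set iteration), whereas the paper iterates the Moser inequality directly from $\eta_0=2$ to $L^\infty$; and for strict positivity of $\Lambda_0$ you give a self-contained argument via testing the equation against $u_s$, while the paper simply bounds $\Lambda_0$ from below by $\sup_{u_1\in\mathcal M_1}\|u_1\|_{L^\infty}$ --- both are fine.
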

\begin{proof}
Note that to prove \eqref{fbds}, it suffices to show that $\Lambda_{0}$  is finite since
	\begin{equation}
	\Lambda_{0}\geqslant\sup\limits_{u_1\in\mathcal{M}_1}\|u_1\|_{L^\infty(\mathbb{R}^n)}>0.
	\end{equation} 
For this, we make use of  a standard Moser iteration argument. Indeed, assume that  $u_s\in \mathcal{M}_s$ satisfies 
\begin{equation}\label{u_s}
	-\Delta u_s+(-\Delta)^s u_s+u_s=u_s^{p+1}.
\end{equation}
For some $k>0$, let us denote
\begin{equation}
Q_{s,k}=\begin{cases}
	u_s \quad &\text{ if } u_s<k\\
	k &\text{ if } u_s\geqslant k.
\end{cases}
\end{equation}
We then observe that $D Q_{s,k}=0$ in $\left\{u_s\geqslant k\right\}$.
Set the text function 
\begin{equation}\label{sgs}
	\varphi:= Q_{s,k}^\alpha u_s\in H^1(\mathbb{R}^n)
\end{equation}
for suitable constant  $\alpha\geqslant 0$. 
Direct calculation yields that for every $ x, y\in\mathbb{R}^n$
\begin{equation}\label{sda}
	\varphi(x)\leqslant \varphi(y) \quad \text{if } u_s(x)\leqslant u_s(y), 
\end{equation}
\begin{equation}\label{varphi}
\text{ and }\quad 	D\varphi(x)= Q_{s,k}^\alpha(x) D u_s(x)+\alpha Q_{s,k}^{\alpha}(x) D Q_{s,k}(x).
\end{equation}

 We multiply the equation of~\eqref{u_s} by $\varphi$ and integrate over $\mathbb{R}^n$, one has that
 \begin{equation}\label{sfssds}
 	\int_{\mathbb{R}^n} \left(D {u_s} \cdot D\varphi+ u_s\cdot\varphi\right)\, dx +\int_{\mathbb{R}^{2n}}\frac{(u_s(x)-u_s(y))(\varphi(x)-\varphi(y))}{|x-y|^{n+2s}}\, dxdy
	=\int_{\mathbb{R}^n} {u_s}^{p+1} \varphi\, dx.
 \end{equation}
From~\eqref{sda}, we know that the second term on the left side of \eqref{sfssds} above is nonnegative. Thus, by combining~\eqref{varphi} with~\eqref{sfssds}, one deduces that
\begin{equation}
	\begin{split}\label{dsvs}
		&\int_{\mathbb{R}^n} Q_{s,k}^\alpha(x)|D {u_s}(x)|^2 \, dx+\int_{\mathbb{R}^n}  \alpha Q_{s,k}^{\alpha}(x) |D Q_{s,k}(x)|^2\, dx+\int_{\mathbb{R}^n} Q_{s,k}^\alpha(x) u_s^2(x)\,  dx\\
		\leqslant&\int_{\mathbb{R}^n} {u_s}^{p} Q_{s,k}^\alpha(x) u_s^2(x)\, dx.
	\end{split}
\end{equation}
Moreover, we denote $w_s:=Q_{s,k}^{\frac{\alpha}{2}}u_s$. Observing that
\[ |Dw_s|^2= Q_{s,k}^{{\alpha}} |Du_s|^2+\left(\alpha+\frac{\alpha^2}{4}\right) Q_{s,k}^{{\alpha}}|DQ_{s,k}|^2 \leqslant \left(1+\alpha\right)\left(Q_{s,k}^{{\alpha}} |Du_s|^2+\alpha Q_{s,k}^{{\alpha}}|DQ_{s,k}|^2\right).\]
As a consequence of this and \eqref{dsvs}, one has that 
\begin{equation}\label{sfdsfd}
		\int_{\mathbb{R}^n} |Dw_s|^2\, dx\leqslant (1+\alpha)\left(\int_{\mathbb{R}^n}Q_{s,k}^{{\alpha}} |Du_s|^2\, dx+\int_{\mathbb{R}^n}\alpha Q_{s,k}^{{\alpha}}|DQ_{s,k}|^2\, dx\right)\leqslant \left(1+\alpha\right)\int_{\mathbb{R}^n} {u_s}^{p} w_s^2\, dx.
\end{equation}
Owing to the H\"{o}lder inequality, we see that 
\begin{equation}\label{fvdfa}
	\int_{\mathbb{R}^n} {u_s}^{p} w_s^2\, dx\leqslant \|u_s\|^p_{L^{q_0}(\mathbb{R}^n)}\|w_s\|^2_{L^{\frac{2q_0}{q_0-p}}(\mathbb{R}^n)},
\end{equation}
 where $q_0$ is either equal to the critical exponent $2^*$ if $n > 2$ or any real number in $(p+2,+\infty)$
	if $n =1,2$.  Since $0<p<2^*-2$, one then has that  $q_0>\frac{2q_0}{q_0-p}>2$ for every $n\geqslant 1$.  By the Gagliardo-Nirenberg interpolation inequality and Young's inequality, one has that
\begin{equation}
	\begin{split}\label{bkhvh}
		\|w_s\|_{L^{\frac{2q_0}{q_0-p}}(\mathbb{R}^n)}
		&\leqslant C\|Dw_s\|^\frac{pn}{2q_0}_{L^{2}(\mathbb{R}^n)}\|w_s\|^{1-\frac{pn}{2q_0}}_{L^{2}(\mathbb{R}^n)}\\
%		&\leqslant C\left(\epsilon\|w_s\|_{L^{q_0}(\mathbb{R}^n)} +\epsilon^{-\frac{p}{q_0-2-p}}\|w_s\|_{L^{2}(\mathbb{R}^n)}\right)\\
		&\leqslant C\left(\epsilon\|Dw_s\|_{L^{2}(\mathbb{R}^n)} +\epsilon^{-\frac{pn}{2q_0-pn}}\|w_s\|_{L^{2}(\mathbb{R}^n)}\right)
	\end{split}
\end{equation}
where the positive constant $C$ only depends on  $n,q_0,p$.

Taking $\epsilon>0$ small enough, by combinning ~\eqref{sfdsfd}, \eqref{fvdfa} and
\eqref{bkhvh}, one obtains that
\begin{equation}\label{sdds}
	\int_{\mathbb{R}^n} |Dw_s|^2\, dx\leqslant C(1+\alpha)^{\frac{2q_0}{2q_0-pn}}\int_{\mathbb{R}^n}|w_s|^2\, dx
\end{equation}
where the positive constant $C$ only depends on ${n,q_0,\|u_s\|_{s}}$.

Using the Gagliardo-Nirenberg interpolation inequality again, from~\eqref{sdds}, it follows that 
\begin{equation}\label{chjv}
	\left(\int_{\mathbb{R}^n} |w_s|^{q_0}\, dx\right)^{\frac{2}{q_0}}\leqslant C(1+\alpha)^{\frac{2q_0}{2q_0-pn}}\int_{\mathbb{R}^n}|w_s|^2\, dx
\end{equation} 
for renaming $C$ which only depends on ${n,q_0,\|u_s\|_{s}}$. Furthermore, denote $\beta:=\frac{2q_0}{2q_0-pn}>1 $ and $\gamma:=\frac{q_0}{2}>1$, thus the inequality~\eqref{chjv} can be rewritten as
\begin{equation}\label{dsa}
	\left(\int_{\mathbb{R}^n} |w_s|^{2\gamma}\, dx\right)^{1/\gamma}\leqslant C(1+\alpha)^{\beta}\int_{\mathbb{R}^n}|w_s|^2\, dx.
\end{equation} 
From the definition of $w_s$, it is immediate to check that 
\begin{equation}\label{sdfsg}
	\left(\int_{\mathbb{R}^n} |Q_{s,k}|^{(\alpha+2)\gamma}\, dx\right)^{1/\gamma}\leqslant C(1+\alpha)^{\beta}\int_{\mathbb{R}^n}|u_s|^{\alpha+2}\, dx.
\end{equation} 
Moreover, set $\eta:=\alpha+2\geqslant 2$, we have that 
\begin{equation}\label{dvdsf}
	\left(\int_{\mathbb{R}^n} |Q_{s,k}|^{\eta\gamma}\, dx\right)^{1/\gamma}\leqslant C(\eta-1)^{\beta}\int_{\mathbb{R}^n}|u_s|^{\eta}\, dx,
\end{equation} 
provided the integral in the right hand side is bounded. Here the constant $C$ is independent of $\eta$ and $k$.  We are on a position of taking $k\rightarrow \infty$, obtaining that 
\begin{equation}\label{sdadas}
 \|u_s\|_{L^{\eta\gamma}(\mathbb{R}^n)}\leqslant C^{1/\eta}\eta^{\beta/\eta}\|u_s\|_{L^\eta(\mathbb{R}^n)},
\end{equation} 
provided $\|u_s\|_{L^{\eta}(\mathbb{R}^n)}$ is bounded. The above estimate allows us to iterate, beginning with $\eta=2$. Now set for $i=0,1,2,3,\cdots$, $\eta_0=2$ and $\eta_{i+1}= \eta_i\gamma$.

As a result, by iteration we have that
\begin{equation}\label{vjh}
	\|u_s\|_{L^{\eta_{i+1}}(\mathbb{R}^n)}\leqslant C^{\sum\frac{i}{\gamma^i}}\|u_s\|_{L^2(\mathbb{R}^n)},
\end{equation} 
where $C$  depends on $n,p,\|u_s\|_{s},\gamma,\beta$. Since $\gamma>1$, one has that $\sum_{i=0}^{\infty}\frac{i}{\gamma^i}$ is bound. Let $i\rightarrow \infty$, we conclude that 
\[ \|u_s\|_{L^\infty(\mathbb{R}^n)}\leqslant C\|u_s\|_{L^2(\mathbb{R}^n)}, \]
where $C$ only depends on $n,p,\|u_s\|_{s}.$ 
As a consequence of this and Lemma~\ref{lemma lambda1}, we obtain \eqref{fbds}, as desired.
\end{proof}

Let us denote  by  $\|u\|^2_{2}:=\int_{{\mathbb{R}^n}}\left(1+|\xi|^2+|\xi|^{4}\right)|\hat{u}|^2\, d\xi$ the $H^2$-norm  of every $u\in H^2(\mathbb{R}^n)$.

\begin{Lemma}\label{lemma 2 norm is bound}
 We have that 
\begin{equation}\label{H2 norm}
	\sup\limits_{s\in[0,1]} \sup\limits_{u_s\in \mathcal{M}_s}\|u_s\|_{2}< C_3,
\end{equation}
for some positive constant $C_3$ only depending on $p,\lambda_1,\lambda_0, n$.
\end{Lemma}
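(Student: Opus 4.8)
The plan is to bootstrap from the uniform $L^\infty$ bound of Lemma~\ref{lemma uniform bound} and the uniform $H^1$ bound of Lemma~\ref{lemma lambda1} to a uniform $H^2$ bound, using the equation~\eqref{main equation} together with Plancherel's theorem. The key point is that the $H^2$-norm is controlled by $\|(-\Delta)u_s\|_{L^2}$ plus lower order terms, and $(-\Delta)u_s$ can be read off from the equation.

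First I would rewrite equation~\eqref{main equation} in the form
\[
	-\Delta u_s = u_s^{p+1} - u_s - (-\Delta)^s u_s \qquad \text{in } \mathbb{R}^n,
\]
and estimate the $L^2$-norm of the right-hand side uniformly in $s\in(0,1)$. The term $u_s$ is controlled by $\|u_s\|_{L^2}\leqslant\|u_s\|_s$, hence by Lemma~\ref{lemma lambda1}. For the nonlinear term $u_s^{p+1}$, I would interpolate: $\|u_s^{p+1}\|_{L^2}^2=\int u_s^{2p+2}\leqslant \|u_s\|_{L^\infty}^{2p}\int u_s^{2}=\|u_s\|_{L^\infty}^{2p}\|u_s\|_{L^2}^2$, which is uniformly bounded by Lemma~\ref{lemma uniform bound} and Lemma~\ref{lemma lambda1}. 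For the fractional term I would use Plancherel: $\|(-\Delta)^s u_s\|_{L^2}^2=\int_{\mathbb{R}^n}|\xi|^{4s}|\widehat{u_s}|^2\,d\xi$. Since $|\xi|^{4s}\leqslant 1+|\xi|^4$ for all $\xi\in\mathbb{R}^n$ and all $s\in[0,1]$ (split into $|\xi|\leqslant 1$, where $|\xi|^{4s}\leqslant1$, and $|\xi|>1$, where $|\xi|^{4s}\leqslant|\xi|^4$), we get $\|(-\Delta)^s u_s\|_{L^2}^2\leqslant\|u_s\|_2^2$. This last bound is \emph{not} yet uniform since it involves the very quantity we want to control, so the estimate must be closed differently.

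The clean way to close it is to argue entirely on the Fourier side, avoiding any circularity. From the weak formulation~\eqref{weak solution}, $u_s\in H^1(\mathbb{R}^n)$ satisfies $(1+|\xi|^2+|\xi|^{2s})\widehat{u_s}(\xi)=\widehat{u_s^{p+1}}(\xi)$ for a.e.\ $\xi$, hence
\[
	(1+|\xi|^2)|\widehat{u_s}(\xi)| \leqslant (1+|\xi|^2+|\xi|^{2s})|\widehat{u_s}(\xi)| = |\widehat{u_s^{p+1}}(\xi)|.
\]
Squaring, integrating, and using $1+|\xi|^2+|\xi|^4\leqslant 3(1+|\xi|^2)^2$ gives
\[
	\|u_s\|_2^2 \leqslant 3\int_{\mathbb{R}^n}(1+|\xi|^2)^2|\widehat{u_s}|^2\,d\xi \leqslant 3\int_{\mathbb{R}^n}|\widehat{u_s^{p+1}}|^2\,d\xi = 3\|u_s^{p+1}\|_{L^2(\mathbb{R}^n)}^2,
\]
by Plancherel. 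Then, as above, $\|u_s^{p+1}\|_{L^2}^2\leqslant\|u_s\|_{L^\infty}^{2p}\|u_s\|_{L^2}^2\leqslant \Lambda_0^{2p}\,C_2(p,\lambda_1,\lambda_0)$, which is finite and depends only on $n,p,\lambda_1,\lambda_0$ by Lemmas~\ref{lemma uniform bound} and~\ref{lemma lambda1}. Taking the supremum over $s\in(0,1)$ and $u_s\in\mathcal{M}_s$, and noting that the endpoint cases $s\in\{0,1\}$ are covered identically (or by the same inequality with $|\xi|^{2s}$ replaced by $1$ or $|\xi|^2$), yields~\eqref{H2 norm} with $C_3 = \sqrt{3}\,\Lambda_0^{p}\sqrt{C_2(p,\lambda_1,\lambda_0)}$, which in turn depends only on $p,\lambda_1,\lambda_0,n$.

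The only mild subtlety — and the thing I would be most careful about — is making sure the chain of Fourier-side inequalities is legitimate, i.e.\ that $u_s^{p+1}\in L^2(\mathbb{R}^n)$ in the first place so that Plancherel applies and the pointwise identity on $\widehat{u_s}$ makes sense; this is guaranteed by the uniform $L^\infty$ bound combined with $u_s\in L^2$. Once that is in hand, everything else is the routine interpolation and elementary polynomial comparison on $\xi$ sketched above, and no regularity theory beyond Plancherel is needed.
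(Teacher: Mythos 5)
Your proof is correct and takes essentially the same route as the paper: both argue on the Fourier side, using the weak formulation $(1+|\xi|^2+|\xi|^{2s})\widehat{u_s}=\widehat{u_s^{p+1}}$ together with the uniform $L^\infty$ bound of Lemma~\ref{lemma uniform bound} and the uniform $H^1$ bound of Lemma~\ref{lemma lambda1} to control $\int(1+|\xi|^2)^2|\widehat{u_s}|^2\,d\xi$ by $\|u_s^{p+1}\|_{L^2}^2$. The paper phrases it with the shifted source $h_s=u_s^{p+1}-u_s$ and splits $\|u_s\|_2^2\leqslant\|u_s\|_s^2+\|h_s\|_{L^2}^2$, whereas you keep $u_s$ in the source and absorb the $+1$ directly, but this is a cosmetic difference.
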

\begin{proof}
	Let $u_s\in \mathcal{M}_s$ and $h_s:=u_s^{p+1}-u_s.$ By combining Lemmata~\ref{lemma lambda1} with~\ref{lemma uniform bound}, one has that 
	\begin{equation}\label{dfsvfv}
		\begin{split}
			\|h_s\|_{L^2(\mathbb{R}^n)}&\leqslant \|u_s\|_{L^2(\mathbb{R}^n)}+\|u_s^{p+1}\|_{L^2(\mathbb{R}^n)}\\
			&\leqslant \left(1+\|u_s\|^p_{L^\infty(\mathbb{R}^n)}\right)\|u_s\|_{L^2(\mathbb{R}^n)}\leqslant C(p,\lambda_1,n,\lambda_0).
		\end{split}
	\end{equation} 
 Moreover, since $u_s$ solves equation~\eqref{main equation}, we have that 
\begin{equation}\label{dfs}
	-\Delta u_s+(-\Delta)^s u_s=h_s.
\end{equation}
 In view of Theorem~\ref{th: existence and properity}, we know that $u_s\in C^2(\mathbb{R}^n)$. Thus, \eqref{dfs} holds pointwise. Owing to  \eqref{dfsvfv} and using Lemma~\ref{lemma lambda1} again,  one deduces that
 \begin{equation}
 \begin{split}
 	\|u_s\|_{2}^2&=\int_{{\mathbb{R}^n}}\left(1+|\xi|^2+|\xi|^4\right)|\widehat{u_s}|^2\, d\xi\\
 &\leqslant \|u_s\|^2_{s}+\int_{\mathbb{R}^{n}}|\xi|^4 |\widehat{u_s}|^2\, d\xi\leqslant \|u_s\|^2_{s}+\|h_s\|^2_{L^2(\mathbb{R}^n)}\leqslant C
 \end{split}
 \end{equation}
 for some positive constant $C$ only depending on  $p,\lambda_1,n,\lambda_0$, and the desired result~\eqref{H2 norm} plainly follows.
\end{proof}

%{\color{red}\begin{Lemma}\label{lemma uniform decay}
%	For all $s\in (0,1]$, we have that 
%	\begin{equation}
%		 \sup\limits_{u_s\in \mathcal{M}_s} u_s(|x|)< C_4 |x|^{-n}\qquad \text{for } |x|>1,
%	\end{equation}
%where the positive constant $C_4$ is independent of $s$.
%\end{Lemma}
%\begin{proof}
%Let $u_s\in\mathcal{M}_s$,	from Theorem~\ref{th: existence and properity},  it follows that $u_s(x)>0$ for every $x\in\mathbb{R}^n$ and $C|x|^{-n-2s}\leqslant u_s(x)\leqslant C^{-1}|x|^{-n-2s} $ for some constant $C$ depending on $s$. However, we here focus on obtaining the bounds that are uniform in $s$.
%
%
%\end{proof}
%}

\begin{Lemma}\label{Lemma gamma=0}
Let $d\in(0,1)$ and $s\in (0,d]$, then for every $u_s\in \mathcal{M}_s$
	\begin{equation}\label{bjkdf}
	\|(-\Delta)^{s} u_s- u_s\|_{L^2(\mathbb{R}^n)}\leqslant C|s|
\end{equation}
for a suitable constant $C>0$ independent of $s$.
\end{Lemma}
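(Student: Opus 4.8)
The plan is to estimate $\|(-\Delta)^s u_s - u_s\|_{L^2}$ using the Fourier representation, where $(-\Delta)^s$ acts as the multiplier $|\xi|^{2s}$, so that
\[
\|(-\Delta)^s u_s - u_s\|_{L^2(\mathbb{R}^n)}^2 = \int_{\mathbb{R}^n} \left( |\xi|^{2s} - 1 \right)^2 |\widehat{u_s}(\xi)|^2 \, d\xi .
\]
The elementary inequality to exploit is that $|\,|\xi|^{2s} - 1\,| = |e^{2s\log|\xi|} - 1| \leqslant 2s\,|\log|\xi||\,e^{2s|\log|\xi||} \leqslant C s\left( |\xi|^{2s} + |\xi|^{-2s} \right)|\log|\xi||$, and more crudely $|\,|\xi|^{2s}-1\,| \le Cs\bigl(1 + |\xi|^{2s}|\log|\xi||\bigr)$ when $|\xi|\ge 1$ while $|\,|\xi|^{2s}-1\,|\le 1$ always. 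Thus I would split the integral into the region $|\xi| \leqslant 1$ and $|\xi| > 1$. On $\{|\xi|\le 1\}$ one has $(|\xi|^{2s}-1)^2 \le 1$, but better: $|\xi|^{2s}-1 \to 0$ and in fact $|\,|\xi|^{2s}-1\,|\le 2s|\log|\xi||$, and $|\log|\xi||^2$ is integrable near the origin against a bounded density, so this contributes $\leqslant C s^2 \|u_s\|_{L^2}^2 \le Cs^2$ by Lemma~\ref{lemma lambda1}. On $\{|\xi|>1\}$, I would bound $(|\xi|^{2s}-1)^2 \leqslant C s^2 |\xi|^{4s}(\log|\xi|)^2 + Cs^2 \le Cs^2|\xi|^{4d}(1+|\xi|)^{\epsilon}$-type terms; since $s\le d<1$ we have $4s \le 4d < 4$, so $|\xi|^{4s}(\log|\xi|)^2 \leqslant C_\delta |\xi|^{4}$ for $|\xi|>1$, and hence this region contributes $\leqslant C s^2 \int_{|\xi|>1} |\xi|^4 |\widehat{u_s}|^2 \, d\xi \leqslant C s^2 \|u_s\|_2^2 \leqslant C s^2$ by Lemma~\ref{lemma 2 norm is bound}. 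Combining the two regions and taking square roots gives \eqref{bjkdf}.

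More carefully, the clean way to organize the $|\xi|>1$ estimate is: for $|\xi| > 1$ and $0 < s \le d$,
\[
\bigl| |\xi|^{2s} - 1 \bigr| = \int_0^{2s} |\xi|^t \log|\xi| \, dt \leqslant 2s\, |\xi|^{2s}\log|\xi| \leqslant 2s\, |\xi|^{2d}\log|\xi|,
\]
and since $t \mapsto t^{2d}\log t$ satisfies $t^{2d}\log t \leqslant C(d)\, t^{2}$ for all $t \ge 1$ (because $2d < 2$), we get $(|\xi|^{2s}-1)^2 \le C(d)\, s^2 |\xi|^4$ on $\{|\xi|>1\}$. For $|\xi|\le 1$, $\bigl||\xi|^{2s}-1\bigr| = \bigl|1 - e^{2s\log|\xi|}\bigr| \le 2s\,|\log|\xi||$ (as the integrand $|\xi|^t$ is bounded by $1$ there), and $\int_{|\xi|\le 1}|\log|\xi||^2 d\xi < \infty$ combined with $\|\widehat{u_s}\|_{L^\infty} \le \|u_s\|_{L^1}$ — or more simply just using $(|\xi|^{2s}-1)^2 \le 1$ together with a separate argument that the low-frequency $L^2$ mass is controlled — yields the bound $\le C s^2$. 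The constant $C$ depends on $d$, $n$, $p$, $\lambda_0$, $\lambda_1$ through Lemmata~\ref{lemma lambda1} and~\ref{lemma 2 norm is bound}, but not on $s$, as required.

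The main obstacle, and the only place requiring genuine care, is the uniformity of the constant in $s$: one must ensure the $H^2$ bound of Lemma~\ref{lemma 2 norm is bound} and the $H^1$ bound of Lemma~\ref{lemma lambda1} are invoked with constants independent of $s \in (0,d]$ — which they are, by hypothesis — and that the elementary bound $t^{2d}\log t \le C(d) t^2$ for $t\ge1$ uses $d<1$ strictly, so that the power $|\xi|^4$ (and not something larger) appears and matches the available $H^2$ control. The constraint $s\le d<1$ is exactly what prevents the exponent $4s$ from reaching or exceeding $4$; for $s$ near $1$ a different argument (against $|\xi|^2$ weights, i.e. the $H^1$ norm, comparing $(-\Delta)^s$ with $-\Delta$) would be needed, but that is not claimed here. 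A minor point is the treatment of the logarithmic singularity at $\xi=0$ in the low-frequency piece; this is harmless since $|\log|\xi||^2$ is locally integrable in every dimension $n\ge1$, or can be bypassed entirely by the trivial bound $(|\xi|^{2s}-1)^2\le 1$ on a fixed ball together with, say, monotone convergence considerations, at the cost of a slightly less transparent constant.
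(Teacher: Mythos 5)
The high-frequency half of your argument is correct and is essentially the paper's: on $\{|\xi|>1\}$ you write $|\,|\xi|^{2s}-1\,|\leqslant 2s\,|\xi|^{2s}\log|\xi|\leqslant 2s\,|\xi|^{2d}\log|\xi|$, absorb the logarithm into $|\xi|^{2(1-d)}$ at the cost of a constant $C(d)$ (the paper's version is $(1-d)^{-2}$), and finish with the uniform $H^2$ bound of Lemma~\ref{lemma 2 norm is bound}.

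However, your low-frequency estimate has a genuine gap. You correctly reduce that region to
\[
\int_{|\xi|\leqslant 1}\bigl(|\xi|^{2s}-1\bigr)^2|\widehat{u_s}|^2\,d\xi
\;\leqslant\;
4s^2\,\|\widehat{u_s}\|_{L^\infty(\mathbb{R}^n)}^2\int_{|\xi|\leqslant 1}|\log|\xi||^2\,d\xi ,
\]
but then quietly invoke $\|\widehat{u_s}\|_{L^\infty}\leqslant\|u_s\|_{L^1}$ and claim the resulting constant is controlled ``through Lemmata~\ref{lemma lambda1} and~\ref{lemma 2 norm is bound}.'' Those two lemmata give uniform $H^1$ and $H^2$ control, which does \emph{not} imply a uniform-in-$s$ bound on $\|u_s\|_{L^1(\mathbb{R}^n)}$: neither space embeds into $L^1$. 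One also cannot derive it from the decay estimate of Theorem~\ref{th: existence and properity}, since the constants there are not tracked in $s$ and the tail $\int_{|x|>1}|x|^{-n-2s}\,dx\sim s^{-1}$ degenerates as $s\to0$. The paper closes this gap with a dedicated claim inside the proof (the inequality \eqref{fbe}): integrating the equation, using that $\int_{\mathbb{R}^n}(-\Delta)^s u_s\,dx=0=\int_{\mathbb{R}^n}(-\Delta)u_s\,dx$ (the latter via the decay of $\nabla u_s$), one gets $\int u_s=\int u_s^{p+1}$, and a H\"older step then yields $\|u_s\|_{L^1}\leqslant\|u_s\|_s^2$, which \emph{is} uniformly bounded by Lemma~\ref{lemma lambda1}. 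Your proof needs this (or an equivalent) ingredient; without it, the constant in the low-frequency bound is not known to be $s$-independent. Your fallback suggestion of using $\bigl(|\xi|^{2s}-1\bigr)^2\leqslant1$ does not work either, since the trivial bound discards the crucial $s^2$ factor.
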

\begin{proof} We observe that 
	\begin{equation}\label{bjk}
		\begin{split}
			\int_{\mathbb{R}^{n}}\left|(-\Delta)^{s} u_s(x)- u_s\right|^2\, dx&= \int_{|\xi|\geqslant 1}\left||\xi|^{2s}-1\right|^2|\hat{u}_s|^2\, d\xi+ \int_{|\xi|< 1}\left||\xi|^{2s}-1\right|^2|\hat{u}_s|^2\, d\xi\\
			& =\int_{|\xi|\geqslant 1}\left|e^{2s\ln|\xi|}-1\right|^2|\hat{u}_s|^2\, d\xi+ \int_{|\xi|< 1}\left|1-e^{-2s\ln\frac{1}{|\xi|}}\right|^2|\hat{u}_s|^2\, d\xi\\
			&\leqslant 4s^2\int_{|\xi|\geqslant 1}(|\xi|^{2s}\ln|\xi|)^2\,|\hat{u}_s|^2\, d\xi+4s^2 \int_{|\xi|< 1}\left(\ln\frac{1}{|\xi|}\right)^2|\hat{u}_s|^2\, d\xi\\
			&\leqslant 4s^2(1-d)^{-2}\,\|u_s\|^2_{2}+4s^2\int_{|\xi|< 1}{16e^{-2}|\xi|^{-1/2}}|\hat{u}_s|^2\, d\xi\\
			&\leqslant 4s^2\left((1-d)^{-2}\|u_s\|^2_{2}+16e^{-2}c_n\|\hat{u}_s\|^2_{L^\infty(\mathbb{R}^n)}\right)\\
			&\leqslant 4s^2\left((1-d)^{-2}\|u_s\|^2_{2}+16e^{-2}c_n\|{u}_s\|^2_{L^1(\mathbb{R}^n)}\right).
		\end{split}
	\end{equation}
We now  claim that \begin{equation}\label{fbe}
	\sup\limits_{s\in[0,1]}\|u_s\|_{L^1(\mathbb{R}^n)}\leqslant \|u_s\|_{s}^2.
\end{equation}
Indeed, for every $s\in(0,1)$, recall that $u_s\in L^1(\mathbb{R}^n)\cap C^2(\mathbb{R}^n)$ holds by Theorem~\ref{th: existence and properity}.  Integrating equation~\eqref{main equation}, and taking into account the fact that $\int_{{\mathbb{R}^{n}}}(-\Delta)^su_s=0$, we  obtain that
\begin{equation}\label{vdsvf}
	\int_{{\mathbb{R}^{n}}}-\Delta u_s+\int_{{\mathbb{R}^{n}}}u_s=\int_{{\mathbb{R}^{n}}}u_s^{p+1},
\end{equation}
which implies that  $-\Delta u_s\in L^1(\mathbb{R}^n) $. Moreover, we notice that, when $n\geqslant 2$, 
\begin{equation}\label{dfws}
	\begin{split}
		\int_{{\mathbb{R}^{n}}}-\Delta u_s&=\int_{S^{n-1}}\int_{0}^{\infty}r^{n-1} \left(-\partial_{rr}u_s(r)-\frac{n-1}{r}\partial_ru_s(r)\right)\, dr\, d\sigma\\
		&=-\omega_{n-1}\int_{0}^{\infty} \partial_r\left(r^{n-1}\partial_{r}u_s\right)\, dr=0,
	\end{split}
\end{equation}
thanks to the decay of $\nabla u_s$ at infinity (see for instance Lemma~\ref{lemma uniform decay of spectrum}). Similarly, it is immediate to check that, $\int_{{\mathbb{R}}}-\Delta u_s=0$ when $n=1$, thanks to the fact that $\partial_ru_s$ is odd. As a consequence of this, using the  H\"{o}lder inequality and~\eqref{vdsvf},  obtaining that 
\begin{equation}\label{vdsvfd}
\int_{{\mathbb{R}^{n}}}u_s=\int_{{\mathbb{R}^{n}}}u_s^{p+1}\leqslant \left(\int_{{\mathbb{R}^{n}}}u_s^{p+2}\right)^{\frac{p}{p+1}}\, \left(\int_{{\mathbb{R}^{n}}}u_s\right)^{\frac{1}{p+1}}=\|u_s\|_{s}^{\frac{2p}{p+1}}\, \left(\int_{{\mathbb{R}^{n}}}u_s\right)^{\frac{1}{p+1}}.
\end{equation}
This and Lemma~\ref{lemma lambda1} yield the claim~\eqref{fbe}. Thus,   by combining~\eqref{bjk} with~\eqref{fbe}, in virtue of Lemma~\ref{lemma 2 norm is bound}, we conclude the desired result~\eqref{bjkdf}.
\end{proof}

\begin{Lemma}\label{lemma convergence}
Fix $\gamma\in[0,1]$. Let $s_k\in(0,1)$ be such that $s_k\rightarrow \gamma$. Let $u_{s_k}\in\mathcal{M}_{s_k}$.
	Then, there exist $u_{\gamma}\in\mathcal{M}_{\gamma}$ and a  subsequence (still denoted by $s_k$) such that
	\begin{equation}
		\|u_{s_k}-u_\gamma\|_{2}\rightarrow 0 \quad \text{as } k\rightarrow \infty.
	\end{equation}
\end{Lemma}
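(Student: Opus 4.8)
The plan is a concentration--compactness argument built on the uniform bounds already proven, with the radial monotone structure of ground states used to prevent loss of $L^{p+2}$-mass at infinity, and a Fourier-multiplier computation used to upgrade weak convergence to strong convergence in $H^2$. First I would normalize: by Theorem~\ref{th: existence and properity}(ii) we may translate each $u_{s_k}$ so that it is radial, positive, and nonincreasing in $|x|$. Since Lemma~\ref{lemma 2 norm is bound} makes $\{u_{s_k}\}$ bounded in $H^2(\mathbb{R}^n)$, after passing to a subsequence $u_{s_k}\rightharpoonup u_\gamma$ weakly in $H^2(\mathbb{R}^n)$, and after a further subsequence (Rellich on balls plus a diagonal argument, or Helly's theorem applied to the radial profiles) $u_{s_k}\to u_\gamma$ pointwise a.e.; thus $u_\gamma\geqslant0$ is radial and nonincreasing and $u_\gamma\in H^2(\mathbb{R}^n)$. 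A radial nonincreasing $u$ with $\|u\|_{L^2(\mathbb{R}^n)}\leqslant M$ obeys $u(x)\leqslant C(n)M|x|^{-n/2}$, so together with $\|u_{s_k}\|_{L^\infty(\mathbb{R}^n)}\leqslant\Lambda_0$ (Lemma~\ref{lemma uniform bound}) and the uniform bound of Lemma~\ref{lemma lambda1}, every $u_{s_k}$ is dominated by the fixed function $\Psi(x):=\min\{\Lambda_0,\,C|x|^{-n/2}\}$; since $p>0$ one checks $\Psi\in L^{p+2}(\mathbb{R}^n)$ and $\Psi^{p+1}\in L^2(\mathbb{R}^n)$, so dominated convergence gives
\[ u_{s_k}\to u_\gamma \ \ \text{in } L^{p+2}(\mathbb{R}^n) \qquad\text{and}\qquad u_{s_k}^{p+1}\to u_\gamma^{p+1}\ \ \text{in } L^2(\mathbb{R}^n). \]
Moreover $\int_{\mathbb{R}^n}u_{s_k}^{p+2}\,dx=\|u_{s_k}\|_{s_k}^2=\lambda_{s_k}^{1+2/p}$ is bounded below by a positive constant uniformly in $k$ (Lemma~\ref{lemma lambda1}), so $u_\gamma\not\equiv0$.

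Next I would pass to the limit in the weak formulation~\eqref{weak solution} for $u_{s_k}$ tested against an arbitrary $\varphi\in C_c^\infty(\mathbb{R}^n)$: the right-hand side $\int u_{s_k}^{p+1}\varphi\,dx$ converges by the $L^2$-convergence above, while on the left I split $|\xi|^{2s_k}=|\xi|^{2\gamma}+(|\xi|^{2s_k}-|\xi|^{2\gamma})$, the $(1+|\xi|^2+|\xi|^{2\gamma})$-part passing to the limit by weak $H^2$-convergence and the remainder being controlled by $\|u_{s_k}\|_2\,\bigl\|(|\xi|^{2s_k}-|\xi|^{2\gamma})\hat\varphi\bigr\|_{L^2}\to0$ via Cauchy--Schwarz, the uniform $H^2$-bound, and dominated convergence using $|\,|\xi|^{2s_k}-|\xi|^{2\gamma}\,|\,|\hat\varphi|\leqslant2(1+|\xi|^2)|\hat\varphi|$ together with pointwise convergence of the symbols (valid also for $\gamma\in\{0,1\}$). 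This shows that in Fourier variables $\widehat{u_\gamma}=m_\gamma\,\widehat{u_\gamma^{\,p+1}}$ with $m_\sigma(\xi):=(1+|\xi|^2+|\xi|^{2\sigma})^{-1}$, i.e.\ $u_\gamma$ is a nonnegative $H^2$-solution of $-\Delta u+(-\Delta)^\gamma u+u=u^{p+1}$.

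To upgrade to strong $H^2$-convergence I would use that $\widehat{u_{s_k}}=m_{s_k}\widehat{u_{s_k}^{\,p+1}}$, so
\[ \|u_{s_k}\|_2^2=\int_{\mathbb{R}^n}\bigl(1+|\xi|^2+|\xi|^4\bigr)\,m_{s_k}(\xi)^2\,\bigl|\widehat{u_{s_k}^{\,p+1}}(\xi)\bigr|^2\,d\xi , \]
and the elementary bound $(1+|\xi|^2+|\xi|^4)\,m_\sigma(\xi)^2\leqslant1$ for all $\sigma\in[0,1]$ (since $(1+|\xi|^2+|\xi|^{2\sigma})^2\geqslant(1+|\xi|^2)^2\geqslant1+|\xi|^2+|\xi|^4$): writing $|\widehat{u_{s_k}^{\,p+1}}|^2=|\widehat{u_\gamma^{\,p+1}}|^2+(|\widehat{u_{s_k}^{\,p+1}}|^2-|\widehat{u_\gamma^{\,p+1}}|^2)$, the error term is $O\bigl(\|\widehat{u_{s_k}^{\,p+1}}-\widehat{u_\gamma^{\,p+1}}\|_{L^2}\bigr)\to0$ and the main term tends, by dominated convergence and the identity $\widehat{u_\gamma}=m_\gamma\widehat{u_\gamma^{\,p+1}}$, to $\|u_\gamma\|_2^2$. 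Hence $\|u_{s_k}\|_2\to\|u_\gamma\|_2$, which with $u_{s_k}\rightharpoonup u_\gamma$ in the Hilbert space $H^2(\mathbb{R}^n)$ yields $\|u_{s_k}-u_\gamma\|_2\to0$. Finally, from this strong convergence (and $|\xi|^{s_k}\leqslant1+|\xi|$ plus dominated convergence for $(|\xi|^{s_k}-|\xi|^{\gamma})\widehat{u_\gamma}$) one gets $\|u_{s_k}\|_{s_k}^2\to\|u_\gamma\|_\gamma^2$, hence $\lambda_{s_k}=J_{s_k}(u_{s_k})\to J_\gamma(u_\gamma)$; testing the infimum~\eqref{lambda s} of $J_{s_k}$ against a fixed $w\in H^1(\mathbb{R}^n)\setminus\{0\}$ and letting $k\to\infty$ gives $\limsup_k\lambda_{s_k}\leqslant\lambda_\gamma$, so $J_\gamma(u_\gamma)\leqslant\lambda_\gamma\leqslant J_\gamma(u_\gamma)$ and $u_\gamma$ is a minimizer; being a nonnegative solution it lies in $\mathcal{M}_\gamma$, which proves the lemma.

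The step I expect to be the main obstacle is the first one: ruling out loss of $L^{p+2}$-mass at infinity for the sequence on the whole of $\mathbb{R}^n$, which is exactly where the radial monotonicity and the resulting uniform pointwise decay (hence the common $L^{p+2}$- and $L^2$-majorant $\Psi$, with constants uniform in $k$ thanks to Lemmas~\ref{lemma lambda1} and~\ref{lemma uniform bound}) are indispensable; once compactness in $L^{p+2}(\mathbb{R}^n)$ is secured, handling the $s$-dependent symbol $|\xi|^{2s_k}$, including the endpoints $\gamma\in\{0,1\}$, is routine dominated-convergence bookkeeping.
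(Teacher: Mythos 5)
Your argument is correct, and in several places it takes a genuinely different and arguably cleaner route than the paper's proof. The paper obtains the key pointwise decay $u_{s_k}(x)\lesssim|x|^{-n}$ from the uniform $L^1$ bound~\eqref{fbe} (which requires integrating the equation and verifying $\int_{\mathbb{R}^n}(-\Delta)u_s=\int_{\mathbb{R}^n}(-\Delta)^s u_s=0$), whereas you only invoke the weaker Strauss-type bound $u_{s_k}(x)\lesssim|x|^{-n/2}$ coming directly from the uniform $L^2$ control of Lemma~\ref{lemma lambda1}; since $p>0$ this weaker decay still furnishes a single $L^{p+2}$- and $L^{2(p+1)}$-majorant, so you get $L^{p+2}$ and $L^2$ convergence of $u_{s_k}$ and $u_{s_k}^{p+1}$ by dominated convergence alone, avoiding the paper's tail-cutoff plus Fatou bookkeeping. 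More significantly, your $H^2$ step is different: by writing $\widehat{u_{s_k}}=m_{s_k}\widehat{u_{s_k}^{\,p+1}}$ and using the uniform multiplier bound $(1+|\xi|^2+|\xi|^4)m_\sigma(\xi)^2\leqslant 1$, you pass directly from $L^2$ convergence of the nonlinearity to $\|u_{s_k}\|_2\to\|u_\gamma\|_2$, and then to strong $H^2$ convergence via weak convergence in the Hilbert space $H^2$; this bypasses the paper's intermediate $H^1$ step (Step 2) and its reliance on the auxiliary symbol estimates \cite[Lemma~2.4]{MR3207007} and Lemma~\ref{Lemma gamma=0}, and treats $\gamma\in\{0,1\}$ and $\gamma\in(0,1)$ uniformly. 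Finally, for $u_\gamma\in\mathcal{M}_\gamma$ the paper proves a Lipschitz estimate $|\lambda_{\bar s}-\lambda_s|\lesssim|\bar s-s|$ (with a separate argument at $\gamma=0$), while you only need the soft upper-semicontinuity $\limsup_k\lambda_{s_k}\leqslant\lambda_\gamma$ from testing against a fixed $w$, combined with $J_{s_k}(u_{s_k})\to J_\gamma(u_\gamma)$ which falls out of the strong convergence you already have; this is lighter, at the cost of not yielding the quantitative rate the paper records in~\eqref{vsdfvsdfds}. One minor presentational point: where you split the $L^{p+2}$ argument, you should state explicitly (as you implicitly do) that the pointwise dominator $\Psi$ is independent of $k$ precisely because Lemmas~\ref{lemma lambda1} and~\ref{lemma uniform bound} give $s$-uniform $L^2$ and $L^\infty$ bounds and the $u_{s_k}$ have all been recentred at the origin.
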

\begin{proof}
From Remark~\ref{reamrk 1.2},  it follows that $u_{s_k}=u_{s_k}(|x|)>0$ is radial and decreasing in $|x| $. From this, we see that for any $R>0$,
	\begin{equation}
		\|u_{s_k}\|_{L^1(\mathbb{R}^n)}\geqslant \int_{|x|\leq R}|u_{s_k}|\, dx\geqslant C_{n} R^{n}u_{s_k}(R).
			\end{equation}
	Owing to~\eqref{fbe}, one has that 
%	the sequence $\left\{u_{s_k}\right\}$ is uniformly bounded in $H^1(\mathbb{R}^n)$ with respect to $s_k$, thus   
	\begin{equation}\label{dgsdbf}
		u_{s_k}(|x|)\leqslant C|x|^{-n}
	\end{equation}
for some constant $C>0$ only depending on $n,p,\lambda_1,\lambda_0$.

We split this proof into the following steps.

Step 1. We prove strong convergence of  $\left\{u_{s_k}\right\}$ in $L^{p+2}(\mathbb{R}^n)$.

Indeed, since $u_{s_k}$ is bounded in $H^1(\mathbb{R}^n)$, there exists $u_\gamma$ such that $u_{s_k}\rightharpoonup u_\gamma$ weakly in $H^1(\mathbb{R}^n)$. Furthermore, by local Rellich compactness, we have that $u_{s_k}\rightarrow u_\gamma$ strongly in $L^{p+2}_{\rm loc}(\mathbb{R}^n)$ and pointwise a.e. in $\mathbb{R}^n$. Let $\epsilon>0$ be given, the  uniform decay estimate~\eqref{dgsdbf} and  Fatou Lemma imply that there exists $R_\epsilon>0$ sufficiently large that 
\begin{equation}\label{vdsv}
	\int_{|x|>R_\epsilon} |u_{s_k}|^{p+2}\, dx\leqslant \frac{\epsilon^{p+2}}{4} \quad \text{and }\quad \int_{|x|>R_\epsilon} |u_{\gamma}|^{p+2}\, dx\leqslant\frac{\epsilon^{p+2}}{4}.
\end{equation}
Moreover, since $u_{s_k}\rightarrow u_\gamma$ in $L^{p+2}_{\rm loc}(\mathbb{R}^n)$, there exists $k_0>0$ such that 
\begin{equation}\label{vsdsd}
	\int_{|x|\leqslant R_\epsilon} |u_{s_k}-u_{\gamma}|^{p+2}\, dx\leqslant\frac{\epsilon^{p+2}}{2}\quad \text{for every } k>k_0.
\end{equation}
By combining~\eqref{vdsv} and \eqref{vsdsd}, we thus conclude that
\begin{equation}\label{sdbsd}
	\|u_{s_k}-u_\gamma\|_{L^{p+2}(\mathbb{R}^n)}\leqslant \epsilon \quad \text{for all } k> k_0.
\end{equation}
 This implies that
 \begin{equation}\label{sdgdf}
 	u_{s_k}\rightarrow u_\gamma\quad  \text{ strongly in } L^{p+2}(\mathbb{R}^n).
 \end{equation} 

Step 2. We show that 
\begin{equation}\label{cdsvfd}
	u_{s_k}\rightarrow u_\gamma  \quad \text{in } H^1(\mathbb{R}^n).
\end{equation}
For this,  we observe that
 \begin{equation}\label{gdr}
 	\begin{split}
 		&\int_{\mathbb{R}^{n}} \nabla (u_{s_k}-u_\gamma)\nabla u_{s_k} +\int_{\mathbb{R}^{n}}(-\Delta)^{s_k} u_{s_k} (u_{s_k}-u_\gamma) +\int_{\mathbb{R}^{n}} u_{s_k} (u_{s_k}-u_\gamma)= \int_{\mathbb{R}^{n}}   u_{s_k}^{p+1}(u_{s_k}-u_\gamma).
 \end{split}
 \end{equation} 

Employing \cite[Lemma~2.4]{MR3207007} for $\gamma>0$ and Lemma~\ref{Lemma gamma=0} for $\gamma=0$, in virtue of lemma~\ref{lemma 2 norm is bound}, we see that 
\begin{equation}\label{edgsd}
\|(-\Delta)^{s_k} u_{s_k}-(-\Delta)^{\gamma} u_{s_k}\|_{L^2(\mathbb{R}^n)}\leqslant C_{\gamma}|\gamma-s_k|
\end{equation}
for a suitable $C_\gamma>0.$ Recalling Lemma~\ref{lemma lambda1}, 
by combining~\eqref{sdgdf}, ~\eqref{gdr} and~\eqref{edgsd}, it follows that 
\begin{equation}\label{vdsfd}
	\begin{split}
		\|u_{s_k}\|_{\gamma}^2&=
		\int_{\mathbb{R}^{n}} \nabla u_\gamma\nabla u_{s_k} +\int_{\mathbb{R}^{n}}(-\Delta)^{\gamma} u_{s_k} u_\gamma +\int_{\mathbb{R}^{n}} u_{s_k} u_\gamma+o(1)\\
		&=	\left\langle u_{s_k},u_\gamma \right\rangle_{\gamma}+o(1).
	\end{split}
\end{equation} 
Since $u_{s_k}$ weakly converge to $u_\gamma$ in $H^1(\mathbb{R}^n)$,  from~\eqref{vdsfd}, one deduces  that 
\begin{equation}
\lim\limits_{k\rightarrow\infty}	\|u_{s_k}\|_{\gamma}^2=\|u_{\gamma}\|_{\gamma}^2.
\end{equation}
This implies the desired result~\eqref{cdsvfd}.
 \smallskip

Step 3. We prove that $u_\gamma\in \mathcal{M}_\gamma$.

We notice that  for any $\varphi\in C_c^\infty(\mathbb{R}^n)$
 \begin{equation}\label{bsfgsd}
	 	\int_{\mathbb{R}^{n}} -\Delta \varphi\,  u_{s_k} +\int_{\mathbb{R}^{n}}(-\Delta)^{s_k} \varphi\,  u_{s_k} +\int_{\mathbb{R}^{n}} u_{s_k} \varphi = \int_{\mathbb{R}^{n}}   u_{s_k}^{p+1}\, \varphi.
	 \end{equation} 
By combining~\eqref{cdsvfd} with~\eqref{edgsd}, passing the limit in~\eqref{bsfgsd},  we obtain that $u_\gamma$ solves the following equation in the weak sense
\begin{equation}
	-\Delta u_\gamma + (-\Delta)^\gamma u_\gamma +u_\gamma=u_\gamma^{p+1}.
\end{equation}
By testing the equation against $u_\gamma$ itself, one has that $\|u_\gamma\|^2_{\gamma}=\|u_\gamma\|^{p+2}_{L^{p+2}}$.

To prove that $u_\gamma\in \mathcal{M}_\gamma$, it suffices to show that $J_{\gamma}(u_\gamma)=\lambda_\gamma.$ 
For this, we first consider the case $\gamma>0$.
 Let $s$, $\bar{s}\in (d,1]$ for some $d>0$, that will be taken one close to the other.
We might as well assume that $ \lambda_{s}\leqslant \lambda_{\bar{s}}$ and $u_{s}\in \mathcal{M}_{s}$,   owing to Lemmata~\ref{lemma lambda1} and~\ref{lemma 2 norm is bound}, we see that 
\begin{equation}\label{vdfv}
	\begin{split}
		\lambda_{\bar{s}}-\lambda_{s}&\leqslant \|u_{s}\|^{-2}_{L^{p+2}(\mathbb{R}^n)}\left(\|u_{s}\|^2_{\bar{s}}-\|u_{s}\|^2_{s}\right) \leqslant \|u_{s}\|^{-2}_{L^{p+2}(\mathbb{R}^n)}\int_{\mathbb{R}^{n}}\left||\xi|^{2\bar{s}}-|\xi|^{2s}\right||\widehat{u_{s}}|^2\\
		&\leqslant 4|\bar{s}-s|\, \|u_{s}\|^{-2}_{L^{p+2}(\mathbb{R}^n)}\int_{\mathbb{R}^{n}}\left((2es)^{-1}+(2\bar{s})^{-1}\right)(1+|\xi|^{4\bar{s}})|\widehat{u_{s}}|^2\\
		&\leqslant 4d^{-1}\lambda_{s}^{-2/p}|\bar{s}-s|\, \|u_s\|^2_{2}\leqslant C(p,d,\lambda_1, n)|\bar{s}-s|.
	\end{split}
\end{equation}
Since the roles
of $s$ and $\bar{s}$ may be interchanged, one derives that
\begin{equation}\label{vsdfvsdfds}
	|\lambda_{\bar{s}}-\lambda_{s}|\leqslant C(p,d,\lambda_1, n)|\bar{s}-s|.
\end{equation}
Therefore, taking $k$ sufficiently large such that $s_k>\gamma/2$, 
\eqref{vsdfvsdfds} implies that $\lambda_{s_k}\rightarrow \lambda_\gamma$ as $s_k\rightarrow \gamma.$ Furthermore, from~\eqref{sdgdf}, we observe that 
$ \|u_{s_k}\|_{L^{p+2}(\mathbb{R}^n)}\rightarrow \|u_{\gamma}\|_{L^{p+2}(\mathbb{R}^n)} $.
Thus, by combining the definition of $J_\gamma$ and the fact that $\lambda_{s_k}=\|u_{s_k}\|^p_{L^{p+2}(\mathbb{R}^n)}$, we obtain that 
\begin{equation}
	\lambda_{\gamma}=\lim\limits_{k\rightarrow\infty}\|u_{s_k}\|^p_{L^{p+2}(\mathbb{R}^n)}= \|u_{\gamma}\|^p_{L^{p+2}(\mathbb{R}^n)} ={\|u_\gamma\|_{\gamma}^2}\|u_\gamma\|^{-2}_{L^{p+2}(\mathbb{R}^n)}=J_\gamma(u_\gamma).
\end{equation}
This implies that $u_\gamma\in\mathcal{M}_\gamma.$

We next consider the case $\gamma=0 $,
 Let $s>0$ sufficiently small, 
we might as well assume that $ \lambda_{s}\leqslant \lambda_{0}$ and $u_{s}\in \mathcal{M}_{s}$,   owing to Lemma~\ref{lemma 2 norm is bound}, we see that 
\begin{equation}\label{vdfvw}
	\begin{split}
		\lambda_{0}-\lambda_{s}&\leqslant \|u_{s}\|^{-2}_{L^{p+2}(\mathbb{R}^n)}\left(\|u_{s}\|^2_{0}-\|u_{s}\|^2_{s}\right) \leqslant \|u_{s}\|^{-2}_{L^{p+2}(\mathbb{R}^n)}\int_{\mathbb{R}^{n}}\left|1-|\xi|^{2s}\right||\widehat{u_{s}}|^2\\
		&\leqslant  2s\|u_{s}\|^{-2}_{L^{p+2}(\mathbb{R}^n)}\left(\|u_s\|^2_{2}+2e^{-1}c_n\|{u}_s\|^2_{L^1(\mathbb{R}^n)}\right)\leqslant s\,C(p,\lambda_1, n,\lambda_0),
	\end{split}
\end{equation}
thanks to~\eqref{bjk} and~\eqref{fbe}.
Thus, one derives that
\begin{equation}\label{vsdfvsdfdsn}
	|\lambda_{0}-\lambda_{s}|\leqslant s\,  C(p,\lambda_0,\lambda_1, n).
\end{equation}
Therefore, 
\eqref{vsdfvsdfdsn} yields that $\lambda_{s_k}\rightarrow \lambda_0$ as $k\rightarrow \infty.$ Furthermore, from~\eqref{sdgdf} and the definition of $J_0$, we obtain that 
\begin{equation}
	\lambda_{0}=\lim\limits_{k\rightarrow\infty}\|u_{s_k}\|^p_{L^{p+2}(\mathbb{R}^n)}= \|u_{0}\|^p_{L^{p+2}(\mathbb{R}^n)} =J_0(u_0).
\end{equation}
This implies that $u_0\in\mathcal{M}_0.$

\smallskip

Step 4. We prove that 	$\|u_{s_k}-u_\gamma\|_{2}\rightarrow 0$  {as } $k\rightarrow \infty$.

To this end, let us denote $w_{s_k}:=u_\gamma-u_{s_k}$, and one has that 
\begin{equation}\label{sdgr}
	-\Delta w_{s_k}+(-\Delta)^{\gamma}w_{s_k}+w_{s_k}=u_\gamma^{p+1}-(u_\gamma-w_{s_k})^{p+1}+(-\Delta)^{s_k}w_{s_k}-(-\Delta)^{\gamma}w_{s_k}.
\end{equation}
From Theorem~\ref{th: existence and properity}, it follows that  $u_\gamma,u_{s_k}\in C^2(\mathbb{R}^n)$, thus, \eqref{sdgr} holds pointwise.
By combining the fundamental theorem of calculus and~\eqref{fbds}, we see that 
\begin{equation}
	\begin{split}
		|u_\gamma^{p+1}-(u_\gamma-w_{s_k})^{p+1}|&\leqslant(p+1)|w_{s_k}| \int_{0}^{1}|u_\gamma+(t-1)w_{s_k}|^p\, dt\\
		&\leqslant (p+1)|w_{s_k}|\left(2\|u_\gamma\|_{L^\infty}+\|u_{s_k}\|_{L^\infty}\right)^p\leqslant C_{n,p,\lambda_1}|w_{s_k}|.
	\end{split}
\end{equation}
As a consequence of this, recalling~\eqref{sdgr}, using again \cite[Lemma~2.4]{MR3207007} for $\gamma>0$ and Lemma~\ref{Lemma gamma=0} for $\gamma=0$, we have that
\begin{equation}
	\begin{split}
	\|w_{s_k}\|^2_{2}&=\int_{{\mathbb{R}^n}}\left(1+|\xi|^{4}+|\xi|^2\right)|\widehat{w_{s_k}}|^2\, d\xi\leqslant\|-\Delta w_{s_k}+(-\Delta)^{\gamma}w_{s_k}+w_{s_k}\|_{L^2(\mathbb{R}^n)}^2\\
	&\leqslant C_{n,p,\lambda_1,\lambda_0,\gamma}\left(\|w_{s_k}\|^2_{L^2(\mathbb{R}^n)}+|s_k-\gamma|^2\right),
	\end{split}
\end{equation}
thanks to Lemma~\ref{lemma 2 norm is bound}. 
From this and~\eqref{cdsvfd}, we conclude that $\|w_{s_k}\|_{2}\rightarrow 0$ as $s_k\rightarrow \gamma$ as desired.
\end{proof}

\section{Nondegeneracy} \label{sec:Nondegeneracy}

This section is devoted  to establish the nondegeneracy result of   $u_s\in\mathcal{M}_s  $ as stated in Theorem~\ref{th:nondegeneracy} by using the uniqueness and nondegeneracy results for the local case.  Namely, for $j\in\left\{0,1\right\}$, there exists a unique radial minimizer $u_j\in \mathcal{M}_j$, such that
\begin{equation}\label{L1}
\text{Ker}\,L_j=\text{span} \left\{\partial_i u_1,\ i=1,\dots,n\right\}.
\end{equation} 

The proof of Theorem~\ref{th:nondegeneracy} will be divided  into two main steps: nondegeneracy both in the radial sector and nonradial sector. First we conclude the triviality of the kernel of linearized operator $L_s$ in the space of radial function.

\subsection{Nondegeneracy in the radial sector}\label{sec:Nondegeneracy in the radial sector}

\begin{Lemma}\label{lemma nondegeneracy of radial sector}
Let $n\geqslant 1 $,	there exists a constant $s_1$ such that for every $s\in(s_1,1)$, we have 
	\begin{equation}
	(\text{Ker}\,L_s)\cap L^2_{rad}(\mathbb{R}^n)=\left\{0\right\}.
	\end{equation}
\end{Lemma}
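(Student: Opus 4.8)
The plan is to argue by contradiction using the compactness result of Lemma~\ref{lemma convergence} together with the known nondegeneracy \eqref{L1} in the radial sector for the limiting local problem at $\gamma=1$. Suppose the conclusion fails: then there is a sequence $s_k \to 1^-$, ground states $u_{s_k}\in\mathcal{M}_{s_k}$, and nonzero radial functions $\psi_k \in \operatorname{Ker} L_{s_k}\cap L^2_{rad}(\mathbb{R}^n)$, which we normalize by $\|\psi_k\|_{L^2(\mathbb{R}^n)}=1$. Each $\psi_k$ solves $-\Delta\psi_k + (-\Delta)^{s_k}\psi_k + \psi_k = (p+1)u_{s_k}^p\,\psi_k$. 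By Lemma~\ref{lemma convergence} (after passing to a subsequence) we have $u_{s_k}\to u_1$ in $H^2(\mathbb{R}^n)$ for some $u_1\in\mathcal{M}_1$, and by the uniform $L^\infty$ bound of Lemma~\ref{lemma uniform bound} and the decay estimate \eqref{dgsdbf} the potentials $(p+1)u_{s_k}^p$ are uniformly bounded and uniformly decaying at infinity.

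First I would derive uniform $H^2$-bounds on $\psi_k$: testing the $\psi_k$-equation and using $\|(-\Delta)^{s_k}\psi_k - (-\Delta)\psi_k\|_{L^2}\leqslant C|1-s_k|\,\|\psi_k\|_{H^2}$ (via \cite[Lemma~2.4]{MR3207007}, valid near $\gamma=1$), together with the bound $\|(p+1)u_{s_k}^p\psi_k\|_{L^2}\leqslant C\|\psi_k\|_{L^2}$, gives $\|\psi_k\|_{H^2}\leqslant C$ uniformly, for $k$ large. Hence, up to a subsequence, $\psi_k \rightharpoonup \psi_\infty$ weakly in $H^2$ and strongly in $L^2_{\mathrm{loc}}$. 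Passing to the limit in the weak formulation of the $\psi_k$-equation — using the $H^2$-convergence $u_{s_k}\to u_1$, the uniform bounds, and the operator convergence $(-\Delta)^{s_k}\to(-\Delta)$ — shows $\psi_\infty\in L^2_{rad}(\mathbb{R}^n)$ lies in $\operatorname{Ker} L_1$. Since radial functions in $\operatorname{Ker} L_1$ are trivial by \eqref{L1} (the $\partial_i u_1$ are nonradial), we get $\psi_\infty = 0$.

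The crucial remaining point — and the main obstacle — is to upgrade the weak convergence to a genuine contradiction with the normalization $\|\psi_k\|_{L^2}=1$; i.e. to rule out loss of $L^2$-mass at infinity. For this I would exploit the uniform decay of the potential: on $\{|x|>R\}$ the equation reads $-\Delta\psi_k + (-\Delta)^{s_k}\psi_k + (1 - (p+1)u_{s_k}^p)\psi_k = 0$ with $1 - (p+1)u_{s_k}^p \geqslant 1/2$ for $R$ large (uniformly in $k$), so testing against a suitable cutoff of $\psi_k$ gives $\int_{|x|>R}|\psi_k|^2\,dx \leqslant \varepsilon$ uniformly in $k$ (the nonlocal term contributes a controlled commutator error handled as in the Moser iteration of Lemma~\ref{lemma uniform bound}). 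Combined with $\psi_k\to 0$ in $L^2_{\mathrm{loc}}$, this forces $\|\psi_k\|_{L^2}\to 0$, contradicting the normalization. This establishes the existence of $s_1\in(0,1)$ with $(\operatorname{Ker} L_s)\cap L^2_{rad}(\mathbb{R}^n)=\{0\}$ for all $s\in(s_1,1)$.
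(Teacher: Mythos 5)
Your proposal is correct and follows essentially the same approach as the paper: contradiction via a sequence $s_k\nearrow 1$, uniform bounds on the normalized kernel elements, weak $H^1$/local compactness, a cutoff argument exploiting the decay of $u_{s_k}$ and coercivity of $L_{s_k}$ away from the origin to rule out loss of mass at infinity, and passing to the limit to produce a nontrivial radial element of $\operatorname{Ker} L_1$, contradicting \eqref{L1}. The only cosmetic differences are that the paper normalizes in $L^{p+2}$ rather than $L^2$ and controls the nonlocal tail term via a precise fractional cutoff estimate (\cite[Lemma~2.5]{MR4670033}) rather than by analogy with the Moser iteration, but the logic and key mechanism are identical.
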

\begin{proof}
	Let $v_s\in (\text{Ker}\,L_s)\cap L^2_{rad}(\mathbb{R}^n)$. 
	Suppose by contradiction that there exists a sequence $\left\{s_k\right\}$ with $ s_k\nearrow 1$ and $v_{s_k}\neq 0$ satisfying
\begin{equation}\label{fdfgre}
	-\Delta v_{s_k} +(-\Delta)^{s_k} v_{s_k}+v_{s_k}=(p+1)u_{s_k}^pv_{s_k}
\end{equation}
 where $u_{s_k}\in \mathcal{M}_{s_k}$. Up to normalization, we may assume that $\|v_{s_k}\|_{L^{p+2}(\mathbb{R}^n)}=1$.
	From Lemma~\ref{lemma convergence}, it follows that there exists  a unique  $u_1\in\mathcal{M}_1$ such that $u_{s_k}\rightarrow u_1$ in $L^{p+2}(\mathbb{R}^n)$. Moreover, we observe that 
	\begin{equation}\label{}
		\|v_{s_k}\|^2_{s_k}=(p+1)\int_{\mathbb{R}^{n}}|u_{s_k}|^p\,|v_{s_k}|^2\, dx\leqslant(p+1)\|u_{s_k}\|^p_{L^{p+2}(\mathbb{R}^n)}\|v_{s_k}\|^2_{L^{p+2}(\mathbb{R}^n)},
	\end{equation}
and thus, Lemma~\ref{lemma lambda1} implies that $\left\{v_{s_k}\right\}$ is a radial and uniformly bounded in $H^1(\mathbb{R}^n)$ with respect to $s_k$. Hence,  there exists $ v$ such that 
$v_{s_k}\rightharpoonup v$ weakly in $H^1(\mathbb{R}^n)$. Furthermore, by local Rellich compactness, we have that $v_{s_k}\rightarrow v$ strongly in $L^{2}_{\rm loc}(\mathbb{R}^n)$.

 We now claim that 
\begin{equation}\label{cghv}
	v_{s_k}\rightarrow v \quad \text{in } L^2(\mathbb{R}^n).
\end{equation}
Indeed, for given $R>0$, we choose a  cutoff function $\varphi\in C^\infty(\mathbb{R}^n, \mathbb{R})$ satisfying 
\begin{equation}\label{varphi cut off}
	\begin{cases}
	\varphi(x)=0  &\text{ if }  |x|<R\\
		\varphi(x)=1  &\text{ if }  |x|>2R\\
		0\leqslant\varphi \leqslant 1\quad \text{and }\quad  |\nabla\varphi|\leqslant \frac{2}{R}  &\text{ in } \mathbb{R}^n.
	\end{cases}
\end{equation}
By plugging the test function  $\varphi^2 v_{s_k}$ into equation~\eqref{fdfgre}, one has that,
\begin{equation}\label{cdvsdv}
	\begin{split}
	&\int_{{\mathbb{R}^n}}|\nabla(\varphi v_{s_k})|^2+c_{n,s_k}\int_{{\mathbb{R}^{2n}}}\frac{|(\varphi v_{s_k})(x)-(\varphi v_{s_k})(y)|^2}{|x-y|^{n+2s_k}}+\int_{{\mathbb{R}^n}}(\varphi v_{s_k})^2\\
	=&\int_{{\mathbb{R}^n}}v_{s_k}^2|\nabla\varphi|^2+c_{n,s_k}\int_{{\mathbb{R}^{2n}}}\frac{v_{s_k}(x)v_{s_k}(y)|\varphi(x)-\varphi(y)|^2}{|x-y|^{n+2s_k}}+\int_{{\mathbb{R}^n}}(p+1)|u_{s_k}|^p(\varphi v_{s_k})^2\\
	\leqslant& 4R^{-2} \|v_{s_k}\|^2_{L^2(\mathbb{R}^n)}+\frac{c_{n,s_k}}{2}\int_{{\mathbb{R}^{2n}}}\frac{(v_{s_k}^2(x)+v_{s_k}^2(y))|\varphi(x)-\varphi(y)|^2}{|x-y|^{n+2s_k}}+\int_{{\mathbb{R}^n}}(p+1)|u_{s_k}|^p(\varphi v_{s_k})^2\\
	\leqslant & 4R^{-2} \|v_{s_k}\|^2_{L^2(\mathbb{R}^n)}+c_{n,s_k}\int_{{\mathbb{R}^{2n}}}\frac{v_{s_k}^2(x)|\varphi(x)-\varphi(y)|^2}{|x-y|^{n+2s_k}}+\int_{|x|>R}(p+1)|u_{s_k}|^pv_{s_k}^2.
	\end{split}
\end{equation}
From \cite[Lemma 2.5]{MR4670033}, it follows that 
\begin{equation}\label{fbsgsf}
	\int_{{\mathbb{R}^{2n}}}\frac{v_{s_k}^2(x)|\varphi(x)-\varphi(y)|^2}{|x-y|^{n+2s_k}}\leqslant C_n R^{-2s_k} \left(\frac{1}{1-s_k}+\frac{1}{s_k}\right)\|v_{s_k}\|^2_{L^2(\mathbb{R}^n)}.
\end{equation} 
Recalling~\eqref{c_n,s}, by combining~\eqref{cdvsdv} with~\eqref{fbsgsf}, one has that
\begin{equation}\label{svddf}
	\int_{|x|>2R}|v_{s_k}|^2\leqslant \int_{{\mathbb{R}^n}}(\varphi v_{s_k})^2\leqslant C_n\left(R^{-2}+ R^{-2s_k}\right) \|v_{s_k}\|^2_{L^2(\mathbb{R}^n)}+\int_{|x|>R}(p+1)|u_{s_k}|^pv_{s_k}^2.
\end{equation}
Since $v_{s_k}$ is  uniformly bounded in $H^1(\mathbb{R}^n)$, owing to~\eqref{dgsdbf} and the fact that $s_k\nearrow 1$, for any $\epsilon>0$, there exists $R_\epsilon>0$ sufficiently large such that \begin{equation}\label{vsbsf}
	\int_{|x|>R_\epsilon}(p+1)|u_{s_k}|^pv_{s_k}^2 \leqslant C_{n,p,\lambda_1,\lambda_0}|R_\epsilon|^{-{np}}\|v_{s_k}\|^2_{L^2(\mathbb{R}^n)} <\frac{\epsilon^2}{8},
\end{equation}
\begin{equation}\label{sfvdsf}
\text{and }\qquad \qquad	C_n\left(R_\epsilon^{-2}+ R_\epsilon^{-2s_k}\right) \|v_{s_k}\|^2_{L^2(\mathbb{R}^n)}<\frac{\epsilon^2}{8}.
\end{equation}
Moreover, picking $R:=R_\epsilon$ in~\eqref{varphi cut off},  by combining~\eqref{svddf}, ~\eqref{vsbsf} and~\eqref{sfvdsf}, one has that 
$\int_{|x|>2R_\epsilon}|v_{s_k}|^2\leqslant \epsilon^2/4.$ Furthermore, the Fatou Lemma implies that $	\int_{|x|>2R_\epsilon}|v|^2\leqslant \epsilon^2/4.$

Since $v_{s_k}\rightarrow v$ in $L^{2}_{\rm loc}(\mathbb{R}^n)$, there exists $k_0>0$ such that 
\begin{equation}\label{vdsvds}
	\int_{|x|\leqslant 2R_\epsilon} |v_{s_k}-v|^{2}\, dx\leqslant\frac{\epsilon^{2}}{2}\quad \text{for every } k>k_0.
\end{equation}
Gathering these above facts,  we thus conclude that
\begin{equation}\label{vvdsvds}
	\|v_{s_k}-v\|_{L^{2}(\mathbb{R}^n)}\leqslant \epsilon \quad \text{for all } k> k_0.
\end{equation}
 This implies the claim~\eqref{cghv}.

Moreover, the Sobolev embeddings and uniform bound on $H^1$-norm of $v_{s_k}$ ensure that $\|v_{s_k}\|_{L^q(\mathbb{R}^n)}\leqslant C_{n,p,\lambda_1,\lambda_0}$  where $q$ is either equal to  $\frac{2n}{n-2}$ if $n > 2$ or any real number in $(p+2,+\infty)$
if $n =1,2$. Thus, based on the strong convergence of  $\left\{v_{s_k}\right\}$ in $L^{2}(\mathbb{R}^n)$, by interpolation inequality and Fatou Lemma, we deduce that $v_{s_k}\rightarrow v$ in $L^{p+2}(\mathbb{R}^n)$.

In particular, $\|v\|_{L^{p+2}(\mathbb{R}^n)}=1$. Moreover, since $v_{s_k}$ is a solution of~\eqref{fdfgre}, one has that for every $\varphi\in C_c^\infty(\mathbb{R}^n)$
\begin{equation}
	\int_{\mathbb{R}^{n}}-\Delta \varphi \,v_{s_k}+	\int_{\mathbb{R}^{n}}(-\Delta)^{s_k} \varphi \,v_{s_k}+	\int_{\mathbb{R}^{n}} \varphi \,v_{s_k}=(p+1)	\int_{\mathbb{R}^{n}}u_{s_k}^p \varphi \,v_{s_k}.
\end{equation}
  In view of \cite[Lemma~2.4]{MR3207007}, we know that $(-\Delta)^{s_k}\varphi\rightarrow -\Delta\varphi$ in $L^2(\mathbb{R}^n)$. Therefore, by combining~\eqref{cdsvfd} with ~\eqref{cghv}, we infer that 
  \begin{equation}
  	\int_{\mathbb{R}^{n}}-\Delta \varphi \,v+	\int_{\mathbb{R}^{n}} -\Delta \varphi \,v+	\int_{\mathbb{R}^{n}} \varphi \,v=(p+1)	\int_{\mathbb{R}^{n}}u_{1}^p \varphi \,v.
  \end{equation}
Therefore, we conclude that $v\in \text{Ker}\,L_1\cap L^2_{\rm rad}(\mathbb{R}^n)$. This  clearly contradicts~\eqref{L1}.
\end{proof}

\begin{Lemma}\label{lemma nondegeneracy of radial sector s1}
	Let $n\geqslant 1 $,	there exists a constant $s_0$ such that for every $s\in(0,s_0)$, we have 
	\begin{equation}
		(\text{Ker}\,L_s)\cap L^2_{rad}(\mathbb{R}^n)=\left\{0\right\}.
	\end{equation}
\end{Lemma}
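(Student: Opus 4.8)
The plan is to mirror the proof of Lemma~\ref{lemma nondegeneracy of radial sector} verbatim, replacing the limit $s\nearrow 1$ by $s\searrow 0$ and the endpoint operator $-\Delta$ by the endpoint operator $(-\Delta)^0 = \mathrm{Id}$. Concretely, suppose by contradiction that there exist $s_k\searrow 0$ and $v_{s_k}\in(\mathrm{Ker}\,L_{s_k})\cap L^2_{rad}(\mathbb{R}^n)$ with $v_{s_k}\not\equiv 0$ solving $-\Delta v_{s_k}+(-\Delta)^{s_k}v_{s_k}+v_{s_k}=(p+1)u_{s_k}^p v_{s_k}$, normalized so that $\|v_{s_k}\|_{L^{p+2}(\mathbb{R}^n)}=1$. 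Testing against $v_{s_k}$ and using Lemma~\ref{lemma lambda1} gives a uniform $H^1$-bound, so up to a subsequence $v_{s_k}\rightharpoonup v$ weakly in $H^1$ and strongly in $L^2_{\rm loc}$; by Lemma~\ref{lemma convergence} we also have $u_{s_k}\to u_0$ in $L^{p+2}$ for the unique $u_0\in\mathcal{M}_0$.

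The first real step is to upgrade the local convergence of $\{v_{s_k}\}$ to strong convergence in $L^2(\mathbb{R}^n)$. This is the tail-estimate argument: using the same cutoff $\varphi$ as in \eqref{varphi cut off}, plug $\varphi^2 v_{s_k}$ into the equation, and control the nonlocal commutator term $c_{n,s_k}\int_{\mathbb{R}^{2n}}\frac{v_{s_k}^2(x)|\varphi(x)-\varphi(y)|^2}{|x-y|^{n+2s_k}}$ via \cite[Lemma 2.5]{MR4670033}, which yields a bound $C_n R^{-2s_k}(\tfrac{1}{1-s_k}+\tfrac{1}{s_k})\|v_{s_k}\|_{L^2}^2$. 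The subtlety here, and what I expect to be the main obstacle, is that as $s_k\to 0$ the factor $R^{-2s_k}\to 1$ rather than $\to 0$, so one cannot make the tail small merely by taking $R$ large with $s_k$ fixed; moreover the $c_{n,s_k}\sim s_k(1-s_k)\cdot\mathrm{const}$ scaling from \eqref{c_n,s} must be tracked, since $c_{n,s_k}\to 0$. In fact the product $c_{n,s_k}\cdot\tfrac{1}{s_k}$ stays bounded, so the commutator term is $O(R^{-2s_k})=O(1)$, not $o(1)$. The way around this is that one does not need the full nonlocal term to vanish: on the tail $|x|>2R$ one has the trivial lower bound $\int_{|x|>2R}|v_{s_k}|^2 \leqslant \int_{\mathbb{R}^n}(\varphi v_{s_k})^2$, and on the right one keeps $\int_{\mathbb{R}^n}(\varphi v_{s_k})^2$ (the $L^2$ zeroth-order term in the quadratic form, which has coefficient $1$ and is \emph{not} small) and absorbs it: the left-hand side of the energy identity contains $\int(\varphi v_{s_k})^2$ with coefficient $1$, so after cancellation the genuinely small quantities are the gradient commutator $4R^{-2}\|v_{s_k}\|_{L^2}^2$ and the potential tail $\int_{|x|>R}(p+1)|u_{s_k}|^p v_{s_k}^2$, the latter small by the uniform decay \eqref{dgsdbf} of $u_{s_k}$ and the uniform $L^2$-bound on $v_{s_k}$; the nonlocal commutator, being nonnegative, can simply be kept on the left and discarded, or bounded crudely and absorbed. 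Thus one still obtains $\int_{|x|>2R_\epsilon}|v_{s_k}|^2\leqslant \epsilon^2/4$ for $R_\epsilon$ large, uniformly in $k$, and combined with $L^2_{\rm loc}$ convergence this gives $v_{s_k}\to v$ in $L^2(\mathbb{R}^n)$; interpolating with the uniform higher-integrability from Sobolev embedding then gives $v_{s_k}\to v$ in $L^{p+2}(\mathbb{R}^n)$, hence $\|v\|_{L^{p+2}(\mathbb{R}^n)}=1$ and in particular $v\not\equiv 0$.

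The final step is to pass to the limit in the weak formulation. For any $\varphi\in C_c^\infty(\mathbb{R}^n)$ one has $\int (-\Delta)\varphi\,v_{s_k}+\int(-\Delta)^{s_k}\varphi\,v_{s_k}+\int\varphi\,v_{s_k}=(p+1)\int u_{s_k}^p\varphi\,v_{s_k}$; here the analogue of \cite[Lemma~2.4]{MR3207007} for the limit $s_k\to 0$, namely that $(-\Delta)^{s_k}\varphi\to\varphi$ in $L^2(\mathbb{R}^n)$ for $\varphi\in C_c^\infty$ — which can be proved directly by the Fourier computation in Lemma~\ref{Lemma gamma=0} since $\||\xi|^{2s_k}\hat\varphi-\hat\varphi\|_{L^2}\to 0$ by dominated convergence using $|\xi|^{2s_k}\leqslant 1+|\xi|^2$ — together with the $L^2$ and $L^{p+2}$ convergence of $v_{s_k}$ and the $L^{p+2}$ convergence of $u_{s_k}$, lets us pass to the limit and conclude that $v$ solves $-\Delta v + v + v = (p+1)u_0^p v$, i.e. $v\in\mathrm{Ker}\,L_0\cap L^2_{\rm rad}(\mathbb{R}^n)$ with $v\not\equiv 0$. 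This contradicts the nondegeneracy \eqref{L1} of the local minimizer $u_0$ in the radial sector, completing the proof.
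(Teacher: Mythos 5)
Your setup and limit-passage steps are fine, and you correctly diagnose why a verbatim transplant of the cutoff argument from Lemma~\ref{lemma nondegeneracy of radial sector} breaks down as $s\searrow 0$: the estimate $c_{n,s_k}\int_{\mathbb{R}^{2n}}\frac{v_{s_k}^2(x)|\varphi(x)-\varphi(y)|^2}{|x-y|^{n+2s_k}}\lesssim R^{-2s_k}\|v_{s_k}\|_{L^2}^2$ from \cite[Lemma 2.5]{MR4670033} combined with \eqref{c_n,s} gives only $O(1)$ rather than $o_R(1)$, since $R^{-2s_k}\to 1$. However, your proposed workaround is not correct. After dropping the nonnegative gradient and nonlocal Dirichlet terms for $\varphi v_{s_k}$, the energy identity reads
\begin{equation}
\int_{\mathbb{R}^n}(\varphi v_{s_k})^2 \ \leqslant\ \int_{\mathbb{R}^n} v_{s_k}^2|\nabla\varphi|^2 \ +\ c_{n,s_k}\int_{\mathbb{R}^{2n}}\frac{v_{s_k}(x)v_{s_k}(y)|\varphi(x)-\varphi(y)|^2}{|x-y|^{n+2s_k}} \ +\ (p+1)\int_{\mathbb{R}^n}|u_{s_k}|^p(\varphi v_{s_k})^2,
\end{equation}
and $\int(\varphi v_{s_k})^2$ appears \emph{only} on the left; there is nothing on the right to cancel it against. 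Moreover the nonlocal commutator on the right involves $v_{s_k}(x)v_{s_k}(y)$, which has no sign (radial functions in the kernel may change sign in $r$), so it is not nonnegative, and even if it were, a nonnegative $O(1)$ term sitting on the right-hand side of an upper bound cannot be ``kept on the left and discarded.'' The inequality therefore stays $\int(\varphi v_{s_k})^2\leqslant O(1)$, which gives no tail control.

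The paper does not attempt to repair the cutoff computation; it switches tools entirely. It applies Lemma~\ref{lemma uniform decay of spectrum} (Appendix C) with $V=-(p+1)u_{s_k}^p$, $W=-1$, $\lambda=1/2$, noting that by the uniform decay \eqref{dgsdbf} of $u_{s_k}$ there is an $R>0$ independent of $s$ with $V+\lambda\geqslant 0$ for $|x|\geqslant R$. Part (iii) of that lemma then supplies the \emph{pointwise} bound $|w_{s_k}(x)|\leqslant C|x|^{-n}$ for $|x|\geqslant 1$, uniformly for $s_k$ small, which immediately yields the uniform $L^2$-tail estimate you need. That lemma is a genuine ingredient, not a formality: part (iii) rests on a comparison with the resolvent kernel $\mathcal{K}_\lambda$ together with heat-kernel estimates that are uniform as $s\to 0^+$ (see \eqref{bdfbd} and \eqref{bdfbdfdv}). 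So the missing idea in your proposal is precisely this replacement of the energy/cutoff tail estimate by a maximum-principle decay estimate. Once the $L^2(\mathbb{R}^n)$ convergence of $\{v_{s_k}\}$ is secured, your remaining steps (interpolation to $L^{p+2}$, passage to the limit via Lemma~\ref{Lemma gamma=0}, contradiction with the nondegeneracy of $L_0$) match the paper.
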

\begin{proof}
	Let $w_s\in (\text{Ker}\,L_s)\cap L^2_{rad}(\mathbb{R}^n)$. 
We argue by contradiction and we suppose  that there exists a sequence $\left\{s_k\right\}$ with $ s_k\searrow 0$ and $w_{s_k}\neq 0$ satisfying
\begin{equation}\label{fdfgreq}
	-\Delta w_{s_k} +(-\Delta)^{s_k} w_{s_k}+w_{s_k}=(p+1)u_{s_k}^pw_{s_k}
\end{equation}
where $u_{s_k}\in \mathcal{M}_{s_k}$. Up to normalization, we may assume that $\|w_{s_k}\|_{L^{p+2}(\mathbb{R}^n)}=1$.
From Lemma~\ref{lemma convergence}, it follows that there exists  a unique  $u_0\in\mathcal{M}_0$ such that $u_{s_k}\rightarrow u_0$ in $L^{p+2}(\mathbb{R}^n)$. Moreover, we observe that 
\begin{equation}
	\|w_{s_k}\|^2_{s_k}=(p+1)\int_{\mathbb{R}^{n}}|u_{s_k}|^p\,|w_{s_k}|^2\, dx\leqslant(p+1)\|u_{s_k}\|^p_{L^{p+2}(\mathbb{R}^n)}\|w_{s_k}\|^2_{L^{p+2}(\mathbb{R}^n)},
\end{equation}
and thus, Lemma~\ref{lemma lambda1} implies that $\left\{w_{s_k}\right\}$ is a radial and uniformly bounded in $H^1(\mathbb{R}^n)$ with respect to $s_k$. Hence,  there exists $ w$ such that 
$w_{s_k}\rightharpoonup w$ weakly in $H^1(\mathbb{R}^n)$.

We now claim that 
\begin{equation}\label{cghnv}
	w_{s_k}\rightarrow w \quad \text{in } L^2(\mathbb{R}^n).
\end{equation}
Indeed, we take $V(x)=-(p+1)u_{s_k}^p(x)$, $W=-1$ and $\lambda=\frac{1}{2}$ in Lemma~\ref{lemma uniform decay of spectrum}, owing to~\eqref{dgsdbf}, obtaining that  there exists $R>0$ independent of $s$ such that 
\begin{equation}\label{ver}
	V(x)+\frac{1}{2}>0 \qquad \text{for every } |x|\geqslant R.
\end{equation} 
Thus, Lemma~\ref{lemma uniform decay of spectrum} implies that $\|w_{s_k}\|_{L^\infty(\mathbb{R}^n)}\leqslant C$  which is independent of $k$, and  there exists a positive constant $s_1$ such that, for all $|x|\geqslant 1$ and $s_k\in(0,s_1]$ 
\begin{equation}\label{sdvbsdnkm}
	|w_{s_k}(x)|\leqslant C\left(n,R,p,\|u_{s_k}\|_{L^\infty(\mathbb{R}^n)}\right)|x|^{-n}.
\end{equation}
From this and Lemma~\ref{lemma uniform bound}, we conclude that for any $\epsilon>0$, owing to Fatou Lemma, there exists  $R_\epsilon>0$ independent of $s$ such that 
\begin{equation}\label{dsvsd}
	\int_{|x|>R_\epsilon} |w_{s_k}(x)|^2\, dx\leqslant \epsilon^2/4\quad \text{and} \quad \int_{|x|>R_\epsilon} |w(x)|^2\, dx\leqslant \epsilon^2/4.
\end{equation}
 Furthermore, by local Rellich compactness, we have that $w_{s_k}\rightarrow w$ strongly in $L^{2}_{\rm loc}(\mathbb{R}^n)$. From this and~\eqref{dsvsd},  we obtain the desired result~\eqref{cghnv}.
 
 Moreover,  based on the strong convergence of  $\left\{w_{s_k}\right\}$ in $L^{2}(\mathbb{R}^n)$, by combining the Sobolev embeddings and the interpolation inequality, we deduce that $w_{s_k}\rightarrow w$ in $L^{p+2}(\mathbb{R}^n)$.

 In particular, $\|w\|_{L^{p+2}(\mathbb{R}^n)}=1$. Moreover, since $w_{s_k}$ is a solution of~\eqref{fdfgreq}, one has that for every $\varphi\in C_c^\infty(\mathbb{R}^n)$
 \begin{equation}
 	\int_{\mathbb{R}^{n}}-\Delta \varphi \,w_{s_k}+	\int_{\mathbb{R}^{n}}(-\Delta)^{s_k} \varphi \,w_{s_k}+	\int_{\mathbb{R}^{n}} \varphi \,w_{s_k}=(p+1)	\int_{\mathbb{R}^{n}}u_{s_k}^p \varphi \,w_{s_k}.
 \end{equation}
 In view of Lemma~\ref{Lemma gamma=0}, we know that $(-\Delta)^{s_k}\varphi\rightarrow \varphi$ in $L^2(\mathbb{R}^n)$. Therefore, by combining~\eqref{cdsvfd} with ~\eqref{cghnv}, we infer that 
 \begin{equation}
 	\int_{\mathbb{R}^{n}}-\Delta \varphi \,w+	\int_{\mathbb{R}^{n}} \varphi \,w+	\int_{\mathbb{R}^{n}} \varphi \,w=(p+1)	\int_{\mathbb{R}^{n}}u_{0}^p \varphi \,w.
 \end{equation}
 Therefore, we conclude that $w\in \text{Ker}\,L_0\cap L^2_{\rm rad}(\mathbb{R}^n)$. This  clearly contradicts~\eqref{L1}.
\end{proof}

\subsection{Nondegeneracy in the nonradial sector}\label{sec:Nondegeneracy in the nonradial sector}
From Lemmata~\ref{lemma nondegeneracy of radial sector} and~\ref{lemma nondegeneracy of radial sector s1},  we see that the triviality of the kernel of the linearized operator $L_s$ in the space of radial functions when $s$ close to $1$ and $0$. Furthermore,  to rule out further elements in the kernel of $L_s$ apart from $\partial_i u_s$ with $i=1,\dots,n$, the strategies are: 
\begin{itemize}
\item when $n=1$, we use heat kernel and Perron-Frobenius arguments in Subsection~\ref{sec n=1}, 
\item when $n\geqslant 2$, we combine  a local realization  of  $(-\Delta)^s $ with  spherical harmonics  decomposition method in Subsections~\ref{sec local realization}-\ref{spherical harmonics}.
\end{itemize}

\subsubsection{Nondegeneracy for $n=1$}\label{sec n=1}

Let $u_s\in \mathcal{M}_s$. Denote 
$$L^2_{even}(\mathbb{R}):=\left\{f\in L^2(\mathbb{R}): f(x)=f(-x) \text{ for a.e.} x\in \mathbb{R}  \right\} $$ $$ \text{and }\quad  L^2_{odd}(\mathbb{R}):=\left\{f\in L^2(\mathbb{R}): f(x)=-f(-x) \text{ for  a.e.} x\in \mathbb{R} \right\}. $$
 We now consider the orthogonal decomposition $L^2(\mathbb{R})=L^2_{even}(\mathbb{R})\bigoplus L^2_{odd}(\mathbb{R})$. Since
$ u_s=u_s(|x|)$ is an even function. Furthermore, we notice that $L_s$ leaves the subspaces $L^2_{even}(\mathbb{R})$ and $L^2_{odd}(\mathbb{R}) $
 invariant.
 
\begin{proof}[Proof of Theorem~\ref{th:nondegeneracy} $(n=1)$]
We observe that $\partial_{r}u_s\in L^2_{odd}(\mathbb{R})$ and $\partial_{r}u_s\leqslant 0 $ for $x>0$.  Additionally, $\partial_r u_s\in \text{Ker}\,L_s$, namely  $$-\Delta\partial_{r}u_s+(-\Delta)^s \partial_{r}u_s-(p+1)u_s^p\partial_{r}u_s+\partial_{r}u_s=0.$$
From Proposition~\ref{proposition A.1}, we know that $L_s$ has exactly one negative eigenvalue and the corresponding eigenfunction is even. Thus,  in view of Lemma~\ref{lemma n=1} applied to $L_s$, owing to  Remark~\ref{reamrk 1.2}, we deduce that  $\partial_r u_s\in L^2_{odd}(\mathbb{R})$ is the 
eigenfunction of the lowest eigenvalue of $L_s$ restricted to $L^2_{odd}(\mathbb{R})$. Thus, $\partial_{r}u_s< 0 $ for $x>0$ and $\partial_{r}u_s$ is (up to
a sign) the unique eigenfunction belonging to   $L^2_{odd}(\mathbb{R})\cap \text{Ker}\,L_s $. From this, by combining Lemmata~\ref{lemma nondegeneracy of radial sector} and~\ref{lemma nondegeneracy of radial sector s1}, we complete the proof of Theorem~\ref{th:nondegeneracy} for space dimension $n=1$.  
\end{proof}

We next discuss the nondegeneracy of ground states in the nonradial sector for space dimension $n\geqslant 2$.

\subsubsection{Local realization of $(-\Delta)^s $ with $s\in(0,1)$}\label{sec local realization}
We start by recalling the extension principle in \cite{MR2354493} that expresses the nonlocal operator $(-\Delta)^s$ on $\mathbb{R}^n$ with $s\in(0,1)$ as a generalized Dirichlet-Neumann map for a suitable local elliptic problem posed on the upper half-space $\mathbb{R}^{n+1}_+=\left\{(x,t):\ x\in\mathbb{R}^n,\ t>0\right\}$. That is, given a solution $u$ of $(-\Delta)^s u=f$ in $\mathbb{R}^n$, we can equivalently consider the dimensionally extended problem for $U=U(x,t)$, which solves
\begin{equation}
	\begin{cases}\label{localization}
		\text{div }(t^{1-2s}\nabla U)=0\qquad &\text{ in } \mathbb{R}^{n+1}_+\\
		U(x,0)=u &\text{ in } \mathbb{R}^{n}\\
		-d_s t^{1-2s}\partial_t U|_{t\rightarrow 0}=f
&\text{ on }  \mathbb{R}^{n}.
	\end{cases}
\end{equation}
Here the positive constant $d_s=2^{2s-1}\frac{\Gamma(s)}{\Gamma(1-s)}$.

%  Based on  \eqref{localization}, we  consider the weighted Sobolev space~$H^1(\mathbb{R}^{n+1}_+;\, t^{1-2s})$. From \cite[Theorem 18.57]{MR3726909}, we know that all traces on $\mathbb{R}^n$ of functions in $H^1(\mathbb{R}^{n+1}_+;\, t^{1-2s})$ belong to  $H^s(\mathbb{R}^n)$. Namely, for every $U\in H^1(\mathbb{R}^{n+1}_+;\, t^{1-2s}) $, $u:=$Tr$U\in H^s(\mathbb{R}^n)$.
%   
   Moreover, based on \eqref{localization} and  the mixed local/nonlocal nature of the equation~\eqref{main equation}, we  consider the following Sobolev space:
\begin{equation}\label{working space}
	\mathscr{H}_0^1(\mathbb{R}^{n+1}_+;\, t^{1-2s}):=\left\{U\in H^1(\mathbb{R}^{n+1}_+; \,t^{1-2s}):\ \text{Tr}U\in H^1(\mathbb{R}^n)\right\},
\end{equation}
equipped with the norm 
\[ \|U\|^2_{\mathscr{H}_0^1}=\int_{\mathbb{R}^{n+1}_+}\left(|U|^2+|\nabla U|^2\right)t^{1-2s}\,dt dx+\int_{\mathbb{R}^{n}}\left(| \text{Tr}U|^2+|\nabla \text{Tr}U|^2\right)\, dx. \]
Here Tr$U $ is the trace  of function $U$ on $\mathbb{R}^n$. From \cite[Theorem 18.57]{MR3726909}, we know that  for every $U\in H^1(\mathbb{R}^{n+1}_+;\, t^{1-2s}) $, Tr$U\in H^s(\mathbb{R}^n)$. Thus, the space $\mathscr{H}_0^1(\mathbb{R}^{n+1}_+;\, t^{1-2s})$ defined in~\eqref{working space} is a complete subspace of the weighted Sobolev space $H^1(\mathbb{R}^{n+1}_+;\,t^{1-2s})$.

As a result, given $u\in H^1(\mathbb{R}^n)$, there exists a unique extension $U\in \mathscr{H}_0^1(\mathbb{R}^{n+1}_+;\, t^{1-2s})$ such that
\begin{equation}
	\begin{cases}\label{local realization}
		\text{div }(t^{1-2s}\nabla U)=0\qquad &\text{ in } \mathbb{R}^{n+1}_+\\
		U(x,0)=u &\text{ in } \mathbb{R}^{n}\\
		-\Delta u-d_s t^{1-2s}\partial_t U|_{t\rightarrow 0}=	-\Delta u+(-\Delta)^su
		&\text{ on }  \mathbb{R}^{n}.
	\end{cases}
\end{equation}
Equivalently, for every $\Psi\in \mathscr{H}_0^1(\mathbb{R}^{n+1}_+;\, t^{1-2s})$, let us denote $\psi:=$Tr$\Psi$. Then,
\begin{equation}\label{cnvisdhvv}
\int_{{\mathbb{R}^n}}\nabla u\cdot\nabla\psi\, dx+	\int_{\mathbb{R}^{n+1}_+}\nabla U\cdot\nabla\Psi\, t^{1-2s}\,dtdx=\int_{{\mathbb{R}^n}}|\xi|^{2}\hat{u}\hat{\psi}\, d\xi+d_s^{-1}\int_{{\mathbb{R}^n}}|\xi|^{2s}\hat{u}\hat{\psi}\, d\xi.
\end{equation}
From this, we observe that
\begin{equation}
	\begin{split}
		&\inf\limits_{\Psi\in \mathscr{H}_0^1(\mathbb{R}^{n+1}_+;\, t^{1-2s})\backslash\left\{0\right\}}\frac{d_s\int_{\mathbb{R}^{n+1}_+}|\nabla \Psi|^2t^{1-2s}dxdt+\int_{\mathbb{R}^{n}}\left(1+|\xi|^2\right)|\hat{\psi}|^2\, d\xi}{\|\psi\|^2_{L^{p+2}(\mathbb{R}^n)}}\\
		=&\inf\limits_{\Psi\in {H}^1(\mathbb{R}^{n+1}_+;\, t^{1-2s}) \atop
			 \psi\in H^1(\mathbb{R}^n)\backslash\left\{0\right\}}\frac{d_s\int_{\mathbb{R}^{n+1}_+}|\nabla \Psi|^2t^{1-2s}dxdt+\int_{\mathbb{R}^{n}}\left(1+|\xi|^2\right)|\hat{\psi}|^2\, d\xi}{\|\psi\|^2_{L^{p+2}(\mathbb{R}^n)}}\\
		 =& \inf\limits_{\psi\in H^1(\mathbb{R}^n)\backslash\left\{0\right\}}\frac{\int_{\mathbb{R}^{n}}\left(1+|\xi|^{2s}+|\xi|^2\right)|\hat{\psi}|^2\, d\xi}{\|\psi\|^2_{L^{p+2}(\mathbb{R}^n)}}=\inf\limits_{\psi\in H^1(\mathbb{R}^n)\backslash\left\{0\right\}}J_s(\psi).
	\end{split}
\end{equation}
Hence, let $u_s\in\mathcal{M}_s$, the extension $U_s$ of $u_s$ is radially symmetric with respect to the $x$ variable and it is a minimizer for 
\begin{equation}\label{dfwedf}
	\begin{split}
	 \lambda_s=&\inf\limits_{\Psi\in \mathscr{H}_0^1(\mathbb{R}^{n+1}_+;\, t^{1-2s})\backslash\left\{0\right\}}\frac{d_s\int_{\mathbb{R}^{n+1}_+}|\nabla \Psi|^2t^{1-2s}dxdt+\int_{\mathbb{R}^{n}}\left(1+|\xi|^2\right)|\hat{\psi}|^2\, d\xi}{\|\psi\|^2_{L^{p+2}(\mathbb{R}^n)}}\\
	 =: &\inf\limits_{\Psi\in \mathscr{H}_0^1(\mathbb{R}^{n+1}_+;\, t^{1-2s})\backslash\left\{0\right\}} \mathscr{J}_s(\Psi).
 \end{split}
\end{equation}
As a consequence of this and~\eqref{local realization}, one has that
\begin{equation}
	\begin{cases}\label{local realization2}
		\text{div }(t^{1-2s}\nabla U_s)=0\qquad &\text{ in } \mathbb{R}^{n+1}_+\\
		U_s(x,0)=u_s &\text{ in } \mathbb{R}^{n}\\
		-\Delta u_s-d_s t^{1-2s}\partial_t U_s|_{t\rightarrow 0}+u_s=	u_s^{p+1}
		&\text{ on }  \mathbb{R}^{n}.
	\end{cases}
\end{equation}
%Thus, for every $U\in\mathscr{H}_0^1(\mathbb{R}^{n+1}_+;\, t^{1-2s})$ we define
%\[ \mathcal{J}_s(U):=\frac{d_s}{2}\int_{\mathbb{R}^{n+1}_+}|\nabla U|^2t^{1-2s}\, dxdt+\frac{1}{2}\int_{\mathbb{R}^{n}}|\nabla u|^2\, dx+\frac{1}{2}\int_{\mathbb{R}^{n}}| u|^2\, dx-\frac{1}{p+2}\int_{\mathbb{R}^{n}}| u|^{p+2}\, dx. \]
%Here $u=$Tr$U$.

Moreover, we have
\begin{Lemma}\label{lemma vsdv}
	Assume that $u_s\in\mathcal{M}_s$, $U_s$ is the extension of $u_s$.
Let $\Psi\in \mathscr{H}_0^1(\mathbb{R}^{n+1}_+;\, t^{1-2s})$ be such that
\begin{equation}\label{bgbjk}
	d_s\int_{\mathbb{R}^{n+1}_+}\nabla U_s\cdot \nabla \Psi t^{1-2s}\, dxdt+\int_{{\mathbb{R}^n}}\nabla u_s\cdot \nabla\psi\,dx+\int_{{\mathbb{R}^n}}u_s\, \psi\, dx=0,
\end{equation}
where $\psi=$Tr$\Psi$. Then,
\begin{equation}\label{bjkj}
	d_s\int_{\mathbb{R}^{n+1}_+}|\nabla \Psi|^2 t^{1-2s}\, dxdt+\int_{{\mathbb{R}^n}} |\nabla\psi|^2\,dx+\int_{{\mathbb{R}^n}} |\psi|^2\, dx-(p+1)\int_{{\mathbb{R}^n}}u_s^p|\psi|^2\, dx\geqslant0.
\end{equation}
\end{Lemma}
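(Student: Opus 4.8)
To establish Lemma~\ref{lemma vsdv}, the plan is to read the inequality \eqref{bjkj} off the minimality of $U_s$ for $\mathscr{J}_s$ recorded in \eqref{dfwedf}: since $U_s$ attains $\lambda_s=\inf\mathscr{J}_s$ over $\mathscr{H}_0^1(\mathbb{R}^{n+1}_+;\,t^{1-2s})$ and hypothesis \eqref{bgbjk} says precisely that $\Psi$ points in a direction along which the first variation of $\mathscr{J}_s$ at $U_s$ vanishes, the assertion \eqref{bjkj} is exactly the nonnegativity of the second variation of $\mathscr{J}_s$ at $U_s$ in the direction $\Psi$.

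To make this precise, first I would write the numerator of $\mathscr{J}_s$ as a quadratic form on $\mathscr{H}_0^1(\mathbb{R}^{n+1}_+;\,t^{1-2s})$,
\[
\mathcal{Q}(\Phi):=d_s\int_{\mathbb{R}^{n+1}_+}|\nabla\Phi|^2\,t^{1-2s}\,dxdt+\int_{\mathbb{R}^n}\bigl(|\nabla\varphi|^2+|\varphi|^2\bigr)\,dx,\qquad\varphi:=\mathrm{Tr}\,\Phi,
\]
so that $\mathscr{J}_s(\Phi)=\mathcal{Q}(\Phi)/\|\varphi\|_{L^{p+2}(\mathbb{R}^n)}^2$, and, $\mathcal{Q}$ being quadratic with associated bilinear form $\mathcal{Q}(\cdot,\cdot)$,
\[
\mathcal{Q}(U_s+t\Psi)=\mathcal{Q}(U_s)+2t\,\mathcal{Q}(U_s,\Psi)+t^2\,\mathcal{Q}(\Psi)\qquad\text{for all }t\in\mathbb{R}.
\]
Testing the weak formulation of \eqref{local realization2} against $\Psi$ (equivalently, using \eqref{cnvisdhvv} at $u=u_s$ together with \eqref{weak solution}) yields $\mathcal{Q}(U_s,\Psi)=\int_{\mathbb{R}^n}u_s^{p+1}\psi\,dx$; hence the standing assumption \eqref{bgbjk} is precisely $\mathcal{Q}(U_s,\Psi)=0$, and it is simultaneously equivalent to $\int_{\mathbb{R}^n}u_s^{p+1}\psi\,dx=0$. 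I would also record the elementary identities $\mathcal{Q}(U_s)=\|u_s\|_s^2=\|u_s\|_{L^{p+2}(\mathbb{R}^n)}^{p+2}$ (testing \eqref{main equation} against $u_s$) and $\lambda_s=\mathscr{J}_s(U_s)=\|u_s\|_{L^{p+2}(\mathbb{R}^n)}^{p}$; they are what ultimately makes the constant in \eqref{bjkj} equal to $1$.

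Next, since $u_s\in L^\infty(\mathbb{R}^n)$ by Lemma~\ref{lemma uniform bound} and $\psi\in H^1(\mathbb{R}^n)\hookrightarrow L^{p+2}(\mathbb{R}^n)$, the scalar function $t\mapsto\int_{\mathbb{R}^n}|u_s+t\psi|^{p+2}\,dx$ is of class $C^2$ near $t=0$ (the $L^\infty$-bound on $u_s$ provides an integrable domination of the first and second $t$-derivatives of the integrand), and its first derivative at $0$ equals $(p+2)\int_{\mathbb{R}^n}u_s^{p+1}\psi\,dx=0$ by the previous paragraph; a short computation then gives
\[
\|u_s+t\psi\|_{L^{p+2}(\mathbb{R}^n)}^2=\|u_s\|_{L^{p+2}(\mathbb{R}^n)}^2+t^2\,(p+1)\,\|u_s\|_{L^{p+2}(\mathbb{R}^n)}^{-p}\int_{\mathbb{R}^n}u_s^p\psi^2\,dx+o(t^2)\quad(t\to0).
\]
Inserting this and the exact expansion of $\mathcal{Q}(U_s+t\Psi)$ into the minimality inequality $\mathcal{Q}(U_s+t\Psi)\geqslant\lambda_s\|u_s+t\psi\|_{L^{p+2}(\mathbb{R}^n)}^2$, the constant and linear terms in $t$ cancel because $\mathcal{Q}(U_s,\Psi)=0$, $\mathcal{Q}(U_s)=\lambda_s\|u_s\|_{L^{p+2}(\mathbb{R}^n)}^2$, and the $L^{p+2}$-norm has vanishing first derivative; dividing the remainder by $t^2$ and letting $t\to0$ then gives
\[
\mathcal{Q}(\Psi)\ \geqslant\ \lambda_s\,(p+1)\,\|u_s\|_{L^{p+2}(\mathbb{R}^n)}^{-p}\int_{\mathbb{R}^n}u_s^p\psi^2\,dx .
\]
Finally, substituting $\lambda_s=\|u_s\|_{L^{p+2}(\mathbb{R}^n)}^{p}$ turns the prefactor into $1$, which is exactly \eqref{bjkj}.

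The proof is essentially bookkeeping, so I do not expect a genuine obstacle. The two points needing a little care are: (a) recognizing that \eqref{bgbjk} encodes at once the vanishing of the cross term $\mathcal{Q}(U_s,\Psi)$ in the numerator and of the first-order term $\int_{\mathbb{R}^n}u_s^{p+1}\psi$ of the $L^{p+2}$-norm, both being read off the Euler--Lagrange equation \eqref{local realization2}; and (b) the $C^2$-expansion of $t\mapsto\|u_s+t\psi\|_{L^{p+2}(\mathbb{R}^n)}^{2}$, where the uniform bound $u_s\in L^\infty(\mathbb{R}^n)$ from Lemma~\ref{lemma uniform bound} is what legitimizes differentiating under the integral. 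In short, \eqref{bjkj} expresses the familiar fact that a constrained minimizer has nonnegative second variation along admissible directions.
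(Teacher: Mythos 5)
Your proof is correct and follows essentially the same route as the paper: expand the Rayleigh quotient $\mathscr{J}_s$ about the minimizer $U_s$ in the direction $\Psi$, use \eqref{bgbjk} together with the Euler--Lagrange equation \eqref{local realization2} (which converts it into $\int_{\mathbb{R}^n}u_s^{p+1}\psi\,dx=0$) to kill the first-order terms, and read \eqref{bjkj} off the nonnegativity of the second variation. You spell out the identity $\mathcal{Q}(U_s,\Psi)=\int_{\mathbb{R}^n}u_s^{p+1}\psi\,dx$ and work with $\mathcal{Q}(U_s+t\Psi)\geqslant\lambda_s\|u_s+t\psi\|^2_{L^{p+2}(\mathbb{R}^n)}$ rather than the quotient directly, but the mechanism and the bookkeeping are the same as in the paper's proof.
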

\begin{proof}
Since $U_s$ minimizers \eqref{dfwedf}, for any $\epsilon>0$, one has that
\begin{equation}\label{sdgsrb}
	 \mathscr{J}_{s}(U_s+\epsilon\Psi)\geqslant\mathscr{J}_s(U_s).
\end{equation}
Owing to~\eqref{bgbjk},  we see that
\begin{equation}\label{dfwev}
	\begin{split}
		&d_s\int_{\mathbb{R}^{n+1}_+}|\nabla U_s+\epsilon\nabla \Psi|^2t^{1-2s}dxdt+\int_{\mathbb{R}^{n}}|\nabla u_s+\epsilon\nabla \psi|^2\, dx+\int_{{\mathbb{R}^n}}|u_s+\epsilon \psi|^2\, dx\\
		=&d_s\int_{\mathbb{R}^{n+1}_+}|\nabla U_s|^2t^{1-2s}dxdt+\int_{\mathbb{R}^{n}}|\nabla u_s|^2\, dx+\int_{{\mathbb{R}^n}}|u_s|^2\, dx\\
		&\qquad +\epsilon^2\left(d_s\int_{\mathbb{R}^{n+1}_+}|\nabla \Psi|^2t^{1-2s}dxdt+\int_{\mathbb{R}^{n}}|\nabla \psi|^2\, dx+\int_{{\mathbb{R}^n}}|\psi|^2\, dx\right).
	\end{split}
\end{equation}
By combining a Taylor expansion and \eqref{bgbjk},  we deduce that 
\begin{equation}
	\int_{\mathbb{R}^n}|\epsilon \psi+u_s|^{p+2}\, dx=\int_{\mathbb{R}^n}|u_s|^{p+2}\,dx +\frac{\epsilon^2(p+2)(p+1)}{2}\int_{\mathbb{R}^n}|u_s|^{p}\psi^2\, dx+O(\epsilon^3).
\end{equation}
Utilizing again the Taylor expansion, one can obtain that
\begin{equation}\label{dgav}
	\left(\int_{\mathbb{R}^n}|\epsilon \psi+u_s|^{p+2}\, dx\right)^{\frac{2}{p+2}}= \|u_s\|^{2}_{L^{p+2}(\mathbb{R}^n)}\left(1+\epsilon^2(p+1)\frac{\int_{{\mathbb{R}^n}}|u_s|^{p}\psi^2\, dx}{\|u_s\|^{p+2}_{L^{p+2}(\mathbb{R}^n)}}+O(\epsilon^3)\right).
\end{equation}
Recalling the fact that $u_s\in\mathcal{M}_s$, by~\eqref{cnvisdhvv}, we know that $$\|u_s\|^{p+2}_{L^{p+2}(\mathbb{R}^n)}=\|u_s\|_{s}^2=d_s\int_{\mathbb{R}^{n+1}_+}|\nabla U_s|^2t^{1-2s}dxdt+\int_{\mathbb{R}^{n}}|\nabla u_s|^2\, dx+\int_{{\mathbb{R}^n}}|u_s|^2\, dx.$$
Thus,	by inserting \eqref{dfwev} and \eqref{dgav} into \eqref{sdgsrb}, one has that 
\begin{equation}
	\begin{split}
		0&\leqslant	\epsilon^2\bigg(d_s\int_{\mathbb{R}^{n+1}_+}|\nabla \Psi|^2t^{1-2s}dxdt+\int_{\mathbb{R}^{n}}|\nabla \psi|^2\, dx+\int_{{\mathbb{R}^n}}|\psi|^2\, dx\\
		&\qquad \qquad-(p+1)\int_{\mathbb{R}^n}|u_s|^{p}\psi^2\, dx+O(\epsilon)\bigg).
	\end{split}
\end{equation}
This implies the desired result~\eqref{bjkj}.
\end{proof}

\subsubsection{Nondegenercy when $n\geqslant 2$}\label{spherical harmonics}
We now consider the decomposition in terms of spherical harmonics
 \begin{equation}
 	L^2(\mathbb{R}^n)=\bigoplus\limits_{k\geqslant0} \left(L^2(\mathbb{R}_+; r^{n-1})\otimes \mathcal{Y}_k\right).
 \end{equation}
Here  $\mathcal{Y}_k=$span$\left\{Y_{k}^i\right\}_{i\in m_k}$ denotes the space of  spherical harmonics of degree $k$  in space dimension $n$.  Note that the index set $m_k$ depends on $k$ and $n$. It is known that $m_0=1$ and $m_1=n$.
  Recalling also that for $n\geqslant 2$,
 \begin{equation}\label{dvsdv}
 	-\Delta_{S^{n-1}} Y_k^i =\lambda_{k} Y_k^i,
 \end{equation}
 where $\lambda_k=k(k+n-2)$, and 
\begin{equation}\label{ scvdsvcds}
	\int_{S^{n-1}} Y_{k}^i Y_{l}^j=0\qquad \text{for any }(k,i)\neq(l,j).
\end{equation}
  In addition, $Y_0$ is a constant, while 
 \begin{equation}
 	Y_{1}^i=\frac{x_i}{|x|},\qquad \text{ for } i=1,\dots,n.
 \end{equation}

 Let $W\in\mathscr{H}_0^1(\mathbb{R}^{n+1}_+;\, t^{1-2s})$, we decompose $W$ in the spherical harmonics and obtain that 
\begin{equation}\label{bdivsd}
	W(x,t)=\sum_{k\in\mathbb{N}}\sum_{i=1}^{m_k} G_k^i(|x|,t)Y_{k}^i\left(\frac{x}{|x|}\right),
\end{equation}
and 
\begin{equation}\label{fsbbf}
	\int_{S^{n-1}}|W(r\theta,t)|^2\, d\sigma(\theta)=\sum_{k\in\mathbb{N}}\sum_{i=1}^{m_k} \left|G_k^i(r,t)\right|^2.
\end{equation}
Moreover, owing to~\eqref{dvsdv},~\eqref{ scvdsvcds} and~\eqref{bdivsd}, we infer that 
\begin{equation}
	\begin{split}
	\int_{S^{n-1}}|\nabla_{\theta} W(r\theta, t)|^2\, d\sigma(\theta)=&\sum_{k\in\mathbb{N}}\sum_{i=1}^{m_k}\int_{S^{n-1}}|G_k^i(r,t)|^2|\nabla_{\theta}Y_k^i(\theta)|^2\, d\sigma(\theta)\\
		=&\sum_{k\in\mathbb{N}}\sum_{i=1}^{m_k} \lambda_k \left|G_k^i(r,t)\right|^2.
	\end{split}
\end{equation}
From this, we have that 
\begin{equation}\label{dfsdv}
	\begin{split}
		\int_{{\mathbb{R}^{n+1}_+}}t^{1-2s}|\nabla W(x,t)|^2\,dxdt=&\int_{\mathbb{R}^2_{++}}t^{1-2s}r^{n-1}\int_{S^{n-1}}|\partial_{r}W|^2+|\partial_{t} W|^2+r^{-2}|\nabla_{\theta}W|^2\, d\sigma(\theta)\,drdt\\
	=&\sum_{k\in\mathbb{N}}\sum_{i=1}^{m_k}\int_{\mathbb{R}^2_{++}}t^{1-2s} r^{n-1}\left( \left|\nabla G_k^i(r,t)\right|^2+ r^{-2}\lambda_k \left|G_k^i(r,t)\right|^2\right)\, dtdr.
	\end{split}
\end{equation}
%Furthermore, observing that 
%\begin{equation}\label{ df fdfd}
%	G_k^i(r,t)=\int_{S^{n-1}}W(r,t)Y_{k}^i(\theta)\, d\sigma(\theta).
%\end{equation}
It follows that $G_i^k\in H^1(\mathbb{R}^2_{++}; t^{1-2s}r^{n-1})\cap L^2(\mathbb{R}^2_{++}; t^{1-2s}r^{n-3})$ for any $k\in\mathbb{N}$ and $i=1,\dots,m_k$. 

Recalling~\eqref{bdivsd}, by taking into account the fact that  $W(x,0)\in H^1(\mathbb{R}^n)$, we see that the trace $g_k^i$ on $\mathbb{R}_+$ of $G_k^i$ belongs to $ H^1(\mathbb{R}_{+};\, r^{n-1})\cap L^2(\mathbb{R}_{+};\, r^{n-3}) $.

Thus, let us denote  
	\[ \mathcal{G}:=\left\{G\in H^1(\mathbb{R}^2_{++} t^{1-2s}r^{n-1}):\, G\in L^2(\mathbb{R}^2_{++}; t^{1-2s}r^{n-3}),\text{ Tr} G \in H^1(\mathbb{R}_{+};\, r^{n-1})\cap L^2(\mathbb{R}_{+};\, r^{n-3})
	\right\}.  \]

Now we are ready to prove our nondegeneracy result for $s\in(0,1)$.
\begin{proof}[Proof of Theorem~\ref{th:nondegeneracy} $(n\geqslant2)$]
Let $u_s\in\mathcal{M}_s$ and $w\in \text{Ker}\,L_s$, one has that 
\begin{equation}
	-\Delta w+(-\Delta)^s w+w=(p+1)u_s^pw \quad \text{in }\mathbb{R}^n.
\end{equation}
Let $W\in 	\mathscr{H}_0^1(\mathbb{R}^{n+1}_+;\, t^{1-2s})$ be the extension of $w$, which satisfies for all $\Psi\in 	\mathscr{H}_0^1(\mathbb{R}^{n+1}_+;\, t^{1-2s}) $
\begin{equation}\label{fvfav}
	d_s\int_{\mathbb{R}^{n+1}_+}\nabla W \cdot\nabla \Psi t^{1-2s}\, dxdt+\int_{{\mathbb{R}^n}} \nabla w\cdot\nabla\psi\,dx+\int_{{\mathbb{R}^n}} w\psi\, dx=(p+1)\int_{{\mathbb{R}^n}}u_s^pw\psi\, dx
\end{equation}
where $\psi$ is the trace of  $\Psi$.
Using the decomposition in terms of spherical harmonics, one has that 
\begin{equation}\label{jvdfvdf}
	W(x,t)=\sum_{k\in\mathbb{N}}\sum_{i=1}^{m_k} G_k^i(|x|,t)Y_{k}^i\left(\frac{x}{|x|}\right)
\end{equation}
where $G_k^i\in\mathcal{G} $.
Taking $F\in \mathcal{G},$ by testing~\eqref{fvfav} against function $\Psi=F(t,|x|)Y_k^i\in\mathscr{H}_0^1(\mathbb{R}^{n+1}_+;\, t^{1-2s})$ and using polar coordinates, one has that, for any $k\in\mathbb{N}$, and $i=1,\dots,m_k$,
\begin{equation}
	\begin{split}\label{vfvsd}
	I_k(G_k^i,F):&=d_s\int_{\mathbb{R}^{2}_{++}}t^{1-2s}r^{n-1}\nabla G_k^i\cdot\nabla F\, drdt+d_s\lambda_k\int_{\mathbb{R}^{2}_{++}}t^{1-2s}r^{n-3}{G^i_k F}\, drdt\\
	&\quad +\int_{{\mathbb{R}_+}}r^{n-1}\partial_r g_{k}^i\cdot\partial_rf\, dr+\lambda_k\int_{{\mathbb{R}_+}}r^{n-3} g_{k}^if\, dr+\int_{{\mathbb{R}_+}}r^{n-1} g_{k}^if\, dr\\
	&\quad -(p+1)\int_{{\mathbb{R}_+}}r^{n-1} u_s^pg_{k}^if\, dr=0
	\end{split}
\end{equation}
where $f$ is the trace of $F$ on $\mathbb{R}_+$.

Moreover, since $U_s$ is radial in $x$ variable, we notice that for any $G\in\mathcal{G}$ and $i=1,\dots,n$, $\Phi(x):=G(|x|,t)Y_1^i$ satisfies~\eqref{bgbjk} by odd symmetry. Hence, Lemma~\ref{lemma vsdv} implies that 
	\begin{equation}\label{vdsdfsd}
	\begin{split}
		I_1(G,G):&=d_s\int_{\mathbb{R}^{2}_{++}}\left|\nabla G\right|^2t^{1-2s}r^{n-1}drdt+(n-1)d_s\int_{\mathbb{R}^{2}_{++}}G^2t^{1-2s}r^{n-3}drdt\\
		&\qquad +\int_{{\mathbb{R}_+}}\partial^2_{r}gr^{n-1}\, dr+(n-1)\int_{{\mathbb{R}_+}}g^2r^{n-3}\, dr+\int_{{\mathbb{R}_+}}g^2r^{n-1}\,  dr\\
		&\qquad -(p+1)\int_{{\mathbb{R}_+}}u_s^{p}g^2r^{n-1}\,dr\geqslant 0.
	\end{split}
\end{equation}
Here $g:=$Tr $G$. Recalling the fact that $\lambda_k>n-1$ for $k\geqslant 2$, by combining~\eqref{vfvsd} with~\eqref{vdsdfsd},  we obtain that 
\begin{equation}
	\begin{split}
	0=	I_k(G_k^i,G_k^i)&=I_1(G_k^i,G_k^i)+(\lambda_k-(n-1))d_s\int_{\mathbb{R}^{2}_{++}}t^{1-2s}r^{n-3}{|G^i_k|^2}\, drdt\\
		& \qquad +(\lambda_k-(n-1))\int_{{\mathbb{R}_+}}r^{n-3} |g_{k}^i|^2\, dr\geqslant 0.
	\end{split}
\end{equation}
As a consequence, $G_k^i=0$ and $g_k^i=0$ for every $k\geqslant 2$ and $i=1,\dots,m_k$. From this, \eqref{jvdfvdf} can be rewritten as
\begin{equation}\label{vdsvdsdv}
	W(x,t)=G_0^1(|x|,t)+\sum_{i=1}^{n} G_1^i(|x|,t)Y_{1}^i\left(\frac{x}{|x|}\right).
\end{equation}
Also, we notice that for any $i=1,\dots,n$, 
\begin{equation}
	G_1^i(r,t)=\int_{S^{n-1}}W(r\theta,t)Y_{1}^i(\theta)\, d\sigma(\theta).
\end{equation}
By \eqref{vfvsd}, for every $F\in\mathcal{G}$, one has that
\begin{equation}
	\begin{split}\label{sdvsdv}
		I_1(G_1^i,F):&=d_s\int_{\mathbb{R}^{2}_{++}}t^{1-2s}r^{n-1}\nabla G_1^i\cdot\nabla F\, drdt+d_s(n-1)\int_{\mathbb{R}^{2}_{++}}t^{1-2s}r^{n-3}{G^i_1 F}\, drdt\\
		&\quad +\int_{{\mathbb{R}_+}}r^{n-1}\partial_r g_{1}^i\cdot\partial_rf\, dr+(n-1)\int_{{\mathbb{R}_+}}r^{n-3} g_{1}^if\, dr+\int_{{\mathbb{R}_+}}r^{n-1} g_{1}^if\, dr\\
		&\quad -(p+1)\int_{{\mathbb{R}_+}}r^{n-1} u_s^pg_{1}^if\, dr=0.
	\end{split}
\end{equation}

Moreover, since $U_s$ is radial in $x$ variable, we denote $\bar{U}_s(|x|,t)=U_s(x,t)$ and $\bar{u}_s(|x|)=u_s(x)$,  then we have that 
\begin{equation}
	\begin{cases}
		\text{div }(t^{1-2s}r^{n-1}\nabla \bar{U}_s)=0\quad &\text{ in } \mathbb{R}^{2}_{++}\\
		 \left(-\partial_{rr}\bar{u}_s-\frac{n-1}{r}\partial_r\bar{u}_s\right)r^{n-1}-d_s t^{1-2s}r^{n-1}\partial_t \bar{U}_s|_{t\rightarrow 0}+r^{n-1}\bar{u}_s=	r^{n-1}\bar{u}_s^{p+1}
		&\text{ on }  \mathbb{R}_+.
	\end{cases}
\end{equation}
 We now differentiate the above equation with respect to $r$ and denote $V_s:=\partial_{r}\bar{U}_s$ and $v_s:=\partial_{r}\bar{u}_s$. Then, one has that 
\begin{equation}
	\begin{cases}\label{vsdf}
		\text{div }(t^{1-2s}r^{n-1}\nabla V_s)=(n-1)r^{n-3}t^{1-2s}V_s \qquad&\text{ in } \mathbb{R}^{2}_{++}\\
		\left(-\partial_{rr}v_s-\frac{n-1}{r}\partial_rv_s+\frac{n-1}{r^2}v_s\right)r^{n-1}-d_s t^{1-2s}r^{n-1}\partial_t V_s|_{t\rightarrow 0}+r^{n-1}{v}_s=(p+1)	r^{n-1}\bar{u}_s^{p}v_s
		&\text{ on }  \mathbb{R}_+.
	\end{cases}
\end{equation}
It is known that $V_s=\partial_{r}\bar{U}_s$ doesn't change sign. Thus, we might as well assume that $V_s<0$ on $\mathbb{R}^2_{++}$ since the Maximum principle for elliptic equations.

We claim that  for every $i=1,\dots,n$
\begin{equation}\label{sdgsvd}
	G_1^i\equiv c^iV_s
\end{equation}
for some $c^i\in\mathbb{R}$. Indeed, for given $\Phi\in C^\infty_c(\mathbb{R}^2_{++}\cup \left\{t=0\right\})$, there exists $\Psi\in \mathcal{G}$ such that 
\[ \Phi={\Psi}{V_s}. \]
Thus, one has that  
\begin{equation}
	\begin{split}
		\int_{\mathbb{R}^2_{++}}t^{1-2s}r^{n-1}|\nabla\Phi|^2\, drdt&=\int_{\mathbb{R}^2_{++}}t^{1-2s}r^{n-1}|V_s\nabla\Psi|^2\, drdt+ \int_{\mathbb{R}^2_{++}}t^{1-2s}r^{n-1}\nabla V_s\cdot\nabla(V_s\Psi^2)\, drdt.
	\end{split}
\end{equation}
By testing~\eqref{vsdf} against $V_s\Psi^2$, we infer that 
\begin{equation}\label{dfve}
	\begin{split}
	d_s\int_{\mathbb{R}^2_{++}}t^{1-2s}r^{n-1}|V_s\nabla\Psi|^2\, drdt&=
		d_s\int_{\mathbb{R}^2_{++}}t^{1-2s}r^{n-1}\left(|\nabla\Phi|^2+(n-1)r^{-2}|\Phi|^2\right)\, drdt\\
		&\qquad+ \int_{\mathbb{R}_{+}}r^{n-1}\phi^2\, drdt-(p+1)\int_{\mathbb{R}_{+}}r^{n-1}u^p_s\phi^2\, drdt\\
		&\qquad+\int_{\mathbb{R}_{+}}r^{n-1}\left(\partial_r(v_s\psi^2)\partial_r v_s+(n-1)\phi^2r^{-2}\right)\, drdt.
	\end{split}
\end{equation}
Here $\phi:=$Tr$\Phi$, $v_s:=$Tr$V_s$ and  $\psi:=$Tr$\Psi$. In particular, we notice that  
\begin{equation}
	|\partial_{r}\phi|^2=|\partial_r(v_s\psi)|^2=\partial_r(v_s\psi^2)\partial_r v_s+v_s^2\partial_r^2\psi.
\end{equation}
From this, recalling~\eqref{vdsdfsd}, one has that
\begin{equation}
		d_s\int_{\mathbb{R}^2_{++}}t^{1-2s}r^{n-1}|V_s\nabla\Psi|^2\, drdt\leqslant I_1(\Phi,\Phi).
\end{equation}
Since $C^\infty_c(\mathbb{R}^2_{++}\cup \left\{t=0\right\})$ is dense in $\mathcal{G}$, owing to~\eqref{sdvsdv}, we infer that
\begin{equation}
	d_s\int_{\mathbb{R}^2_{++}}t^{1-2s}r^{n-1}|V_s\nabla(G_1^i V_s^{-1})|^2\, drdt\leqslant I_1(G_1^i,G_1^i)=0.
\end{equation}
This implies the desired result~\eqref{sdgsvd}.

As a result, we conclude that  $G_i^i(|x|,0)=c^i\partial_r\bar{U}_s(|x|,0)=c^i\partial_r\bar{u}_s(|x|)$ for all $x\in\mathbb{R}^n$. Recalling~\eqref{vdsvdsdv}, for every $w\in \text{Ker}\,L_s$, we obtain that
\begin{equation}
	w(x)=W(x,0)=G_0^1(|x|,0)+\sum_{i=1}^{n} G_1^i(|x|,0)\frac{x_i}{|x|}=G_0^1(|x|,0)+c^i\sum_{i=1}^{n} \partial_iu_s(x)
\end{equation}
From Lemmata~\ref{lemma nondegeneracy of radial sector} and~\ref{lemma nondegeneracy of radial sector s1}, we know that $G_0^1(|x|,0)=0$. This completes the proof of Theorem~\ref{th:nondegeneracy}.
\end{proof}

\section{Uniqueness}\label{sec:uniqueness}
Our goal this section is to prove Theorem~\ref{th:uniqueness}  by constructing a set of pseudo-minimizers. Based on the nondegeneracy result of Theorem~\ref{th:nondegeneracy}, we can now closely follow the approach provided in \cite{MR3207007}.

To begin with, we need to introduce the following fundamental estimate.   Let us  denote by $\bot_s$ the orthogonality relation in $H^{1}(\mathbb{R}^n)$.

\begin{Lemma}\label{lemma cnidnc}
Let $u_s\in\mathcal{M}_s$ and  $\mathcal{N}_s:=\left\{\varphi\in H^1_{\rm rad}(\mathbb{R}^n):\, \varphi\bot_s u_s \right\}$. 
We denote 
\begin{equation}\label{vdsbvsdv}
	\mathcal{J}_{u_s}^s(\varphi,v):=\frac{\left\langle L_s\varphi,v \right\rangle_{L^2}}{\left\langle \varphi,v \right\rangle_{s}}\quad \forall \varphi,v\in H^1_{\rm rad}(\mathbb{R}^n),
\end{equation}
and
\begin{equation}\label{csbdg}
	\Pi(s,u_s):=\inf\limits_{\mathcal{N}_s\backslash\left\{0\right\}} {\mathcal{J}_{u_s}^s(\varphi,\varphi)}.
\end{equation}
Then, there exist  $s_0, s_1\in(0,1)$ such  that  \begin{equation}
	\inf\limits_{s\in[0,s_0]\cup[s_1,1]}\inf\limits_{u_s\in \mathcal{M}_s} \Pi(s,u_s)>0.
\end{equation}
\end{Lemma}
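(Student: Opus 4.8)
The plan is to establish the uniform positivity of $\Pi(s,u_s)$ by a compactness–contradiction argument, exactly as was done for the radial kernel triviality in Lemmata~\ref{lemma nondegeneracy of radial sector} and~\ref{lemma nondegeneracy of radial sector s1}, combined with quantitative information coming from the nondegeneracy result (Theorem~\ref{th:nondegeneracy}) and the known gap structure of the limiting operators $L_0,L_1$ in the radial sector. First I would observe that $\Pi(s,u_s)$ is well-defined and finite: by Lemma~\ref{lemma lambda1}, Lemma~\ref{lemma uniform bound}, and the form bound $L_s \geqslant -\,(p+1)\Lambda_0^p + 1 - \Delta$, the Rayleigh-type quotient $\mathcal{J}^s_{u_s}(\varphi,\varphi)$ is bounded below on $H^1_{\rm rad}$, and testing against $\varphi \bot_s u_s$ shows the infimum is attained (or approached) on a bounded set in $H^1$. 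One must also note that $\Pi(s,u_s)$ is precisely the bottom of the spectrum of $L_s$ restricted to the radial subspace orthogonal to $u_s$ (after the usual identification of the inner products $\langle\cdot,\cdot\rangle_s$ with $L^2$ via the self-adjointness of $L_s$), so positivity of $\Pi$ is equivalent to a spectral gap above the eigenvalue associated with $u_s$ itself.

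Next I would argue by contradiction: suppose there are sequences $s_k \to \gamma \in \{0,1\}$, $u_{s_k} \in \mathcal{M}_{s_k}$, and $\varphi_k \in \mathcal{N}_{s_k}$ with $\|\varphi_k\|_{s_k} = 1$ and $\mathcal{J}^{s_k}_{u_{s_k}}(\varphi_k,\varphi_k) \to \ell \leqslant 0$. By Lemma~\ref{lemma convergence}, along a subsequence $u_{s_k} \to u_\gamma \in \mathcal{M}_\gamma$ in $H^2(\mathbb{R}^n)$ (hence in $L^\infty$, using Lemma~\ref{lemma uniform bound} and the uniform decay estimate~\eqref{dgsdbf}). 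The functions $\varphi_k$ are bounded in $H^1$, so $\varphi_k \rightharpoonup \varphi$ weakly in $H^1_{\rm rad}$, and the same tail-truncation / cutoff argument used in Lemma~\ref{lemma nondegeneracy of radial sector} — testing the associated equation $L_{s_k}\varphi_k = \mu_k \varphi_k$ (in the $\langle\cdot,\cdot\rangle_{s_k}$ sense, with $\mu_k \to \ell$) against $\phi^2 \varphi_k$, using~\cite[Lemma 2.5]{MR4670033} and~\eqref{c_n,s} for $\gamma=1$, or the analogous estimate via Lemma~\ref{Lemma gamma=0} for $\gamma=0$, together with the uniform decay of $u_{s_k}^p$ — yields $\varphi_k \to \varphi$ strongly in $L^2(\mathbb{R}^n)$, hence in $L^{p+2}(\mathbb{R}^n)$ by interpolation and the uniform $L^q$-bound from Sobolev embedding. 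Passing to the limit in the weak formulation, using~\cite[Lemma~2.4]{MR3207007} (for $\gamma=1$) or Lemma~\ref{Lemma gamma=0} (for $\gamma=0$) to handle the convergence $(-\Delta)^{s_k}\varphi \to (-\Delta)^\gamma \varphi$, one gets $L_\gamma \varphi = \ell\,\varphi$ with $\varphi$ radial, $\varphi \bot_\gamma u_\gamma$, and — crucially — $\|\varphi\|_\gamma = 1$ by the strong convergence, so $\varphi \neq 0$.

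Now the contradiction: $L_\gamma$ restricted to $L^2_{\rm rad}(\mathbb{R}^n)$ has, by the uniqueness and nondegeneracy of $u_\gamma$ in the local case (the facts recalled in~\eqref{L1}, together with Proposition~\ref{proposition A.1} giving Morse index one and the Perron–Frobenius structure of the ground state), exactly one strictly negative eigenvalue, with one-dimensional eigenspace, and zero is \emph{not} in the spectrum on the radial sector orthogonal to that eigenfunction — equivalently, the second radial eigenvalue $\nu_\gamma := \inf\{\langle L_\gamma \varphi,\varphi\rangle_{L^2} : \varphi \in H^1_{\rm rad},\ \varphi \bot_\gamma \phi_\gamma,\ \|\varphi\|_{L^2}=1\}$ is strictly positive (here $\phi_\gamma$ is the negative-eigenvalue eigenfunction; note $u_\gamma$ itself spans a direction where $L_\gamma$ is degenerate only across the full kernel, which in the radial sector is trivial). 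Since our limiting $\varphi$ lies in $\mathcal{N}_\gamma$, i.e. $\varphi \bot_\gamma u_\gamma$, and satisfies $L_\gamma\varphi=\ell\varphi$ with $\ell \leqslant 0$, the only possibility is $\ell < 0$ and $\varphi$ proportional to $\phi_\gamma$; but then $\varphi$ would have to be orthogonal to $u_\gamma$ while also being the ground-state-type eigenfunction, and a short direct computation (differentiating $J_\gamma$ along the Nehari constraint, as in~\cite{MR3207007}) shows $\langle L_\gamma u_\gamma, u_\gamma\rangle_{L^2} < 0$ forces $\phi_\gamma$ and $u_\gamma$ to have nonzero overlap (both being positive, up to sign), contradicting $\varphi \bot_\gamma u_\gamma$. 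Hence $\ell > 0$, a contradiction; and then $\Pi(s,u_s) \geqslant c > 0$ uniformly for $s$ near $0$ and $1$, with $s_0, s_1$ the thresholds from Theorem~\ref{th:nondegeneracy}. I expect the main obstacle to be the bookkeeping in the limiting step — ensuring the \emph{strong} $L^2$-convergence of $\varphi_k$ (so that the limit is nonzero and the orthogonality $\bot_{s_k}$ passes to $\bot_\gamma$) uniformly as $s_k \to \gamma$, which is where the uniform decay estimates of Section~\ref{sec:Preliminaries} and Lemma~\ref{lemma uniform decay of spectrum} do the real work; the spectral-gap input at $\gamma \in \{0,1\}$ is then a classical fact about the local ground state.
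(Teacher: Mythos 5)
Your proposal follows the same broad shape as the paper's argument — assume $\Pi(s_k,u_{s_k})\to 0$ along $s_k\to\gamma\in\{0,1\}$, extract a limiting function, and contradict the nondegeneracy of $L_\gamma$ — but the route through the contradiction is genuinely different, and two of the steps you rely on are not yet justified.

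The paper does not try to show that the minimizing sequence converges strongly. Instead it runs Ekeland's variational principle plus the Riesz representation to extract \emph{approximate} critical points $\varphi_{k,m_k}$ (no need to show the infimum is attained), shows the \emph{weak} limit $\varphi$ satisfies $L_\gamma\varphi=0$ with $\varphi\in\mathcal{N}_\gamma$, invokes nondegeneracy to conclude $\varphi=0$, and then derives the final contradiction from a Rayleigh-quotient computation: when $\varphi_{k,m_k}\rightharpoonup 0$, the potential term $\int u_{s_k}^p\varphi_{k,m_k}^2\to 0$ (local $L^{p+2}$ convergence plus the uniform decay of $u_{s_k}$), so $\langle L_{s_k}\varphi_{k,m_k},\varphi_{k,m_k}\rangle_{L^2}\to 1$, contradicting $\to 0$. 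In other words, the paper's closing punch is ``escaping mass forces the quotient toward $1$,'' not a spectral gap at the limit.

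Your route instead claims the mass does \emph{not} escape — you assert strong $L^2$ convergence and a nonzero limit $\varphi$ with $\|\varphi\|_\gamma=1$, and then argue by a spectral gap of $L_\gamma$ on the radial sector. This can in principle be made to work, but there are real gaps. First, you should observe at the outset that $\Pi(s,u_s)\geqslant 0$ (this is exactly \eqref{ndsvnjsd}, proved in Proposition~\ref{proposition A.1}); then $\ell=0$ automatically, and your limiting equation is simply $L_\gamma\varphi=0$, immediately contradicting the triviality of the radial kernel — no need for the somewhat delicate overlap argument between $\phi_\gamma$ and $u_\gamma$, which as written is incomplete (positivity of both does not by itself give $\langle\phi_\gamma,u_\gamma\rangle_\gamma\neq 0$; one should instead pair $\phi_\gamma$ with the equation $(-\Delta+(-\Delta)^\gamma+1)u_\gamma=u_\gamma^{p+1}$). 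Second, you assume the infimum is attained and that the minimizer satisfies a Euler–Lagrange equation; this requires a compactness argument (the key point is that $\Pi(s_k,u_{s_k})<1$, the bottom of the essential spectrum, so attainment follows), and also one must check that the Lagrange multiplier for the constraint $\varphi\perp_s u_s$ vanishes (it does, because $\langle L_s\varphi,u_s\rangle_{L^2}=-p\langle\varphi,u_s^{p+1}\rangle_{L^2}=-p\langle\varphi,u_s\rangle_s=0$); the paper sidesteps all this with Ekeland. Third, and most importantly, ``strong $L^2$ convergence implies $\|\varphi\|_\gamma=1$'' is false as stated; you would need strong $H^1$ convergence, which is not established. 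To conclude $\varphi\neq 0$ one can instead pass to the limit in $(p+1)\int u_{s_k}^p\varphi_k^2=1-\mu_k\to 1$. With these repairs, your approach would deliver the lemma, but note that the paper's choice — proving the weak limit is zero and then using the ``quotient tends to $1$'' observation — is cleaner precisely because it avoids the need to rule out mass loss.
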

\begin{proof}
From \eqref{ndsvnjsd}, it follows that $ \mathcal{J}_{u_s}^s(\varphi,\varphi)\geqslant 0$ for every $\varphi\bot_s u_s$. Thus, we assume by contradiction  that  
 there exists a sequence $s_k\rightarrow\sigma$ with  $\sigma\in\left\{0,1\right\}$  and $u_{s_k}\in \mathcal{M}_{s_k}$ such  that
\begin{equation}\label{vsdfds}
	 \Pi(s_k,u_{s_k})\rightarrow 0\qquad \text{as } k\rightarrow \infty.
\end{equation}
Moreover, for fixed $k\in\mathbb{N}$, by the definition of $\mathcal{J}_{u_{s_k}}^{s_k}(\varphi,\varphi)$, we are in a position of applying  the Ekeland variational principle, obtaining that there exists a minimizing sequence  $\varphi_{k,m}\in \mathcal{N}_{s_k}$ for $\Pi(s_k,u_{s_k}) $ such that $\|\varphi_{k,m}\|_{s_k}=1$ for all $m\in\mathbb{N}$ and $\nabla{\mathcal{J}_{u_{s_k}}^{s_k}(\varphi_{k,m},\varphi_{k,m})}\rightarrow 0$. 
%
%In addition, let us denote 
%\begin{equation}\label{vfscdcsd}
%	\mathcal{I}^s_{u_s}(\varphi,v):=\left\langle \varphi,v \right\rangle_{s}-(p+1)\int_{{\mathbb{R}^n}}u_s^{p}\varphi\,v\, dx,\quad \forall \varphi,v\in H^1_{\rm rad}(\mathbb{R}^n).
%\end{equation}

Owing to the Riesz representation theorem, there exists $h_{k,m}\in\mathcal{N}_{s_k} $
 such that 
 \begin{equation}
 	\left\langle h_{k,m},v \right\rangle_{s_k}=\left\langle L_{s_k}\varphi_{k,m},v \right\rangle_{L^2}-\Pi(s_k,u_{s_k})\left\langle \varphi_{k,m},v \right\rangle_{s_k} \quad \forall v\in\mathcal{N}_{s_k}.
 \end{equation}
By taking into account the facts that $\left\langle L_{s_k}\varphi_{k,m},\varphi_{k,m} \right\rangle_{L^2}\rightarrow \Pi(s_k,u_k)$
 and  that
\begin{equation}
	\frac{1}{2}\left\langle \nabla{\mathcal{J}_{u_{s_k}}^{s_k}(\varphi_{k,m},\varphi_{k,m})},v\right\rangle_{s_k}=\left\langle L_{s_k}\varphi_{k,m},v \right\rangle_{L^2}-\left\langle L_{s_k}\varphi_{k,m},\varphi_{k,m} \right\rangle_{L^2}\left\langle \varphi_{k,m},v \right\rangle_{s_k}  
\end{equation}
for all $v\in  H^1_{\rm rad}(\mathbb{R}^n)$, we infer that $\|h_{k,m}\|_{s_k}\rightarrow 0$ as $k\rightarrow\infty.$

Thus, we can find a subsequence ${m_k}$ such that $\|h_{k,m_k}\|_{s_k}\rightarrow 0 $ as $k\rightarrow\infty$, and 
\begin{equation}\label{dvsdg}
	\left\langle h_{k,m_k},v \right\rangle_{s_k}=\left\langle L_{s_k}\varphi_{k,m_k},v \right\rangle_{L^2}-\Pi(s_k,u_{s_k})\left\langle \varphi_{k,m_k},v \right\rangle_{s_k} \quad \forall v\in\mathcal{N}_{s_k}.
\end{equation}

Assume that $w\in \mathcal{N}_{\sigma}\cap C^\infty_c(\mathbb{R}^n)$. Then, Lemma~\ref{lemma lambda1} implies that 
\[ \left\langle w,u_{s_k} \right\rangle_{s_k}\leqslant \|u_{s_k}\|_{s_k}\|w\|_{s_k}\leqslant C \|w\|_{\sigma}\]
for some positive constant $C$ independent of $s_k$ and $\sigma$.
Let us denote
 \[ v_{k}=w-\frac{\left\langle w,u_{s_k} \right\rangle_{s_k}}{\|u_{s_k}\|_{s_k}^2}u_{s_k}.  \]
From this, it follows that $v_k\in\mathcal{N}_{s_k}$ and $\|v_k\|_{s_k}\leqslant 4\|w\|_{\sigma}$.
Using $v_k$ as a test function in~\eqref{dvsdg}, on account of the facts that $\varphi_{k,m}\in \mathcal{N}_{s_k}$ and that $\|h_{k,m}\|_{s_k}\rightarrow 0$ as $k\rightarrow\infty$, we infer that 
\begin{equation}\label{fdvfsv}
	\left\langle L_{s_k}\varphi_{k,m_k},w \right\rangle_{L^2}-\Pi(s_k,u_{s_k})\left\langle \varphi_{k,m_k},w \right\rangle_{s_k}\rightarrow 0\quad \text{ as }k\rightarrow \infty.
\end{equation}

Since $\|\varphi_{k,m_k}\|_{s_k}=1$, we have $ \|\varphi_{k,m_k}\|_{\sigma}\leqslant 2 $ for all $k\in\mathbb{N}$. Thus, we may assume that up to a subsequence,  $\left\langle \varphi_{k,m_k},w \right\rangle_{\sigma} \rightarrow \left\langle \varphi,w \right\rangle_{\sigma}$ and by local Rellich compactness, we have that
\begin{equation}\label{fver}
	\varphi_{k,m_k}\rightarrow \varphi \qquad \text{strongly in } L^{p+2}_{\rm loc}(\mathbb{R}^n).
\end{equation}

 We now claim that $\varphi=0$. 
  The proof is divided into two steps, as follows:

  Step 1. We prove that \begin{equation}\label{cghgu}
 	\left\langle L_{\sigma}\varphi,w \right\rangle_{L^2}= 0\qquad \forall w\in \mathcal{N}_{\sigma}\cap C^\infty_c(\mathbb{R}^n).
 \end{equation}
Indeed, it is immediate to check that
\begin{equation}\label{fwefwe}
	\begin{split}
		\left|\left\langle \varphi_{k,m_k},w \right\rangle_{s_k}-\left\langle \varphi,w \right\rangle_{\sigma}\right|&\leqslant \left|\left\langle \varphi_{k,m_k}-\varphi,w \right\rangle_{\sigma}\right|+\left|\left\langle \varphi_{k,m_k},w \right\rangle_{s_k}-\left\langle \varphi_{k,m_k},w \right\rangle_{\sigma}\right|\\
		&\leqslant o(1)+|s_k-\sigma|\,  \|\varphi_{k,m_k}\|_{L^2(\mathbb{R}^n)}\|w\|_{3}.
	\end{split}
\end{equation}
Moreover, 
from Lemma~\ref{lemma convergence}, there exists $u_\sigma\in\mathcal{M}_\sigma$ such that 
\begin{equation}\label{vdsfs}
	\|u_{s_k}-u_\sigma\|_{2\sigma}\rightarrow 0\quad \text{as } k\rightarrow \infty.
\end{equation}
Thus, recalling~\eqref{dgsdbf},  we observe that, for every $R>0$ large enough,
\begin{equation}\label{dsvdsv}
	\begin{split}
	\left|\int_{{\mathbb{R}^n}}(u_{s_k}^{p}\varphi_{k,m_k}-u_\sigma^{p}\varphi)\,w\, dx\right|& \leqslant\int_{{\mathbb{R}^{n}}}|u_{s_k}^p-u^p_\sigma|\,|\varphi_{k,m_k}|\,|w|\, dx+\int_{{\mathbb{R}^{n}}}u_\sigma^p\,|\varphi_{k,m_k}-\varphi|\,|w|\, dx\\ &\leqslant o(1)+C\left(\|\varphi_{k,m_k}-\varphi\|_{L^{p+2}(B_R)}+R^{-np}\right)
	\end{split}
\end{equation}
for some constant $C>0 $ independent of $k$. From this and~\eqref{fver}, by combining~\eqref{fwefwe} and \eqref{vdsfs}, one has that 
\begin{equation}
	\begin{split}
		\left\langle L_{s_k}\varphi_{k,m_k},w \right\rangle_{L^2}& =\left\langle \varphi_{k,m_k},w \right\rangle_{s_k}-(p+1)\int_{{\mathbb{R}^n}}u_s^{p}\varphi_{k,m_k}\,w\, dx\\
		&\rightarrow \left\langle \varphi,w \right\rangle_{\sigma}-(p+1)\int_{{\mathbb{R}^n}}u_\sigma^{p}\varphi\,w\, dx=\left\langle L_{\sigma}\varphi,w \right\rangle_{L^2} \text{ as }k\rightarrow\infty.
	\end{split}
\end{equation} 
As a consequence of this, owing to~\eqref{vsdfds} and~\eqref{fdvfsv}, we obtain the desired result~\eqref{cghgu}.

Step 2. we show that 
\begin{equation}\label{vsdfvsd}
	\left\langle \varphi,u_\sigma \right\rangle_{\sigma}=0.
\end{equation}
Indeed, we observe that, 
%if $0<\sigma<1$ then,
%\begin{equation}
%	\begin{split}
%			\left|\left\langle \varphi_{k,m_k},u_{s_k} \right\rangle_{s_k}-\left\langle \varphi,u_\sigma \right\rangle_{\sigma}\right|&\leqslant \left|\left\langle \varphi_{k,m_k}-\varphi,u_\sigma \right\rangle_{\sigma}\right|+\left|\left\langle \varphi_{k,m_k},u_\sigma-u_{s_k} \right\rangle_{\sigma}\right|\\
%			&\quad +\left|\left\langle \varphi_{k,m_k}, u_{s_k} \right\rangle_{s_k}-\left\langle \varphi_{k,m_k}, u_{s_k} \right\rangle_{\sigma}\right|\\
%			&\leqslant o(1)+2\|u_\sigma-u_{s_k}\|_{\sigma} +C_\sigma|\sigma-s_k|\,\|u_{s_k}\|_{2}.
%	\end{split}
%\end{equation}
if $\sigma=0$,  from~\eqref{bjk}, it follows that
\begin{equation}
	\begin{split}
		\left|\left\langle \varphi_{k,m_k},u_{s_k} \right\rangle_{s_k}-\left\langle \varphi,u_0 \right\rangle_{0}\right|&\leqslant \left|\left\langle \varphi_{k,m_k}-\varphi,u_0 \right\rangle_{0}\right|+\left|\left\langle \varphi_{k,m_k},u_0-u_{s_k} \right\rangle_{0}\right|\\
		&\quad +\left|\left\langle \varphi_{k,m_k}, u_{s_k} \right\rangle_{s_k}-\left\langle \varphi_{k,m_k}, u_{s_k} \right\rangle_{0}\right|\\
		&\leqslant o(1)+2\|u_0-u_{s_k}\|_{0} +C|s_k|\,\|u_{s_k}\|_{2},
	\end{split}
\end{equation}
where the constant $C>0$ is independent  of $k$.
By taking into account the fact that $ \left\langle \varphi_{k,m_k},u_{s_k} \right\rangle_{s_k}=0$ and Lemma~\ref{lemma 2 norm is bound},  one has $\left\langle \varphi,u_0 \right\rangle_{0}=0 $.

As for $\sigma=1$. Recall the fact that $\partial_j u_{s_k}\in \text{Ker}\,L_{s_k}$. We thus can infer from Lemma~\ref{lemma uniform decay of spectrum} that for every $s_k>9/10$, 
\begin{equation}
	\|\partial_j u_{s_k}\|_{2s_k}\leqslant C,
\end{equation}
where $C$ depends on $n,p,\|u_{s_k}\|_{L^\infty(\mathbb{R}^n)},\|\partial_j u_{s_k}\|_{L^2(\mathbb{R}^n)}$. Lemmata~\ref{lemma lambda1} and~\ref{lemma uniform bound} imply that $\|u_{s_k}\|_{2s_k+1}$ is uniformly bounded.  Also, one has that 
\begin{equation}
		\left|\left\langle \varphi_{k,m_k},u_{s_k} \right\rangle_{s_k}-\left\langle \varphi,u_1 \right\rangle_{1}\right|\leqslant o(1)+2\|u_1-u_{s_k}\|_{1} +C|1-s_k|\,\|u_{s_k}\|_{2s_k+1},
\end{equation}
where $C$ is independent of $s_k$. Hence, one deduces that $\left\langle \varphi,u_1 \right\rangle_{1}=0 $.  
This proves~\eqref{vsdfvsd}. 

By combining~\eqref{cghgu} with~\eqref{vsdfvsd}, we have that $\varphi\in \mathcal{N}_\sigma\cap \text{Ker}\,L_\sigma$.
 Thus, the nondegeneracy result of $L_\sigma$ implies $\varphi=0$.
\smallskip

Now we finish the proof of Lemma~\ref{lemma cnidnc}. Recalling~\eqref{fver}, one has that
\begin{equation}\label{vdsv }
	\varphi_{k,m_k}\rightarrow 0 \quad \text{in }L_{\rm loc}^{p+2}(\mathbb{R}^n).
\end{equation}
Moreover, by H\"{o}lder inequality and~\eqref{dgsdbf}, one has that, for $R>0$ large enough 
\begin{equation}
	\begin{split}
		\left\langle L_{s_k}\varphi_{k,m_k},\varphi_{k,m_k} \right\rangle_{L^2}&=1-(p+1)\int_{{\mathbb{R}^n}}u_{s_k}^p\varphi_{k,m_k}^2\\
		&\geqslant 1-(p+1)\left(\|u_s\|^p_{L^{p+2}(B_R)}\|\varphi_{k,m_k}\|^2_{L^{p+2}(B_R)}-CR^{-np}\|\varphi_{k,m_k}\|^2_{L^2(\mathbb{R}^n)}\right).
	\end{split}
\end{equation}
As a result, employing Lemma~\ref{lemma lambda1} and~\eqref{vdsv }, we see that
\begin{equation}\label{afaddgv}
	\left\langle L_{s_k}\varphi_{k,m_k},\varphi_{k,m_k} \right\rangle_{L^2}= 1-o(1).
\end{equation}
Additionally, from~\eqref{vsdfds} and~\eqref{dvsdg}, one derives that 
\[ \left\langle L_{s_k}\varphi_{k,m_k},\varphi_{k,m_k} \right\rangle_{L^2}\rightarrow 0\qquad \text{as } k\rightarrow \infty. \]
This  contradicts~\eqref{afaddgv}.  
The proof of Lemma~\ref{lemma cnidnc} is thereby complete.
\end{proof}

%{\color{red}\begin{Lemma}\label{lemma cnidncds}
%	Let $u_s\in\mathcal{M}_s$ and  $\mathcal{N}_s:=\left\{\varphi\in H^1_{\rm rad}(\mathbb{R}^n):\, \varphi\bot_s u_s \right\}$.  Then there exist $s_0,s_1\in (0,1)$ such that 
%	\begin{equation}\label{sbdvj}
%	\inf\limits_{s\in[0,s_0]\cup [s_1,1]}\inf\limits_{\varphi\in\mathcal{N}_s\backslash\left\{0\right\}}	\left\langle L_s\varphi,\varphi \right\rangle_{L^2}> C \|\varphi\|_{s}^2
%	\end{equation}
%for some constant $C>0$ independent of $s$.
%\end{Lemma}
%\begin{proof}
%In view of Lemmata~\ref{lemma nondegeneracy of radial sector} and~\ref{lemma nondegeneracy of radial sector s1}, there exist $s_0,s_1\in (0,1)$ such that 
%\begin{equation}\label{bhid}
%	\inf\limits_{s\in[0,s_0]\cup [s_1,1]}\inf\limits_{\varphi\in\mathcal{N}_s\backslash\left\{0\right\}}\langle L_s\varphi, \varphi\rangle_{L^2}>0
%\end{equation} 
%\end{proof}
%
%
%}

Let $s\in[0,1]$, and  denote a functional 
\begin{equation}\label{dfdsvnk}
	\mathcal{I}_s(u):=\frac{1}{2}\|u\|_{s}^2-\frac{1}{p+2}\int_{{\mathbb{R}^n}}|u|^{p+2}\, dx \quad \forall u\in H^1(\mathbb{R}^n).
\end{equation}
Also, given $\sigma\in[0,1]$,  we define a mapping $\Phi_s^\sigma:H^1_{\rm rad}(\mathbb{R}^n)\rightarrow H^1_{\rm rad}(\mathbb{R}^n) $ by
\begin{equation}\label{dsgrvx}
	\Phi^\sigma_s(w)=\nabla \mathcal{I}_s(u_\sigma+w),
\end{equation}
where  $s\leqslant \sigma$ and  $u_s\in\mathcal{M}_s$.
As customary,  \eqref{dsgrvx} is equivalent to  
\begin{equation}
	\left\langle \Phi^\sigma_s(w),v \right\rangle_{s}:=\left\langle \nabla \mathcal{I}_s(u_\sigma+w),v \right\rangle_{L^2}\qquad \forall v\in H^1_{\rm rad}(\mathbb{R}^n).
\end{equation}

\subsection{$s$ close to $1$}\label{sec nvfnvk}

In view of the  uniqueness result for the local case, we first prove that if $s$ is sufficiently close to $1$, the equation possesses a unique $u_s\in\mathcal{M}_s$, as given by the following result.

\begin{Theorem}\label{th:uniqueness s close to 1}
	There exists $s_1\in(0,1)$ such that for every $s\in(s_1,1)$, the ground state $u_s$ is unique up to translation.
\end{Theorem}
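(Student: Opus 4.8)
The plan is to reduce to radial ground states, localise them near the unique radial local ground state $u_1$, and then exploit the nondegeneracy of the limiting operator $L_1$ together with the uniform estimates of Section~\ref{sec:Preliminaries}. Since by Theorem~\ref{th: existence and properity} every element of $\mathcal{M}_s$ is, after a translation, radial, positive and decreasing, it suffices to show that for $s$ close to $1$ there is exactly one radial $u_s\in\mathcal{M}_s$. First I would localise: if $s_k\uparrow1$ and $u_{s_k}\in\mathcal{M}_{s_k}$ is chosen radial and centred at the origin, Lemma~\ref{lemma convergence} gives a subsequence converging in the $\|\cdot\|_2$–norm to a radial element of $\mathcal{M}_1$, which by the local uniqueness–nondegeneracy result~\eqref{L1} is $u_1$. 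Hence for each $\rho>0$ there is $s_1(\rho)\in(0,1)$ such that every radial $u_s\in\mathcal{M}_s$ with $s\in(s_1(\rho),1)$ satisfies $\|u_s-u_1\|_2<\rho$, and it remains only to prove that equation~\eqref{main equation} admits at most one radial solution in a fixed small ball about $u_1$.

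For that I would carry out the pseudo-minimiser construction prepared just above: for $s$ near $1$ one looks for zeros of $\Phi^1_s(w)=\nabla\mathcal{I}_s(u_1+w)$ in $H^1_{\rm rad}(\mathbb{R}^n)$. Here $\Phi^1_1(0)=0$, and since $u_1\in H^2(\mathbb{R}^n)$ the comparison $\|(-\Delta)^su_1-(-\Delta)u_1\|_{L^2}\le C(1-s)$ from \cite[Lemma~2.4]{MR3207007} gives $\|\Phi^1_s(0)\|_s\to0$. The differential $D_w\Phi^1_1(0)$ is the realisation on $H^1_{\rm rad}$ of $L_1=-2\Delta+1-(p+1)u_1^p$, which is Fredholm of index zero (as $-2\Delta+1$ is an isomorphism $H^1\to H^{-1}$ and multiplication by $(p+1)u_1^p$ is a compact perturbation) and injective on $L^2_{\rm rad}(\mathbb{R}^n)$ because $\mathrm{Ker}\,L_1=\mathrm{span}\{\partial_iu_1\}$ contains no radial function; hence it is boundedly invertible on $H^1_{\rm rad}$. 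A contraction–mapping argument for $\Phi^1_s(w)=0$ on a fixed small ball then yields, for $s\in(s_1,1)$, a unique small solution $w_s$. Choosing $\rho$ in the localisation step smaller than the radius of this ball, the (existing) radial element of $\mathcal{M}_s$ is, for $s$ sufficiently close to $1$, a zero of $\Phi^1_s$ in that ball, hence equals $u_1+w_s$; this proves the theorem.

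The step I expect to be the main obstacle is making the contraction argument \emph{uniform} in $s$: the operators $(-\Delta)^s$ do not converge to $-\Delta$ uniformly as maps $H^1\to H^{-1}$, so the invertibility of $D_w\Phi^1_s(w)$ for $s$ in a left neighbourhood of $1$ and $w$ in a fixed small ball cannot be obtained by abstract continuity and must instead be read off from the quantitative bounds of Section~\ref{sec:Preliminaries} — the uniform energy, $L^\infty$ and $H^2$ bounds (Lemmata~\ref{lemma lambda1}, \ref{lemma uniform bound}, \ref{lemma 2 norm is bound}) and, decisively, the uniform positivity $\inf_s\inf_{u_s}\Pi(s,u_s)>0$ of Lemma~\ref{lemma cnidnc}, which controls the positive part of the spectrum of $L_s$ on the Nehari tangent space $\mathcal{N}_s$ uniformly, while the single negative eigenvalue (Proposition~\ref{proposition A.1}) stays away from $0$ by perturbation from $L_1$.

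Alternatively — and this is the route I would actually prefer, since it needs no uniform construction — one argues by contradiction. If $u_{s_k}\ne v_{s_k}$ are radial ground states with $s_k\uparrow1$, set $w_k:=v_{s_k}-u_{s_k}$; by the localisation above $w_k\to0$ in $\|\cdot\|_2$, and since $u_{s_k},v_{s_k}\in\mathcal{M}_{s_k}$ share the same $L^{p+2}$–norm one gets $\langle w_k,u_{s_k}+v_{s_k}\rangle_{s_k}=0$, so $\langle w_k,u_{s_k}\rangle_{s_k}=-\tfrac12\|w_k\|_{s_k}^2$ and the $\mathcal{N}_{s_k}$–projection of $\hat w_k:=w_k/\|w_k\|_{s_k}$ differs from $\hat w_k$ by $o(1)$ in $H^1$. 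Subtracting the two equations yields $L_{s_k}w_k=r_k$ with $r_k=(v_{s_k}^{p+1}-u_{s_k}^{p+1})-(p+1)u_{s_k}^pw_k$, and the fundamental theorem of calculus, the uniform $L^\infty$ bound and the Sobolev embedding $\|w_k\|_{L^{q}}\le C\|w_k\|_{s_k}$ for $q=\min(p,1)+2\le 2^*$ give $\big|\langle L_{s_k}\hat w_k,\hat w_k\rangle_{L^2}\big|=\|w_k\|_{s_k}^{-2}\big|\int r_kw_k\big|\le C\|w_k\|_{s_k}^{\min(p,1)}\to0$; but Lemma~\ref{lemma cnidnc} forces $\langle L_{s_k}\hat w_k,\hat w_k\rangle_{L^2}\ge\Pi(s_k,u_{s_k})\,\|\hat w_k\|_{s_k}^2+o(1)\ge c+o(1)$ with $c>0$, a contradiction. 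Hence $u_{s_k}=v_{s_k}$ for $s$ close to $1$, which is the asserted uniqueness up to translation.
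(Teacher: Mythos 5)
Your first route is, step for step, the argument the paper gives: localise every radial ground state near $u_1$ via Lemma~\ref{lemma convergence} and the local uniqueness, then run a contraction argument for $\Phi^1_s(w)=0$ on a small ball, with the uniform invertibility of $\nabla\Phi^1_s(0)$ (the paper's Lemma~\ref{lemma busdhiv}) supplied precisely by the uniform coercivity of Lemma~\ref{lemma cnidnc}. The route you say you prefer, however, is genuinely different and is a legitimate shortcut: instead of building the pseudo-minimiser and invoking a fixed-point theorem, you argue by contradiction and feed $w_k=v_{s_k}-u_{s_k}$ directly into the coercivity of Lemma~\ref{lemma cnidnc}. The identity $\langle w_k,u_{s_k}+v_{s_k}\rangle_{s_k}=0$ forces the normalised difference to be asymptotically orthogonal to $u_{s_k}$, so it is an approximate test function for $\Pi(s_k,u_{s_k})\ge c>0$; the Taylor-remainder bound on $r_k=v_{s_k}^{p+1}-u_{s_k}^{p+1}-(p+1)u_{s_k}^pw_k$, combined with the uniform $L^\infty$ bound and the embedding $H^1\hookrightarrow L^{\min(p,1)+2}$, makes $\langle L_{s_k}\hat w_k,\hat w_k\rangle_{L^2}\to0$, which is incompatible with the lower bound $c+o(1)$. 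This trades the pair Lemma~\ref{lemma busdhiv}--Lemma~\ref{lemma mlfbdfzb} for a few lines, at the price of being purely a uniqueness argument (the paper's fixed-point construction also produces an abstract, $s$-uniform existence statement for the pseudo-minimiser, which your contradiction argument does not). Two small points to tighten: the orthogonality $\langle w_k,u_{s_k}+v_{s_k}\rangle_{s_k}=0$ follows from equality of the $\|\cdot\|_{s_k}$-norms (equivalently of the energies $\lambda_{s_k}^{1+2/p}$), not of the $L^{p+2}$-norms, though both hold on $\mathcal{M}_{s_k}$; and the passage from $\langle L_{s_k}\varphi_k,\varphi_k\rangle\geq c\|\varphi_k\|_{s_k}^2$ (for the $\mathcal{N}_{s_k}$-projection $\varphi_k$) to the claimed bound on $\langle L_{s_k}\hat w_k,\hat w_k\rangle$ needs the uniform control of the cross terms $\langle L_{s_k}\varphi_k,\hat u_{s_k}\rangle_{L^2}$ and $\langle L_{s_k}\hat u_{s_k},\hat u_{s_k}\rangle_{L^2}$, which follows from $L_{s_k}u_{s_k}=-p\,u_{s_k}^{p+1}$ and the uniform $L^{p+2}$-bounds, but should be stated.
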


To begin with, we need to introduce the following Lemmata~\ref{lemma busdhiv} and~\ref{lemma mlfbdfzb}.

\begin{Lemma}\label{lemma busdhiv}
	For every $f\in H^1_{\rm rad}(\mathbb{R}^n)$, there exists a unique $ w_s\in H^1_{\rm rad}(\mathbb{R}^n)$ such that 
	\begin{equation}\label{vkjn}
		 \left\langle \nabla\Phi^1_s(0)[w_s],w \right\rangle_{s}= \left\langle f,w \right\rangle_{s}\qquad \forall w\in H^1_{\rm rad}(\mathbb{R}^n).
	\end{equation}
In particular, there exist $C>0$ and $s_1\in(0,1)$ such that 
\begin{equation}\label{xfhv}
	\left|\left(\nabla\Phi^1_s(0)\right)^{-1}\right|\leqslant C \qquad \forall s\in(s_1,1).
\end{equation}
\end{Lemma}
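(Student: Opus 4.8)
The plan is to recognize $\nabla\Phi_s^1(0)$ as a perturbation of the identity, so that its invertibility follows from the Fredholm alternative once an $s$-uniform lower bound is available, and this lower bound in turn will follow by a compactness argument from the nondegeneracy \eqref{L1} of the local linearized operator $L_1$ on radial functions. Unravelling the definitions of $\Phi_s^1$ and $\mathcal{I}_s$, the operator $\nabla\Phi_s^1(0)\colon H^1_{\rm rad}(\mathbb{R}^n)\to H^1_{\rm rad}(\mathbb{R}^n)$ is characterized by
$$\left\langle \nabla\Phi_s^1(0)[w],v\right\rangle_s=B_s(w,v):=\left\langle w,v\right\rangle_s-(p+1)\int_{\mathbb{R}^n}u_1^p\,w\,v\,dx\qquad\forall\,w,v\in H^1_{\rm rad}(\mathbb{R}^n),$$
i.e.\ it is the Riesz representative with respect to $\langle\cdot,\cdot\rangle_s$ of the radial realization of $-\Delta+(-\Delta)^s+1-(p+1)u_1^p$ (a bounded symmetric form, since $u_1\in L^\infty$). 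As $u_1$ is a fixed, bounded, radially decreasing function vanishing at infinity (Theorem~\ref{th: existence and properity}), the quadratic form $w\mapsto\int u_1^p w^2$ is compact relative to $\|\cdot\|_s^2$ (local Rellich compactness together with the decay of $u_1$), so $\nabla\Phi_s^1(0)=\mathrm{Id}-K_s$ with $K_s$ self-adjoint and compact on $(H^1_{\rm rad}(\mathbb{R}^n),\langle\cdot,\cdot\rangle_s)$; in particular $\nabla\Phi_s^1(0)$ is self-adjoint and Fredholm of index zero. Hence it suffices to produce $c>0$ and $s_1\in(0,1)$ such that
$$\left\|\nabla\Phi_s^1(0)[w]\right\|_s\geqslant c\,\|w\|_s\qquad\forall\,w\in H^1_{\rm rad}(\mathbb{R}^n),\ \forall\,s\in(s_1,1),$$
since this forces $\nabla\Phi_s^1(0)$ to be injective with closed range and, being self-adjoint of index zero, bijective; then $w_s:=\left(\nabla\Phi_s^1(0)\right)^{-1}f$ is the unique solution of \eqref{vkjn}, and \eqref{xfhv} holds with $C=1/c$.

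The uniform lower bound is proved by contradiction. If it fails, there are $s_k\nearrow1$ and $w_k\in H^1_{\rm rad}(\mathbb{R}^n)$ with $\|w_k\|_{s_k}=1$ and $f_k:=\nabla\Phi_{s_k}^1(0)[w_k]$ satisfying $\|f_k\|_{s_k}\to0$, that is $B_{s_k}(w_k,v)=\langle f_k,v\rangle_{s_k}$ for all $v\in H^1_{\rm rad}(\mathbb{R}^n)$. By the equivalence of norms the family $\{w_k\}$ is bounded in $H^1(\mathbb{R}^n)$, so after passing to a subsequence $w_k\rightharpoonup w$ weakly in $H^1(\mathbb{R}^n)$ and strongly in $L^{2}_{\rm loc}(\mathbb{R}^n)$. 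Testing against $\varphi^2 w_k$ for suitable radial cut-offs and running the Moser iteration exactly as in Lemma~\ref{lemma uniform bound}, now with the \emph{fixed}, bounded, short-range potential $(p+1)u_1^p$, yields $\|w_k\|_{L^\infty(\mathbb{R}^n)}\leqslant C$ uniformly in $k$; then the uniform decay estimate of Lemma~\ref{lemma uniform decay of spectrum}, applied with potential $-(p+1)u_1^p$, gives $|w_k(x)|\leqslant C|x|^{-n}$ for $|x|\geqslant1$ with $C$ independent of $k$. Since $|x|^{-n}\in L^2(\{|x|>1\})$ for $n\geqslant1$, the tails of $\int w_k^2$ are uniformly small, whence $w_k\to w$ strongly in $L^2(\mathbb{R}^n)$ and, by interpolation with the uniform $H^1$-bound, in $L^{p+2}(\mathbb{R}^n)$.

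It remains to identify the limit and conclude. For $\varphi\in C_c^\infty(\mathbb{R}^n)$ we write $B_{s_k}(w_k,\varphi)=\int(-\Delta\varphi)w_k+\int\big((-\Delta)^{s_k}\varphi\big)w_k+\int\varphi w_k-(p+1)\int u_1^p\varphi w_k$; using $(-\Delta)^{s_k}\varphi\to-\Delta\varphi$ in $L^2(\mathbb{R}^n)$ from \cite[Lemma~2.4]{MR3207007}, the convergences just established, and $\langle f_k,\varphi\rangle_{s_k}\to0$, we pass to the limit to obtain that $w$ is a radial weak solution of $-\Delta w+(-\Delta)w+w=(p+1)u_1^pw$, i.e.\ $w\in\mathrm{Ker}\,L_1\cap L^2_{\rm rad}(\mathbb{R}^n)$, hence $w=0$ by \eqref{L1}. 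On the other hand,
$$1=\|w_k\|_{s_k}^2=B_{s_k}(w_k,w_k)+(p+1)\int_{\mathbb{R}^n}u_1^pw_k^2=\langle f_k,w_k\rangle_{s_k}+(p+1)\int_{\mathbb{R}^n}u_1^pw_k^2\longrightarrow(p+1)\int_{\mathbb{R}^n}u_1^pw^2=0,$$
a contradiction. The only genuinely delicate step is obtaining the $s$-uniform $L^\infty$-bound and decay of $\{w_k\}$, which are what provide the strong $L^2$-compactness needed to pass to the limit and invoke \eqref{L1}; once these are in hand the rest is the standard Fredholm/limiting scheme, all of the $s$-dependence being controlled by the norm equivalence of Lemma~\ref{lemma lambda1} and by \cite[Lemma~2.4]{MR3207007}.
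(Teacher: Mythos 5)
Your overall strategy is genuinely different from the paper's. The paper first identifies $\nabla\Phi^1_s(0)$ with $L_1$ via $\left\langle \nabla\Phi^1_s(0)[v],w \right\rangle_{s}= \left\langle L_1 v,w \right\rangle_{L^2}$, and then tries to establish a lower bound for the quadratic form $|\langle L_1 v,v\rangle_{L^2}|$ uniformly in $s$, by combining Lemma~\ref{lemma cnidnc} (coercivity of $L_s$ on $\{u_s\}^{\perp_s}$), the relation $\langle L_su_s,u_s\rangle_{L^2}=-p\|u_s\|_s^2$, and the $s\to 1$ perturbation $\|u_s-u_1\|_2\to0$; it then invokes Lax--Milgram. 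You instead observe that $\nabla\Phi^1_s(0)=\mathrm{Id}-K_s$ with $K_s$ compact and self-adjoint, so that bijectivity reduces to a uniform operator lower bound $\|\nabla\Phi_s^1(0)[w]\|_s\geqslant c\|w\|_s$, which you obtain directly by a contradiction/compactness argument using the nondegeneracy \eqref{L1}. Your route avoids any coercivity claim on the indefinite form $\langle L_1v,v\rangle_{L^2}$ (which the paper asserts in \eqref{bfdg} but which in fact fails along the negative-mode direction), and instead uses only injectivity plus Fredholm; this is cleaner on that point.

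There is, however, a concrete gap in the intermediate compactness steps. In your contradiction sequence, $w_k$ satisfies the \emph{inhomogeneous} equation $-\Delta w_k+(-\Delta)^{s_k}w_k+w_k-(p+1)u_1^pw_k=-\Delta f_k+(-\Delta)^{s_k}f_k+f_k$ in the distributional sense, with $f_k\in H^1_{\rm rad}$ and $\|f_k\|_{s_k}\to0$. Lemma~\ref{lemma uniform decay of spectrum} is stated for the eigenvalue problem $-\Delta u+(-\Delta)^s u+Vu=Wu$ with no source term, so it does not apply to $w_k$ as written; and the Moser iteration "exactly as in Lemma~\ref{lemma uniform bound}" is also not immediate here, since the source term $-\Delta f_k$ is only $H^{-1}$ (indeed $w_k$ need not be in $L^\infty$ or $H^2$ unless $f_k$ is). Fortunately, these steps are also unnecessary: you already use that multiplication by $u_1^{p/2}$ is compact from $H^1_{\rm rad}(\mathbb{R}^n)$ to $L^2(\mathbb{R}^n)$ (bounded, short-range potential) to conclude $K_s$ is compact, and that same compactness, combined with $w_k\rightharpoonup w$ in $H^1$, directly gives $\int u_1^pw_k^2\to\int u_1^pw^2$; one never needs strong $L^2(\mathbb{R}^n)$ convergence of $w_k$ itself. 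With that shortcut, the identification $w\in\mathrm{Ker}\,L_1\cap L^2_{\rm rad}=\{0\}$ via \eqref{L1} (which only uses $L^2_{\rm loc}$ convergence and \cite[Lemma~2.4]{MR3207007}) and the final line $1=\langle f_k,w_k\rangle_{s_k}+(p+1)\int u_1^pw_k^2\to 0$ remain valid, and the proof closes without the $L^\infty$/decay estimates.
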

\begin{proof}
To this end, we first notice that 
\begin{equation}
	\left\langle \nabla\Phi^1_s(0)[v],w \right\rangle_{s}= \left\langle L_1 v,w \right\rangle_{L^2}\qquad \forall v,w\in H^1_{\rm rad}(\mathbb{R}^n).
\end{equation}
Hence, it suffices to find a solution $w_s$ to the equation
\begin{equation}\label{dfvdf}
	\left\langle L_1 w_s,w \right\rangle_{L^2}=\left\langle f,w \right\rangle_{s} \qquad \forall w\in H^1_{\rm rad}(\mathbb{R}^n).
\end{equation}
For this, by Lemma~\ref{lemma convergence}, we know that $\|u_s-u_1\|_{2}\rightarrow 0$ as $s\nearrow 1$. This implies that $u_s\rightarrow u_1$ in $L^{p+2}(\mathbb{R}^n)$ and thus we have  that, for every $w\in H^1_{\rm rad}(\mathbb{R}^n), $
\begin{equation}
	\begin{split}
	\left|\left\langle L_1 w,w \right\rangle_{L^2}-\left\langle L_s w,w \right\rangle_{L^2}\right|&=(p+1)\left|\int_{{\mathbb{R}^n}}\left(u_1^p-u_s^p\right)w^2\, dx\right|\\
	&\leqslant (p+1)\|u_1^p-u_s^p\|^p_{L^{\frac{p+2}{p}}(\mathbb{R}^n)}\|w\|^2_{L^{p+2}(\mathbb{R}^n)}\\
	&\leqslant (p+1)\|u_1^{p+2}-u_s^{p+2}\|^p_{L^{1}(\mathbb{R}^n)}\|w\|^2_{L^{p+2}(\mathbb{R}^n)}.
	\end{split}
\end{equation}
It follows that 
\begin{equation}\label{vjhhkj}
		\left|\left\langle L_1 w,w \right\rangle_{L^2}\right|\geqslant\left|\left\langle L_s w,w \right\rangle_{L^2}\right|-o(1)\|w\|^2_{s}.
\end{equation}
Moreover, we observe that \begin{equation}\label{dfis}
	\left|\left\langle L_s u_s,u_s \right\rangle_{L^2}\right|=p\|u_s\|^2_{s}.
\end{equation}
Additionally, in the light of Lemma~\ref{lemma cnidnc}, one has that 
\begin{equation}\label{vdsnkvji}
	\left|\left\langle L_s w,w \right\rangle_{L^2}\right|\geqslant C\|w\|^2_{s},\qquad \forall w\bot_s u_s
\end{equation}
for some positive constant $C$.

By combining~\eqref{vjhhkj}, ~\eqref{dfis} and~\eqref{vdsnkvji}, one deduces that
\begin{equation}\label{bfdg}
	\left|\left\langle L_1 v,v \right\rangle_{L^2}\right|\geqslant C\|v\|^2_{s},\qquad \forall v\in H^1_{\rm rad}(\mathbb{R}^n).
\end{equation}
As a consequence, we are in a position to apply the Lax-Milgram Theorem, and so there exists a unique $w_s\in H^1_{\rm rad}(\mathbb{R}^n)$ such that 
 \begin{equation}
 	\left\langle L_1 w_s,w \right\rangle_{L^2}=\left\langle f,w \right\rangle_{s} \qquad \forall w\in H^1_{\rm rad}(\mathbb{R}^n).
 \end{equation}
Also, by~\eqref{bfdg}
\begin{equation}
	\|f\|_{s}\geqslant \frac{\left|\left\langle L_1 w_s,w_s \right\rangle_{L^2}\right|}{\|w_s\|_s}\geqslant C\|w_s\|_s.
\end{equation}
This entails the desired results ~\eqref{vkjn} and~\eqref{xfhv}.
\end{proof}

Given $\sigma\in[0,1]$, for $r>0$ and $s\in(0,1)$,   we set 
\begin{equation}\label{vhjjbbj}
	\mathcal{A}^\sigma_{r,s}:=\left\{w\in H^1_{\rm rad}(\mathbb{R}^n):\, \|w\|_s\leqslant r|\sigma-s| \right\}.
\end{equation}

\begin{Lemma}\label{lemma mlfbdfzb}
 There exist $s_1\in(0,1)$ and $r_0>0$ such that for any $s\in(s_1,1)$, one can find a unique function $w^s\in \mathcal{A}^1_{r_1,s_1}$ such that 
 \begin{equation}
 	\Phi^1_s(w^s)=0.
 \end{equation}
Here $\mathcal{A}^1_{r_1,s_1}$ is defined in \eqref{vhjjbbj}.
\end{Lemma}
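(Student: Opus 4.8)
The plan is to solve $\Phi^1_s(w)=0$ by recasting it as a fixed-point equation to which the Banach contraction principle applies, using the uniformly bounded invertibility of $\nabla\Phi^1_s(0)$ provided by Lemma~\ref{lemma busdhiv} (this is the local realization of the quantitative implicit function theorem of \cite{MR3207007}). Writing the Newton-type expansion
\[
\Phi^1_s(w)=\Phi^1_s(0)+\nabla\Phi^1_s(0)[w]+N_s(w),\qquad N_s(w):=\Phi^1_s(w)-\Phi^1_s(0)-\nabla\Phi^1_s(0)[w],
\]
and observing that the $s$-dependence of $\mathcal{I}_s$ lies entirely in its (exactly quadratic) principal part, one sees that $N_s$ is the second-order Taylor remainder of the nonlinear term only; concretely, as a bounded functional on $H^1_{\rm rad}(\mathbb{R}^n)$,
\[
\langle N_s(w),v\rangle_s=-\int_{\mathbb{R}^n}\bigl(|u_1+w|^p(u_1+w)-u_1^{p+1}-(p+1)u_1^p\,w\bigr)v\,dx,
\]
which is in fact independent of $s$. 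Applying $\bigl(\nabla\Phi^1_s(0)\bigr)^{-1}$, the equation $\Phi^1_s(w)=0$ is equivalent to the fixed-point equation $w=T_s(w)$, where $T_s(w):=-\bigl(\nabla\Phi^1_s(0)\bigr)^{-1}\bigl[\Phi^1_s(0)+N_s(w)\bigr]$. (One should first record that, by the Riesz representation theorem and the Sobolev embedding $H^1\hookrightarrow L^{p+2}$, $\Phi^1_s$ is a well-defined map $H^1_{\rm rad}(\mathbb{R}^n)\to H^1_{\rm rad}(\mathbb{R}^n)$, of class $C^1$ when $p\geqslant1$, so that the expansion and $\nabla\Phi^1_s(0)$ make sense.)

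Two quantitative ingredients then drive the contraction. First, the \emph{defect is small}: since $u_1\in\mathcal{M}_1$ is a critical point of $\mathcal{I}_1$, one has $\langle\Phi^1_s(0),v\rangle_s=\langle u_1,v\rangle_s-\langle u_1,v\rangle_1=\int_{\mathbb{R}^n}(|\xi|^{2s}-|\xi|^2)\widehat{u_1}\,\overline{\hat v}\,d\xi$, so that by \cite[Lemma~2.4]{MR3207007} together with the uniform $H^2$-bound of Lemma~\ref{lemma 2 norm is bound},
\[
\|\Phi^1_s(0)\|_s\leqslant C\,|1-s|
\]
with $C$ independent of $s$. Second, \emph{the remainder is superlinear}: using the uniform $L^\infty$-bound $\|u_1\|_{L^\infty(\mathbb{R}^n)}\leqslant\Lambda_0$ of Lemma~\ref{lemma uniform bound}, the pointwise Taylor estimate for $t\mapsto|t|^p t$, and $H^1\hookrightarrow L^{p+2}$, one shows that for $w,w'$ in any fixed bounded ball of $H^1_{\rm rad}(\mathbb{R}^n)$,
\[
\|N_s(w)\|_s\leqslant C\,\|w\|_s^{1+\theta},\qquad \|N_s(w)-N_s(w')\|_s\leqslant C\,(\|w\|_s+\|w'\|_s)^{\theta}\,\|w-w'\|_s,
\]
for some $\theta\in(0,1]$ (one may take $\theta=\min\{1,p\}$), with $C$ uniform in $s$ close to $1$.

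Combining these bounds with the uniform estimate $\bigl|(\nabla\Phi^1_s(0))^{-1}\bigr|\leqslant C$ from \eqref{xfhv}, one obtains $\|T_s(w)\|_s\leqslant C_0|1-s|+C_0\|w\|_s^{1+\theta}$ and $\|T_s(w)-T_s(w')\|_s\leqslant C_0(\|w\|_s+\|w'\|_s)^\theta\|w-w'\|_s$ on bounded balls, all the norms $\|\cdot\|_s$, $\|\cdot\|_{s_1}$ being uniformly equivalent on $H^1(\mathbb{R}^n)$. Choosing $r_1$ large enough and then $s_1<1$ close enough to $1$ that $C_0(r_1|1-s_1|)^\theta\leqslant\tfrac12$, one checks that $T_s$ maps $\mathcal{A}^1_{r_1,s_1}$ into itself and is a contraction there for every $s\in(s_1,1)$; the Banach fixed-point theorem then yields a unique $w^s\in\mathcal{A}^1_{r_1,s_1}$ with $T_s(w^s)=w^s$, i.e. $\Phi^1_s(w^s)=0$, and uniqueness as a solution of $\Phi^1_s(w)=0$ in $\mathcal{A}^1_{r_1,s_1}$ follows since the fixed-point reformulation is exact. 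Moreover the self-map estimate in fact gives $\|w^s\|_s=O(|1-s|)$, so $w^s\to 0$ as $s\nearrow 1$, which is what is needed afterwards to compare $u_1+w^s$ with an arbitrary ground state $u_s$.

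The main obstacle is the superlinear estimate on $N_s$ in the low-regularity range $p<1$, where $t\mapsto|t|^p t$ fails to be $C^2$: there one must replace the naive second-order Taylor bound by a Hölder-type inequality such as $\bigl||a+b|^p(a+b)-|a|^p a-(p+1)|a|^p b\bigr|\leqslant C\bigl(|a|^{p-1}|b|^2\wedge|b|^{1+p}\bigr)$ and keep careful track of which Sobolev exponents are admissible, exploiting $p+2<2^*$ and the uniform $L^\infty$-control of $u_1$ to absorb the factors of $u_1$. Everything else — the invertibility of $\nabla\Phi^1_s(0)$, the $O(|1-s|)$ size of the defect, and the mechanics of the contraction — is routine once Lemmata~\ref{lemma busdhiv}, \ref{lemma uniform bound} and~\ref{lemma 2 norm is bound} are in hand.
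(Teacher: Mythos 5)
Your proposal is correct and follows essentially the same route as the paper: the same fixed-point reformulation $w = -(\nabla\Phi^1_s(0))^{-1}(\Phi^1_s(0)+\Psi^1_s(w))$ with $\Psi^1_s = N_s$, the same defect bound $\|\Phi^1_s(0)\|_s = O(|1-s|)$, the same superlinear remainder estimate with exponent $\gamma = 1+\theta = \min\{2,p+1\}$ (including the case split at $p=1$ via a mean-value/Taylor bound), and the same choice of $r_1$ large then $s_1$ close to $1$ so that $T_s$ is a contraction on $\mathcal{A}^1_{r_1,s_1}$. The only cosmetic difference is that you invoke \cite[Lemma~2.4]{MR3207007} for the $O(|1-s|)$ defect where the paper estimates $|\xi|^2-|\xi|^{2s}$ by hand; the content is identical.
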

\begin{proof}
On account of Lemma~\ref{lemma busdhiv}, we can entitle to transform 
the equation $	\Phi^1_s(w)=0$  to the following fixed point equation
\begin{equation}\label{vfdger}
	w=-\left(\nabla\Phi^1_s(0)\right)^{-1}\left(\Phi^1_s(0)+\Psi^1_s(w)\right)
\end{equation}
where $\Psi^1_s(w):=-\Phi^1_s(0)+\Phi^1_s(w)-\nabla\Phi^1_s(0)[w]$. Since $u_1$ is radial, from the definition of $\Phi_s^1$, it follows that $ \Psi_s^1$ is radial. 

Furthermore, we observe that, for every $v\in H^1_{\rm rad}(\mathbb{R}^n)$
\begin{equation}\label{fsvfd}
	\begin{split}
	\left\langle \Psi^1_s(w),v \right\rangle_{s}&=-\left\langle \Phi^1_s(0),v \right\rangle_{s}+\left\langle \Phi^1_s(w),v \right\rangle_{s}-\left\langle \nabla\Phi^1_s(0)[w],v \right\rangle_{s}\\
	&=\int_{{\mathbb{R}^n}}u_1^{p+1}v\, dx-\int_{{\mathbb{R}^n}}(u_1+w)^{p+1}v\,dx+(p+1)\int_{{\mathbb{R}^n}}u_1^pwv\, dx.
	\end{split}
\end{equation}
In the light of the  Mean value Theorem, we see that 
\begin{equation}
\left|(u_1+w)^{p+1}-u_1^{p+1}-(p+1)u_1^pw\right|\leqslant\begin{cases}
		C_p |w|^{p+1}  & 0<p<1\\
		C_p \left(|w|^2\|u_1\|^{p-1}_{L^\infty(\mathbb{R}^n)}+|w|^{p+1}\right) &p\geqslant 1.
	\end{cases}
\end{equation}
From this and~\eqref{fsvfd}, it follows that
\begin{equation}\label{vsdgsdv}
	\left|\left\langle \Psi^1_s(w),v \right\rangle_{s}\right|\leqslant	\begin{cases}
	 C_p\|w\|^{p+1}_{L^{p+2}(\mathbb{R}^n)}\|v\|_{L^{p+2}(\mathbb{R}^n)} & 0<p<1\\
	 C_{p,u_1}\left(\|w\|^2_{L^{\frac{2q}{q-1}}(\mathbb{R}^n)}\|v\|_{L^{q}(\mathbb{R}^n)}+\|w\|^{p+1}_{L^{p+2}(\mathbb{R}^n)}\|v\|_{L^{p+2}(\mathbb{R}^n)} \right) & p\geqslant 1,
	\end{cases}
\end{equation}
 where $q$ is either equal to the critical exponent $2^*$ if $n > 2$ or any real number in $(p+2,+\infty)$
	if $n =1,2$. 
Let us denote $\gamma:=\min\left\{2,p+1\right\}$.  By Sobolev inequality, we then have that for every $\|w\|_s<1$,
\begin{equation}\label{vsdg}
	\|\Psi^1_s(w)\|_s\leqslant C_1\|w\|^\gamma_s
\end{equation}
for some positive constant $C_1$ independent of $s$.

In addition, we have that 
\begin{equation}\label{vjb k}
	\begin{split}
	&\left|(u_1+w_1)^{p+1}-(u_1+w_2)^{p+1}-(p+1)u_1^p(w_1-w_2)\right|\\
\leqslant&\begin{cases}
		C_p \|u_1\|_{L^\infty(\mathbb{R}^n)}\left(|w_1|^{p}+|w_2|^p\right)|w_1-w_2|  \qquad    \qquad    \qquad   \qquad   \qquad \qquad 0<p<1\\
		C_p \left(\|u_1\|_{L^\infty(\mathbb{R}^n)}\left(|w_1|^{p}+|w_2|^p\right)+\|u_1\|^p_{L^\infty(\mathbb{R}^n)}\left(|w_1|+|w_2|\right)\right)|w_1-w_2|\quad  p\geqslant 1.
	\end{cases}
\end{split}
\end{equation}
It follows that 
\begin{equation}\label{vfdvdf}
	\begin{split}
	\|\Psi^1_s(w_1)-\Psi^1_s(w_2)\|_s
	\leqslant\begin{cases}
	 C_{p,u_1}\left(\|w_1\|^{p}_{s}+\|w_2\|^{p}_{s}\right)\|w_1-w_2\|_{s} \qquad \qquad  &0<p<1\\	 
	 C_{p,u_1}\left(\|w_1\|^{p}_{s}+\|w_2\|^{p}_{s}+\|w_1\|_{s}+\|w_2\|_{s}\right)\|w_1-w_2\|_{s}   &p\geqslant1.
 \end{cases}
\end{split}
\end{equation}
This implies that for every $\|w_1\|_s<1$ and $\|w_2\|_s<1$,
\begin{equation}\label{vsfvs}
	\|\Psi^1_s(w_1)-\Psi^1_s(w_2)\|_s\leqslant C_2\left(\|w_1\|^{\gamma-1}_{s}+\|w_2\|^{\gamma-1}_{s}\right)\|w_1-w_2\|_{s},
\end{equation}
where the positive constant $C_2$ is independent of $s$.

Now we claim that there exists a positive  constant $C_3$ independent of $s$ such that
\begin{equation}\label{nvksdvj}
	\|\Phi_s^1(0)\|_s\leqslant C_3(1-s).
\end{equation}

Indeed, by taking into account the fact that $ \left\langle \nabla \mathcal{I}_1(u_1),v \right\rangle_{L^2}=0$ for every $v\in H^1_{\rm rad}(\mathbb{R}^n)$, we see that for every $s>s_1>\frac{4}{5}$ 
\begin{equation}
\begin{split}
\left|\left\langle \Phi^1_s(0),v \right\rangle_{s}\right|&=\left|\left\langle \nabla \mathcal{I}_s(u_1)-\nabla\mathcal{I}_1(u_1),v \right\rangle_{L^2}\right|\leqslant \int_{{\mathbb{R}^n}}\left||\xi|^2-|\xi|^{2s}\right||\hat{u}_1|\,|\hat{v}|\, d\xi\\
&\leqslant C_{n}(1+\frac{1}{\delta})|1-s| \int_{{\mathbb{R}^n}}\left(1+|\xi|^{2+\delta+2-2s}\right)|\hat{u}_1|\,|\hat{v}|\, d\xi\leqslant C_{n}|1-s|\, \|u_1\|_{2}\, \|v\|_{s},
\end{split}
\end{equation}
where we set $\delta=2s_1-1$. This gives the claim~\eqref{nvksdvj}.

Finally, we shall prove that there exist  $s_1\in(0,1)$ and $r_1\in(0,1)$  such that for every $s\in(s_1,1)$, one can find a unique function $w^s\in \mathcal{A}^1_{r_1,s_1}$  solving the fixed equation~\eqref{vfdger}.

Indeed, from~\eqref{xfhv},~\eqref{vsdg} and~\eqref{nvksdvj}, it follows that,  for every $w\in \mathcal{A}^1_{r,s}$ 
\begin{equation}
	\|-\left(\nabla\Phi^1_s(0)\right)^{-1}\left(\Phi^1_s(0)+\Psi^1_s(w)\right)\|_s\leqslant C\left(C_3(1-s)+C_1r^\gamma(1-s)^\gamma\right).
\end{equation}  
Moreover, since $\gamma=\min\left\{2,p+1\right\}>1$, thus there exists $r_1>0$ sufficiently large and $s_1\in(0,1)$ depending on $r_1$ such that for any $s\in(s_1,1)$, one has that 
\begin{equation}
		r_1(1-s_1)> C\left(C_3(1-s)+C_1r_1^\gamma(1-s_1)^\gamma\right).
\end{equation}
As a result, for every $s\in(s_1,1)$, the mapping 
\begin{equation}
T:	w\longmapsto -\left(\nabla\Phi^1_s(0)\right)^{-1}\left(\Phi^1_s(0)+\Psi^1_s(w)\right)
\end{equation} 
maps $\mathcal{A}^1_{r_1,s_1}$ into itself. 

Moreover, by combining~\eqref{xfhv} with \eqref{vsfvs}, for every $w_1,w_2\in \mathcal{A}^1_{r_1,s_1}$,  one has that
\begin{equation}
	\|Tw_1-Tw_2\|_s\leqslant C\|\Psi^1_s(w_1)-\Psi^1_s(w_2)\|_s\leqslant 2CC_2\left(r_1(1-s_1)\right)^{\gamma-1}\|w_1-w_2\|_{s}.
\end{equation}
As a consequence of this, increasing $s_1$ if necessary,  the map $T$ is a contraction on $\mathcal{A}^1_{r_1,s_1}$. By the Banach fixed point theorem, for any 
$s\in(s_1,1)$, there exists a unique function $w^s\in \mathcal{A}^1_{r_1,s_1}$ solving the equation~\eqref{vfdger}. This ends the proof of Lemma~\ref{lemma mlfbdfzb}.
\end{proof}

With the preliminary work done so far, we can now complete the proof of Theorem~\ref{th:uniqueness s close to 1}. 
\begin{proof}[Proof of Theorem~\ref{th:uniqueness s close to 1}]
	Let $u_s^1\in\mathcal{M}_s$ and $u_s^2\in\mathcal{M}_s$. 
	
	We now claim that $u_s^1=u_s^2$ provided $s$ is close to $1$.
	
	Indeed, by combining Lemma~\ref{lemma convergence} and the uniqueness result for local case, we know that $u_s^i=u_1+w_s^i$ with $\|w_s^i\|_2\rightarrow 0$ as $s\nearrow 1$, for $i=1,2$. Since $w_s^i$ is symmetric with respect to the origin for $i=1,2$, on account of~\eqref{dsgrvx}, one has that $\Phi^1_s(w_s^i)=0$. Thus, by Lemma~\ref{lemma mlfbdfzb}, we conclude  that $w_s^1=w_s^2$
	 when $s$ is close to $1$.  
\end{proof}

\subsection{$s$ close to $0$}
We now turn to the unique result when $s$ close to $0$ by following the general strategy derived in Section~\ref{sec nvfnvk}.
Using a similar argument to the proof of Lemma~\ref{lemma busdhiv}, we have

\begin{Lemma}\label{lemma fvjhjjb}
	For every $f\in H^1_{\rm rad}(\mathbb{R}^n)$, there exists a unique $ w_s\in H^1_{\rm rad}(\mathbb{R}^n)$ such that 
	\begin{equation}\label{vsfsd}
		\left\langle \nabla\Phi^{0}_s(0)[w_s],w \right\rangle_{s}= \left\langle f,w \right\rangle_{s}\qquad \forall w\in H^1_{\rm rad}(\mathbb{R}^n).
	\end{equation}
	In particular, there exist $C>0$ and $s_0\in(0,1)$ such that 
	\begin{equation}\label{zxvdvd}
		\|\left(\nabla\Phi^{0}_s(0)\right)^{-1}\|\leqslant C \qquad \forall s\in(0,s_0).
	\end{equation}
\end{Lemma}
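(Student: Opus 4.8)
The plan is to run the argument of Lemma~\ref{lemma busdhiv} again, with the role of $\sigma=1$ played by $\sigma=0$, of $u_1$ by the (unique, radial) local ground state $u_0$, and of the limit $s\nearrow1$ by $s\searrow0$. Unwinding \eqref{dsgrvx}, differentiation at $w=0$ gives, for all $v,w\in H^1_{\rm rad}(\mathbb{R}^n)$,
\[
	\left\langle \nabla\Phi^0_s(0)[v],w\right\rangle_s
	=\left\langle v,w\right\rangle_s-(p+1)\int_{\mathbb{R}^n}u_0^p\,vw\,dx
	=:B_s(v,w),
\]
so that $\nabla\Phi^0_s(0)$ is the bounded self-adjoint operator on $H^1_{\rm rad}(\mathbb{R}^n)$ induced by $B_s$. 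The form $B_s$ differs from $\langle L_s\,\cdot\,,\cdot\rangle_{L^2}$ only by the term $(p+1)\int(u_s^p-u_0^p)vw$; since $u_s\to u_0$ in $L^{p+2}(\mathbb{R}^n)$ (Lemma~\ref{lemma convergence}), uniformly over $u_s\in\mathcal{M}_s$, we have $u_s^p\to u_0^p$ in $L^{(p+2)/p}(\mathbb{R}^n)$, so this discrepancy is a multiplication operator of norm $o(1)$ from $H^1_{\rm rad}(\mathbb{R}^n)$ into its dual as $s\searrow0$. Hence the lemma reduces to a \emph{uniform} invertibility statement for $L_s$ on $H^1_{\rm rad}(\mathbb{R}^n)$ followed by a perturbation argument.

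For the invertibility of $L_s$ I would use the $s$-orthogonal splitting $H^1_{\rm rad}(\mathbb{R}^n)=\mathbb{R}u_s\oplus\mathcal{N}_s$. Testing \eqref{main equation} against $u_s$ gives $\|u_s\|_s^2=\int_{\mathbb{R}^n}u_s^{p+2}$, whence $L_su_s=-p\,u_s^{p+1}$, and therefore $\langle L_su_s,u_s\rangle_{L^2}=-p\|u_s\|_s^2$ while $\langle L_su_s,w\rangle_{L^2}=-p\langle u_s,w\rangle_s=0$ for every $w\in\mathcal{N}_s$. Thus $L_s$ is block-diagonal for this splitting: it equals $-p$ times the identity in the $s$-inner product on the line $\mathbb{R}u_s$, and by Lemma~\ref{lemma cnidnc} it satisfies $\langle L_sw,w\rangle_{L^2}\geqslant\Pi_0\|w\|_s^2$ on $\mathcal{N}_s$ with $\Pi_0>0$ independent of $s\in(0,s_0]$ and of $u_s\in\mathcal{M}_s$. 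Combined with the uniform bounds of Lemma~\ref{lemma lambda1}, this makes $L_s:H^1_{\rm rad}(\mathbb{R}^n)\to(H^1_{\rm rad}(\mathbb{R}^n))^*$ invertible, with inverse bounded uniformly in $s$ (by $\max\{1/p,1/\Pi_0\}$). A Neumann-series perturbation then yields, for $s$ small enough, the invertibility of $\nabla\Phi^0_s(0)=L_s+(p+1)(u_s^p-u_0^p)$ with a uniform bound on its inverse; solving $B_s(w_s,w)=\langle f,w\rangle_s$ for all $w$ then produces the unique $w_s$ of \eqref{vsfsd} together with the estimate \eqref{zxvdvd}.

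The point requiring care — and the only real difference from a naive transcription of Lemma~\ref{lemma busdhiv} — is that one must resist comparing $\nabla\Phi^0_s(0)$ with the local operator $L_0$ itself: their difference contains $(-\Delta)^s-\mathrm{Id}$, which is \emph{not} uniformly small in the operator norm of $\mathcal{L}(H^1,(H^1)^*)$ (it is of order one on functions whose Fourier support concentrates near the origin, just as $(-\Delta)^s-(-\Delta)$ is of order one at very high frequencies in the $\sigma=1$ case). The argument survives because one compares $\nabla\Phi^0_s(0)$ only to $L_s$, which carries the identical $(-\Delta)^s$ term, so the sole genuine perturbation is the harmless potential difference above. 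The remainder is bookkeeping: shrinking the threshold $s_0$ of Lemma~\ref{lemma cnidnc} if necessary to absorb the various $o(1)$ terms, and invoking uniqueness of the radial ground state in the local limit to identify $u_0$ unambiguously — which is precisely what makes the convergence $u_s\to u_0$, hence all the subsequent estimates, uniform over $\mathcal{M}_s$.
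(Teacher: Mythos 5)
Your argument is correct, and it tracks the route the paper takes for the companion statement Lemma~\ref{lemma busdhiv} (the paper simply states Lemma~\ref{lemma fvjhjjb} as ``a similar argument''): compare the form $\langle \nabla\Phi^0_s(0)[\cdot],\cdot\rangle_s = \langle\cdot,\cdot\rangle_s - (p+1)\int u_0^p(\cdot)(\cdot)$ with the form $\langle L_s\cdot,\cdot\rangle_{L^2}$, note that the difference is the multiplication by $(p+1)(u_s^p-u_0^p)$ which is small as $s\searrow 0$, and then invert the $L_s$ form uniformly using Lemma~\ref{lemma cnidnc}. Your closing remark --- that one must not compare against $L_0$ itself, since $(-\Delta)^s - \mathrm{Id}$ is of order one on low frequencies --- is precisely the point the paper's notation obscures by writing ``$L_1$'' where it means the $\nabla\Phi^1_s(0)$ form carrying the $s$-kinetic part.

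Where you genuinely improve on the paper is the invertibility step. The paper's version (\eqref{bfdg}, together with \eqref{dfis} and \eqref{vdsnkvji}) asserts the numerical-range bound $|\langle L_\sigma v,v\rangle_{L^2}|\geqslant C\|v\|_s^2$ for \emph{all} $v\in H^1_{\rm rad}$, and then invokes Lax--Milgram. As literally stated this fails: since the form is indefinite, a vector $v=au_s+w$ with $w\perp_s u_s$ and $p\,a^2\|u_s\|_s^2 = \langle L_sw,w\rangle_{L^2}$ satisfies $\langle L_sv,v\rangle_{L^2}=0$ (the cross term vanishes by block-diagonality), so the quadratic form has a nontrivial null cone. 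What one actually needs is a spectral gap around zero, and that is exactly what your block-diagonal argument delivers: the operator is $-p\cdot\mathrm{Id}$ on $\mathbb{R}u_s$ and $\geqslant\Pi_0$ on $\mathcal{N}_s$, hence the inverse exists and is bounded by $\max\{1/p,1/\Pi_0\}$ uniformly in $s$. This is the correct (inf-sup / spectral) formulation, and your Neumann-series perturbation then closes the proof cleanly. The rest of your estimates --- the identification of $\nabla\Phi^0_s(0)$, the $L^{(p+2)/p}$ convergence of $u_s^p$ via Lemma~\ref{lemma convergence} and the uniform $L^\infty$ bound, and the passage to \eqref{zxvdvd} --- are all sound.
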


\begin{Lemma}\label{lemma bdfbfd}
	There exists $s_0\in(0,1)$, $r_0>0$ such that for any $s\in(0,s_0)$, there exists a unique function $w^s\in \mathcal{A}^{0}_{r_0,s_0}$ such that 
	\begin{equation}
		\Phi^{0}_s(w^s)=0.
	\end{equation}
	Here $\mathcal{A}^{0}_{r_0,s_0}$ is defined in \eqref{vhjjbbj}.
\end{Lemma}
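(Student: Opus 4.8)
The plan is to mirror the proof of Lemma~\ref{lemma mlfbdfzb} with the limiting parameter $1$ replaced by $0$; the only genuinely new ingredient is a bound for $\Phi^0_s(0)$ reflecting the (logarithmically) degenerate behaviour of $|\xi|^{2s}-1$ near the origin as $s\searrow0$. By Lemma~\ref{lemma fvjhjjb}, $\nabla\Phi^0_s(0)$ is invertible on $H^1_{\rm rad}(\mathbb{R}^n)$ with $\|(\nabla\Phi^0_s(0))^{-1}\|\leqslant C$ uniformly for $s\in(0,s_0)$, so $\Phi^0_s(w)=0$ is equivalent to the fixed point equation
\begin{equation}
	w=T(w):=-\left(\nabla\Phi^0_s(0)\right)^{-1}\left(\Phi^0_s(0)+\Psi^0_s(w)\right),\qquad \Psi^0_s(w):=-\Phi^0_s(0)+\Phi^0_s(w)-\nabla\Phi^0_s(0)[w].
\end{equation}
Since $u_0$ is radial, $\Psi^0_s$ maps $H^1_{\rm rad}(\mathbb{R}^n)$ into itself, and the Mean Value Theorem computations from the proof of Lemma~\ref{lemma mlfbdfzb} apply verbatim with $u_1$ replaced by $u_0$ — using $\|u_0\|_{L^\infty(\mathbb{R}^n)}<\infty$ from Lemma~\ref{lemma uniform bound} — yielding, with $\gamma:=\min\{2,p+1\}>1$ and constants independent of $s$,
\begin{equation}
	\|\Psi^0_s(w)\|_s\leqslant C_1\|w\|_s^\gamma,\qquad \|\Psi^0_s(w_1)-\Psi^0_s(w_2)\|_s\leqslant C_2\left(\|w_1\|_s^{\gamma-1}+\|w_2\|_s^{\gamma-1}\right)\|w_1-w_2\|_s
\end{equation}
for $\|w\|_s,\|w_1\|_s,\|w_2\|_s\leqslant1$.

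The key step — the one genuinely different from the case $s\to1$, and the main obstacle — is to show $\|\Phi^0_s(0)\|_s\leqslant C_3\,s$ with $C_3$ independent of $s$. Since $u_0\in\mathcal{M}_0$ solves $\nabla\mathcal{I}_0(u_0)=0$, the nonlinear terms and the $(1+|\xi|^2)$ parts cancel and one gets, for every $v\in H^1_{\rm rad}(\mathbb{R}^n)$,
\begin{equation}
	\left\langle \Phi^0_s(0),v \right\rangle_s=\left\langle \nabla\mathcal{I}_s(u_0)-\nabla\mathcal{I}_0(u_0),v \right\rangle_{L^2}=\int_{{\mathbb{R}^n}}\left(|\xi|^{2s}-1\right)\hat{u}_0\,\overline{\hat{v}}\, d\xi,
\end{equation}
so $\|\Phi^0_s(0)\|_s$ equals the supremum of the right‑hand side over $\|v\|_s\leqslant1$. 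I would split the integral over $\{|\xi|\geqslant1\}$ and $\{|\xi|<1\}$. On $\{|\xi|\geqslant1\}$, using $|\xi|^{2s}-1=e^{2s\ln|\xi|}-1\leqslant 2s|\xi|^{2s}\ln|\xi|\leqslant 2s|\xi|^2$ for $s\leqslant\tfrac12$ together with Cauchy–Schwarz, the contribution is $\leqslant 2s\|u_0\|_2\|v\|_s$, controlled by Lemma~\ref{lemma 2 norm is bound}. On $\{|\xi|<1\}$, using $1-|\xi|^{2s}\leqslant 2s\ln(1/|\xi|)$, the bound $\|\hat u_0\|_{L^\infty(\mathbb{R}^n)}\leqslant(2\pi)^{-n/2}\|u_0\|_{L^1(\mathbb{R}^n)}$, the integrability $\|u_0\|_{L^1(\mathbb{R}^n)}<\infty$ (cf.~\eqref{fbe}), the local integrability $\int_{B_1}\ln^2(1/|\xi|)\,d\xi<\infty$, and Cauchy–Schwarz, the contribution is $\leqslant C_n\,s\,\|u_0\|_{L^1(\mathbb{R}^n)}\|v\|_s$. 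This mirrors estimate~\eqref{bjk} used in Lemma~\ref{Lemma gamma=0}, and gives the claim.

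Finally, combining the three bounds exactly as in the proof of Lemma~\ref{lemma mlfbdfzb}, one has $\|T(w)\|_s\leqslant C\left(C_3 s+C_1(r_0 s_0)^\gamma\right)$ for $w\in\mathcal{A}^0_{r_0,s_0}$ and the Lipschitz constant of $T$ on $\mathcal{A}^0_{r_0,s_0}$ is at most $2CC_2(r_0 s_0)^{\gamma-1}$. Since $\gamma>1$ and all norms $\|\cdot\|_s$ with $s\in(0,s_0)$ are uniformly equivalent, I would first fix $r_0>0$ and then choose $s_0\in(0,1)$ small enough that $T$ maps $\mathcal{A}^0_{r_0,s_0}$ (defined as in~\eqref{vhjjbbj}) into itself and is a contraction there; the Banach fixed point theorem then furnishes, for each $s\in(0,s_0)$, the unique $w^s\in\mathcal{A}^0_{r_0,s_0}$ with $\Phi^0_s(w^s)=0$. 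Apart from the Fourier‑space estimate for $\|\Phi^0_s(0)\|_s$ near $\xi=0$, the argument is a routine transcription of the $s\to1$ case.
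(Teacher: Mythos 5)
Your proposal follows the paper's proof essentially step for step: the fixed-point reformulation via Lemma~\ref{lemma fvjhjjb}, the $\Psi^0_s$ estimates imported from the $s\to1$ case, the bound $\|\Phi^0_s(0)\|_s\leqslant C_3 s$ obtained by the same Fourier splitting over $\{|\xi|\geqslant1\}$ and $\{|\xi|<1\}$ that underlies \eqref{bjk}, and the Banach fixed-point conclusion. The only difference is cosmetic — you spell out the $|\xi|<1$ estimate in more detail than the paper, which simply cites \eqref{bjk}.
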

\begin{proof}
	We observe that Lemma~\ref{lemma fvjhjjb} allows us to transform 
	the equation $	\Phi^{0}_s(w)=0$  to the following fixed point equation
	\begin{equation}\label{bdfhtd}
		w=-\left(\nabla\Phi^{0}_s(0)\right)^{-1}\left(\Phi^{0}_s(0)+\Psi^{0}_s(w)\right)
	\end{equation}
	where $\Psi^{0}_s(w):=-\Phi^{0}_s(0)+\Phi^{0}_s(w)-\nabla\Phi^{0}_s(0)[w]$. Since $u_{0}\in\mathcal{M}_{0}$ is radial, by the definition of $\Phi_s^{0}$, it follows that $ \Psi_s^{0}$ is radial.

%%	\begin{equation}\label{fsvfd}
%%		\begin{split}
%%			\left\langle \Psi^{s_0}_s(w),v \right\rangle_{s}
%%%			&=-\left\langle \Phi^1_s(0),v \right\rangle_{s}+\left\langle \Phi^1_s(w),v \right\rangle_{s}-\left\langle \nabla\Phi^1_s(0)[w],v \right\rangle_{s}\\
%%			&=\int_{{\mathbb{R}^n}}u_{s_0}^{p+1}v\, dx-\int_{{\mathbb{R}^n}}(u_{s_0}+w)^{p+1}v\,dx+(p+1)\int_{{\mathbb{R}^n}}u_{s_0}^pwv\, dx.
%%		\end{split}
%%	\end{equation}
%	
%%	\begin{equation}\label{key}
%%		\left|(u_1+w)^{p+1}-u_1^{p+1}-(p+1)u_1^pw\right|\leqslant\begin{cases}
%%			C_p |w|^{p+1}  & 0<p<1\\
%%			C_p \left(|w|^2\|u_1\|^{p-1}_{L^\infty(\mathbb{R}^n)}+|w|^{p+1}\right) &p\geqslant 1.
%%		\end{cases}
%%	\end{equation}
%%	From this and~\eqref{fsvfd}, it follows that
%%	\begin{equation}\label{key}
%%		\left|\left\langle \Psi^{s_0}_s(w),v \right\rangle_{s}\right|\leqslant	\begin{cases}
%%			C_p\|w\|^{p+1}_{L^{p+2}(\mathbb{R}^n)}\|v\|_{L^{p+2}(\mathbb{R}^n)} & 0<p<1\\
%%			C_{p,u_{s_0}}\left(\|w\|^2_{L^{\frac{4n}{n+2}}(\mathbb{R}^n)}\|v\|_{L^{2^*}(\mathbb{R}^n)}+\|w\|^{p+1}_{L^{p+2}(\mathbb{R}^n)}\|v\|_{L^{p+2}(\mathbb{R}^n)} \right) & p\geqslant 1.
%%		\end{cases}
%%	\end{equation}
%%	Recall that $\gamma=\min\left\{2,p+1\right\}$.
%%   we then have that for every $\|w\|_s<1$,

Furthermore, 	by repeating the procedure in Lemma~\ref{lemma mlfbdfzb}, we infer that 
for every $v\in H^1_{\rm rad}(\mathbb{R}^n)$ and $\|w_1\|_s<1$ and $\|w_2\|_s<1$,
	\begin{equation}\label{vsdvs}
			\|\Psi^{0}_s(w_1)\|\leqslant C_1\|w_1\|^\gamma_s,
		\end{equation}
and
\begin{equation}\label{ vdvdsv}
	\|\Psi^{0}_s(w_1)-\Psi^{0}_s(w_2)\|\leqslant C_2\left(\|w_1\|^{\gamma-1}_{s}+\|w_2\|^{\gamma-1}_{s}\right)\|w_1-w_2\|_{s},
\end{equation}
where the positive constants $C_1$ and $C_2$ is independent of $s$, and $\gamma=\min\left\{p+1,2\right\}$.

%%in the light of \eqref{vsdgsdv} and \eqref{vjb k}, by combining Sobolev inequality and Lemma~\ref{lemma uniform bound}, one has that  for every $v\in H^1_{\rm rad}(\mathbb{R}^n)$ and $\|w_1\|_s<1$ and $\|w_2\|_s<1$,
%%	\begin{equation}\label{vsdvs}
%%		\|\Psi^{s_0}_s(w_1)\|\leqslant C_1\|w_1\|^\gamma_s,
%%	\end{equation}
%%and
%%\begin{equation}\label{ vdvdsv}
%%	\|\Psi^{s_0}_s(w_1)-\Psi^{s_0}_s(w_2)\|\leqslant C_2\left(\|w_1\|^{\gamma-1}_{s}+\|w_2\|^{\gamma-1}_{s}\right)\|w_1-w_2\|_{s}.
%%\end{equation}
%%where the positive constants $C_1$ and $C_2$ is independent of $s$, and $\gamma=\min\left\{p+1,2\right\}$.
%	
%	
%	
%%	Also, by~\eqref{vjb k}, 
%%we have that 
%%	\begin{equation}\label{key}
%%		\begin{split}
%%			&\left|(u_1+w_1)^{p+1}-(u_1+w_2)^{p+1}-(p+1)u_1^p(w_1-w_2)\right|\\
%%			\leqslant&\begin{cases}
%%				C_p \|u_1\|_{L^\infty(\mathbb{R}^n)}\left(|w_1|^{p}+|w_2|^p\right)|w_1-w_2|  \qquad    \qquad    \qquad   \qquad   \qquad 0<p<1\\
%%				C_p \left(\|u_1\|_{L^\infty(\mathbb{R}^n)}\left(|w_1|^{p}+|w_2|^p\right)+\|u_1\|^p_{L^\infty(\mathbb{R}^n)}\left(|w_1|+|w_2|\right)\right)|w_1-w_2|\quad  p\geqslant 1.
%%			\end{cases}
%%		\end{split}
%%	\end{equation}
%%	it follows that 
%%	\begin{equation}\label{vfdvdf}
%%		\begin{split}
%%			&	\|\Psi^{s_0}_s(w_1)-\Psi^{s_0}_s(w_2)\|\\
%%			\leqslant&\begin{cases}
%%				C_{p,u_1}\left(\|w_1\|^{p}_{s}+\|w_2\|^{p}_{s}\right)\|w_1-w_2\|_{s} \qquad \qquad \quad \qquad 0<p<1\\	 
%%				C_{p,u_1}\left(\|w_1\|^{p}_{s}+\|w_2\|^{p}_{s}+\|w_1\|_{s}+\|w_2\|_{s}\right)\|w_1-w_2\|_{s} \qquad  p\geqslant1.
%%			\end{cases}
%%		\end{split}
%%	\end{equation}

Now we claim that there exists a positive  constant $C_3$ independent of $s$ such that
\begin{equation}\label{nvksdvjj}
	\|\Phi_s^0(0)\|_s\leqslant C_3s.
\end{equation}

Indeed, using the fact that $ \left\langle \nabla \mathcal{I}_0(u_0),v \right\rangle_{L^2}=0$ for every $v\in H^1_{\rm rad}(\mathbb{R}^n)$, owing to~\eqref{bjk}, one derives that, for every $s<1/2$
\begin{equation}
		\left|\left\langle \Phi^0_s(0),v \right\rangle_{s}\right|=\left|\left\langle \nabla \mathcal{I}_s(u_0)-\nabla\mathcal{I}_0(u_0),v \right\rangle_{L^2}\right|\leqslant \int_{{\mathbb{R}^n}}\left|1-|\xi|^{2s}\right|\,|\hat{u}_0|\,|\hat{v}|\, d\xi\leqslant C\, s\|u_0\|_{2}\|v\|_{L^2(\mathbb{R}^n)}
\end{equation}
 where the constant $C>0$ is independent of $s$. This gives the claim~\eqref{nvksdvjj}.

%In addition, we have that for every $s_0-s<\frac{1}{5}$ 
%	\begin{equation}\label{ dfvdfvd}
%		\|\Phi_{s}^{s_0}(0)\|\leqslant C_3(s_0-s)
%	\end{equation}
%	for some constant $C_3$ independent of $s$.
	
%%	Indeed, by taking into account the fact that $ \left\langle \nabla \mathcal{I}_{s_0}(u_{s_0}),v \right\rangle_{L^2}=0$ for every $v\in H^1_{\rm rad}(\mathbb{R}^n)$, we see that for every $s_0-s<\frac{1}{5}$ 
%%	\begin{equation}
%%		\begin{split}
%%			\left|\left\langle \Phi^{s_0}_s(0),v \right\rangle_{s}\right|&=\left|\left\langle \nabla \mathcal{I}_s(u_{s_0})-\nabla\mathcal{I}_{s_0}(u_{s_0}),v \right\rangle_{L^2}\right|\\
%%			&\leqslant \int_{{\mathbb{R}^n}}\left||\xi|^{2s_0}-|\xi|^{2s}\right|\hat{u}_{s_0}\,\hat{v}\, d\xi\\
%%			&\leqslant C_{n}(1+\frac{1}{\delta})|s_0-s| \int_{{\mathbb{R}^n}}\left(1+|\xi|^{2s_0+\delta+2s_0-2s}\right)\hat{u}_{s_0}\,\hat{v}\, d\xi\\
%%			&\leqslant C_{n}|s_0-s|\, \|u_{s_0}\|_{2}\, \|v\|_{s},
%%		\end{split}
%%	\end{equation}
%%	where we set $\delta=\frac{2}{5}$. This and Lemma~\ref{lemma 2 norm is bound} imply the claim~\eqref{ dfvdfvd}. 

	Finally, we shall prove that there exist  $s_0, r_0\in(0,1)$  such that for every $s\in(0,s_0)$, one can find a unique function $w^s\in \mathcal{A}^{0}_{r_0,s_0}$  solving the fixed equation~\eqref{bdfhtd}.
	
	Indeed, from~\eqref{zxvdvd},~\eqref{vsdvs} and~\eqref{nvksdvjj}, it follows that,  for every $w\in \mathcal{A}^{0}_{r,s}$ 
	\begin{equation}
		\|-\left(\nabla\Phi^{0}_s(0)\right)^{-1}\left(\Phi^{0}_s(0)+\Psi^{0}_s(w)\right)\|\leqslant C\left(C_3s+C_1r^\gamma\,s^\gamma\right).
	\end{equation}  
	Moreover, since $\gamma=\min\left\{2,p+1\right\}>1$, thus there exist $r_0>0$ sufficiently large and $s_0\in(0,1)$ depending on $r_0$ such that for any $s\in(0,s_0)$, one has that 
	\begin{equation}
		r_0\,s_0> C\left(C_3s+C_1r_0^\gamma\,s_0^\gamma\right).
	\end{equation}
	As a result, for every $s\in(0,s_0)$, the mapping 
	\begin{equation}
		T_{0}:	w\longmapsto -\left(\nabla\Phi^{0}_s(0)\right)^{-1}\left(\Phi^{0}_s(0)+\Psi^{0}_s(w)\right)
	\end{equation} 
	maps $\mathcal{A}^{0}_{r_0,s_0}$ into itself. 
	
	Moreover, by combining~\eqref{zxvdvd} with \eqref{ vdvdsv}, for every $w_1,w_2\in \mathcal{A}^{0}_{r_0,s_0}$,  one has that
	\begin{equation}\label{bvjdsvnjsd}
		\|T_{0}w_1-T_{0}w_2\|_s\leqslant C\|\Psi^{0}_s(w_1)-\Psi^{0}_s(w_2)\|_s\leqslant 2CC_2\left(r_0\,s_0\right)^{\gamma-1}\|w_1-w_2\|_{s}.
	\end{equation}
	As a consequence of this, increasing $s_0$ if necessary,  the map $T_{0}$ is a contraction on $\mathcal{A}^{0}_{r_0,s_0}$. By the Banach fixed point theorem, for any 
	$s\in(0,s_0)$, there exists a unique function $w^s\in \mathcal{A}^{0}_{r_0,s_0}$ solving the equation~\eqref{bdfhtd}. This ends the proof of Lemma~\ref{lemma bdfbfd}.
\end{proof}

 We  now complete the proof of Theorem~\ref{th:uniqueness}.
 
\begin{proof}[Proof of Theorem~\ref{th:uniqueness}]
	Let $u_s^1\in\mathcal{M}_s$ and $u_s^2\in\mathcal{M}_s$. 
	
	We now claim that $u_s^1=u_s^2$ provided $s$ is close to $0$.
	
	Indeed, by combining Lemma~\ref{lemma convergence} and the uniqueness result for $s=0$, we know that $u_s^i=u_{0}+w_s^i$ with $\|w_s^i\|_{2}\rightarrow 0$ as $s\searrow 0$, for $i=1,2$. Since $w_s^i$ is symmetric with respect to the origin for $i=1,2$, on account of~\eqref{dsgrvx}, one has that $\Phi^{0}_s(w_s^i)=0$. Thus, by Lemma~\ref{lemma bdfbfd}, we conclude  that $w_s^1=w_s^2$
	when $s$ is close to $0$.
	
	As a result, from this and Theorem~\ref{th:uniqueness s close to 1}, we can conclude that there exist  $s_0,s_1$ such that for every $s\in(0,s_0)\cup(s_1,1)$, the solution of problem~\eqref{main equation} $u_s\in\mathcal{M}_s$ is unique, up to a translation, we can obtain Theorem~\ref{th:uniqueness}, as desired.
	 \end{proof}

\appendix

\section{Existence and Properties of Ground states}\label{sec:Existence and Properties of Ground states }
	In this part, we provide some details of the proof of Theorem~\ref{th: existence and properity} inferred from \cite{DSVZ24}.

	 \smallskip
	 
	{Step 1.  Existence (P.L.Lions, 1984).} We first denote $I_s(u):=\|u\|_{s}^2$. Let $ \left\{u_k\right\}\subset H^1(\mathbb{R}^n)$ with $\|u_k\|_{L^{p+2}(\mathbb{R}^n)}=1$ be a minimizing sequence for $I_s(u)$, that is,
	\[ \lim\limits_{k\rightarrow\infty}I_s(u_k)=\lambda_s.\] 
 Since $ I_s(|u|)\leqslant I_s(u)$ holds without loss of generality, we can suppose that $u_k$ is nonnegative. It is clear that  the infimum $\lambda_s$ is bounded, thus one has that $u_k$ is uniformly bounded in $H^1(\mathbb{R}^n)$. Hence, there exists $u\in H^1(\mathbb{R}^n)$ such that $u_k\rightharpoonup u$ weakly in $H^1(\mathbb{R}^n)$ and pointwise a.e. in $\mathbb{R}^n$. Fatou Lemma and  \cite[Lemma~2.18]{MR1181725} imply that 
	\begin{equation*}
	\delta:=	\liminf_{k\to+\infty}\sup\limits_{y\in\mathbb{R}^n}\int_{B_{1}(y)}|u_k(x)|^2\, dx>0.
	\end{equation*}
Going if necessary to a subsequence, we may assume the existence of
$y_k\in\mathbb{R}^n$ such that
\begin{equation}\label{vsdfaf}
	\int_{B_{1}(y_k)}|u_k(x)|^2\, dx>{\delta}/{2}.
\end{equation}
Let us define $v_k(x):=u_k(x+y_k)$. Hence  $\|v_k\|_{L^{p+2}(\mathbb{R}^n)}=1$, $\|v_k\|_{s}^2\rightarrow \lambda_s$, and 
\begin{equation}\label{sdvdfd}
	\int_{B_{1}(0)}|v_k(x)|^2\, dx>{\delta}/{2}.
\end{equation}
Since $v_k$ is bounded in $H^1(\mathbb{R}^n)$,   there exists $v\in H^1(\mathbb{R}^n)$ such that $v_k\rightharpoonup v$ weakly in $H^1(\mathbb{R}^n)$ and pointwise a.e. in $\mathbb{R}^n$.
By Lemma~1.32 in \cite{Willembook}, one has that 
\begin{equation}\label{dsvds}
	1=\|v\|^{p+2}_{L^{p+2}(\mathbb{R}^n)}+\lim\limits_{k\rightarrow\infty}\|v-v_k\|^{p+2}_{L^{p+2}(\mathbb{R}^n)}.
\end{equation}
Hence, we deduce that
\begin{equation}\label{fdvdsv}
	\begin{split}
\lambda_s=\lim\limits_{k\rightarrow\infty}\|v_k\|_{s}^2=&\|v\|_{s}^2+\lim\limits_{k\rightarrow\infty}\|v-v_k\|_{s}^2\\
\geqslant&\lambda_s\left(\|v\|^2_{L^{p+2}(\mathbb{R}^n)}+\left(1-\|v\|^{p+2}_{L^{p+2}(\mathbb{R}^n)}\right)^{2/(p+2)}\right)\geqslant \lambda_s.	
\end{split}
\end{equation}
The last inequality holds since $ a^{\frac{2}{p+2}}+(1-a)^{\frac{2}{p+2}}\geqslant 1$ for every $a\in[0,1].$   
By combining~\eqref{sdvdfd} and Fatou Lemma, one has that $v\neq 0$. Thus, we obtain that $\|v\|^{p+2}_{L^{p+2}(\mathbb{R}^n)}=1$ and $\|v\|_{s}^2=\lambda_s=\lim\limits_{k\rightarrow\infty}\|v_k\|_{s}^2 $. This implies that $v$ is a
minimizer for $\lambda_s$.

Moreover, we note that the minimizer $v$ satisfies 
	\[ \partial\bigg|_{\epsilon=0}J_s(v+\epsilon\varphi)=0 \quad \forall \varphi\in C_0^\infty(\mathbb{R}^n). \]
	A calculation indicates that the function $v$ solves the 
	\begin{equation}
			- \Delta v +  (-\Delta)^s v+v = \lambda_sv^{p+1} \quad \hbox{in $\mathbb{R}^n$.}
		\end{equation}
	Moreover, we define by $	u_s:=\lambda_s^{\frac{1}{p}}v$
	belonging to $ \mathcal{M}_s$.

	\bigskip
	
	{Step 2. Regularity and decay of solutions.}

	 To begin with, we prove the following $H^2$-regularity Lemma, which in turn will serve as the basic step to prove the  $C^2$-regularity of solutions to equation~\eqref{main equation} when $n=1$. 
\begin{Lemma}\label{lemma H^2}
Let $n\geqslant 1$ and  $u\in H^1(\mathbb{R}^n)\cap L^\infty(\mathbb{R}^n)$ be a weak solution of equation~\eqref{main equation}. Then $u\in H^2(\mathbb{R}^n)$. Moreover,  there exists a constant $C>0$  such that 
\begin{equation}\label{H^2 estimate}
	\|u\|_{H^2(\mathbb{R}^n)}\leqslant C\left(\|u\|_{H^1(\mathbb{R}^n)},\|u\|_{L^\infty(\mathbb{R}^n)},n,s,p\right).
\end{equation}
\end{Lemma}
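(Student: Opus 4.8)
The plan is to carry out the regularity gain entirely on the Fourier side. The guiding observation is twofold: first, the hypothesis $u\in L^\infty(\mathbb{R}^n)$ promotes the nonlinearity $u^{p+1}$ from $L^1_{\mathrm{loc}}$ to $L^2(\mathbb{R}^n)$; second, the Fourier symbol of the operator $1+(-\Delta)+(-\Delta)^s$, namely $1+|\xi|^2+|\xi|^{2s}$, dominates $|\xi|^2$ pointwise. Putting these together in the weak formulation \eqref{weak solution} immediately produces the missing bound $\int_{\mathbb{R}^n}|\xi|^4|\hat u|^2\,d\xi<\infty$, which is precisely the $H^2$-information, together with the quantitative estimate \eqref{H^2 estimate}.

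Concretely, I would proceed in three short steps. \emph{Step 1.} Since $u\in L^\infty(\mathbb{R}^n)$ and $p>0$, one has $u^{p+1}\in L^2(\mathbb{R}^n)$ with
\[ \|u^{p+1}\|_{L^2(\mathbb{R}^n)}\leqslant \|u\|^{p}_{L^\infty(\mathbb{R}^n)}\,\|u\|_{L^2(\mathbb{R}^n)}. \]
\emph{Step 2.} Test the weak formulation \eqref{weak solution} against $\varphi\in H^1(\mathbb{R}^n)$ with $\hat\varphi\in C_c^\infty(\mathbb{R}^n)$ and apply Plancherel's identity to the right-hand side (licit, since $u^{p+1},\varphi\in L^2(\mathbb{R}^n)$); this gives
\[ \int_{\mathbb{R}^n}\Bigl[(1+|\xi|^2+|\xi|^{2s})\,\hat u-\widehat{u^{p+1}}\Bigr]\overline{\hat\varphi}\,d\xi=0 \]
for all such $\varphi$. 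Since $\widehat{u^{p+1}}\in L^2(\mathbb{R}^n)$ and $(1+|\xi|^2+|\xi|^{2s})\hat u\in L^1_{\mathrm{loc}}(\mathbb{R}^n)$ (because $\hat u\in L^2(\mathbb{R}^n)$ and $|\xi|^{2s}\in L^2_{\mathrm{loc}}(\mathbb{R}^n)$ for every $s\in(0,1)$, $n\geqslant 1$), both bracketed terms define distributions and we conclude
\[ (1+|\xi|^2+|\xi|^{2s})\,\hat u(\xi)=\widehat{u^{p+1}}(\xi)\qquad\text{for a.e. }\xi\in\mathbb{R}^n. \]
\emph{Step 3.} From $|\xi|^2\leqslant 1+|\xi|^2+|\xi|^{2s}$ we get $|\xi|^2|\hat u(\xi)|\leqslant|\widehat{u^{p+1}}(\xi)|$ a.e., hence
\[ \int_{\mathbb{R}^n}|\xi|^4|\hat u|^2\,d\xi\leqslant\int_{\mathbb{R}^n}|\widehat{u^{p+1}}|^2\,d\xi=\|u^{p+1}\|^2_{L^2(\mathbb{R}^n)}<\infty, \]
so $u\in H^2(\mathbb{R}^n)$, and combining with $u\in H^1(\mathbb{R}^n)$ and Step 1,
\[ \|u\|^2_{H^2(\mathbb{R}^n)}\leqslant C\Bigl(\|u\|^2_{H^1(\mathbb{R}^n)}+\|u^{p+1}\|^2_{L^2(\mathbb{R}^n)}\Bigr)\leqslant C\bigl(1+\|u\|^{2p}_{L^\infty(\mathbb{R}^n)}\bigr)\|u\|^2_{H^1(\mathbb{R}^n)}, \]
with $C$ an absolute constant coming only from the equivalence of the two natural $H^2$-norms; this is \eqref{H^2 estimate}.

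I do not expect a serious obstacle: this is essentially a one-step Fourier bootstrap. The only points needing a little care are (i) deriving the a.e. identity $(1+|\xi|^2+|\xi|^{2s})\hat u=\widehat{u^{p+1}}$ \emph{without} circularly presupposing $|\xi|^2\hat u\in L^2$ — which is why in Step 2 one first reads it in $L^1_{\mathrm{loc}}(\mathbb{R}^n)$ (equivalently in $\mathcal{D}'(\mathbb{R}^n)$) and only afterwards upgrades it; and (ii) the elementary bound $\|u^{p+1}\|_{L^2}\leqslant\|u\|_{L^\infty}^p\|u\|_{L^2}$, which is exactly where the hypothesis $u\in L^\infty(\mathbb{R}^n)$ is used. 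If one prefers to avoid the Fourier transform, the same conclusion follows from a difference-quotient argument: translate the equation, test with $\tau_h u-u$, use that $\|\nabla(\tau_h u-u)\|_{L^2}$, $\|(-\Delta)^{s/2}(\tau_h u-u)\|_{L^2}$ and $\|\tau_h u-u\|_{L^2}$ are controlled together with $\|\tau_h(u^{p+1})-u^{p+1}\|_{L^2}\leqslant\|u\|_{L^\infty}^p\|\tau_h u-u\|_{L^2}$, and let $h\to0$; but the Fourier route above is shorter and delivers the explicit constant directly.
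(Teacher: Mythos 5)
Your proof is correct, and it takes a genuinely different route than the paper's. The paper proves Lemma~\ref{lemma H^2} by the classical difference-quotient method: pick a cutoff $\varphi$ supported in $B_{2R}$, test the weak formulation against $v=-D_k^{-h}(\varphi^2 D_k^h u)$, estimate the cross terms (including the nonlocal one, which is handled by splitting and absorbing a piece of $J_1$), obtain a local bound on $\int_{B_{R/2}}|D^2u|^2$, and let $R\to\infty$. Your argument instead stays entirely on the Fourier side: since $u\geqslant 0$ lies in $L^\infty\cap L^2$, the right-hand side $u^{p+1}$ lies in $L^2$, so the weak identity \eqref{weak solution} tested against $\varphi$ with $\hat\varphi\in C_c^\infty$ yields, by Plancherel, that $(1+|\xi|^2+|\xi|^{2s})\hat u-\widehat{u^{p+1}}$ annihilates $C_c^\infty$ and is locally integrable, hence vanishes a.e.; the pointwise bound $|\xi|^2\leqslant 1+|\xi|^2+|\xi|^{2s}$ then gives $|\xi|^2\hat u\in L^2$ directly. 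Your care in first reading the identity in $\mathcal D'$ and only afterwards upgrading the integrability avoids the circularity you flagged, and the final estimate $\|u\|_{H^2}^2\lesssim (1+\|u\|_{L^\infty}^{2p})\|u\|_{H^1}^2$ is indeed what comes out.

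Two remarks on trade-offs. First, your Fourier route produces a constant that is manifestly independent of $s\in(0,1)$, whereas the paper's difference-quotient argument carries an $s$-dependence (through the normalization $c_{n,s}$ appearing when estimating the nonlocal cross term). In a paper whose whole point is uniformity near $s\to 0^+$ and $s\to 1^-$ this is a real advantage; the paper recovers the uniform $H^2$-bound only afterwards, in Lemma~\ref{lemma 2 norm is bound}, by essentially your Step~3 applied once $H^2$-membership is already known. Second, the Fourier argument exploits the constant-coefficient, whole-space structure; the difference-quotient method would survive localization or variable coefficients, but that flexibility isn't needed here. One tiny inaccuracy in your writeup: the justification ``$|\xi|^{2s}\in L^2_{\rm loc}$'' is weaker than what holds and not quite what you need to invoke; since $s>0$, the symbol $1+|\xi|^2+|\xi|^{2s}$ is continuous, hence bounded on compacts, so the bracketed term is locally in $L^2$ simply because $\hat u$ is. The conclusion you draw is unaffected.
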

\begin{proof}
	Let $\varphi\in C^\infty_0(\mathbb{R}^n,\mathbb{R})$ be a cutoff function satisfying 
	\begin{itemize}
	\item[(i)]	$\varphi\equiv1$ on $B_R:=\left\{x\in\mathbb{R}^n:\, |x|\leqslant R\right\}$ with $R\gg1$ and supp$(\varphi)\subset B_{2R}$;
	\item[(ii)]  $0\leqslant \varphi\leqslant 1$  and $|\nabla \varphi|\leqslant \frac{2}{R}<1$ on $\mathbb{R}^n$.
	\end{itemize}
	For every fixed $k\in\left\{1,\dots,n\right\}$ and every $0<|h|<R$, we set 
	\[ v:=-D_k^{-h}(\varphi^2 D_k^{h}u), \quad\text{ where }  D_k^{h}w(x):=\frac{w(x+he_k)-w(x)}{h}.\]
 We notice that, since $u\in H^1(\mathbb{R}^n)$, then $D_k^{h}u\in H^1(\mathbb{R}^n)$. In view of (i), we see that $v\in H^1(\mathbb{R}^n)$ and supp$(v)\subset B_{3R}.$ 
 
 Therefore, we are in a position of using $v$ as a test function in equation~\eqref{main equation}, obtaining that 
 \begin{equation}
 	\begin{split}\label{dvsvss}
 	\int_{{\mathbb{R}^n}}&\varphi^2 |D_k^h(\nabla u)|^2\, dx +\int_{{\mathbb{R}^n}} 2\varphi\, D_k^hu\, \nabla\varphi\, D_k^h(\nabla u)\, dx\\
 	&+\frac{c_{n,s}}{2} \int_{{\mathbb{R}^{2n}}}\frac{(D_k^hu(x)-D_k^hu(y))(\varphi^2(x)D_k^hu(x)-\varphi^2(y)D_k^hu(y))}{|x-y|^{n+2s}}\, dxdy\\
 	&=\int_{{\mathbb{R}^n}} (-u+u^{p+1}) v\, dx.
 	\end{split}
 \end{equation}
	Now, by   the classical Young inequality, we obtain the following estimate
	
	\begin{equation}
		\begin{split}\label{bjckx}
		\left|\int_{{\mathbb{R}^n}} 2\varphi\, D_k^hu\, \nabla\varphi\, D_k^h(\nabla u)\, dx\right|&\leqslant 2\int_{{\mathbb{R}^n}} 2\varphi\, |D_k^hu|\, |\nabla\varphi|\, |D_k^h(\nabla u)|\, dx\\
		&\leqslant \frac{1}{2} \int_{{\mathbb{R}^n}} \varphi^2\, |D_k^h(\nabla u)|\, dx+2 (\sup\limits_{\mathbb{R}^n}|\nabla\varphi|)\int_{{\mathbb{R}^n}}|D_k^hu|\, dx\\
		&\leqslant  \frac{1}{2} \int_{{\mathbb{R}^n}} \varphi^2\, |D_k^h(\nabla u)|\, dx+C_n\int_{{\mathbb{R}^n}}|\nabla u|^2\, dx,
		\end{split}
	\end{equation}
	where in the last inequality, we used \cite[Chapter~5.8.2, Theorem~3]{MR2597943}.  By combining~\eqref{dvsvss} with~\eqref{bjckx}, we see that 
	\begin{equation}
		\begin{split}\label{vsdvsd}
			\frac{1}{2}\int_{{\mathbb{R}^n}}&\varphi^2 |D_k^h(\nabla u)|^2\, dx -C_n\int_{{\mathbb{R}^n}}|\nabla u|^2\, dx+\frac{c_{n,s}}{2} \int_{{\mathbb{R}^{2n}}}\frac{(D_k^hu(x)-D_k^hu(y))(\varphi^2(x)D_k^hu(x)-\varphi^2(y)D_k^hu(y))}{|x-y|^{n+2s}}\, dxdy\\
			&\qquad \qquad\leqslant \int_{{\mathbb{R}^n}} (-u+u^{p+1}) v\, dx.
		\end{split}
	\end{equation}
	Using Young's inequality, one has that 
	\begin{equation}\label{fvfdsda}
		\begin{split}
		\left|\int_{{\mathbb{R}^n}} (-u+u^{p+1}) v\, dx\right|&\leqslant \epsilon \int_{{\mathbb{R}^n}} v^2\, dx+\frac{1}{\epsilon} \int_{{\mathbb{R}^n}}(-u+u^{p+1})^2\, dx\\
		&\leqslant \epsilon C_n\left(\int_{{\mathbb{R}^n}}|\nabla u|^2\, dx+\int_{{\mathbb{R}^n}} \varphi^2\, |D_k^h(\nabla u)|\, dx\right) +\frac{\Lambda}{\epsilon} \int_{{\mathbb{R}^n}}u^2\, dx\\
		&\leqslant \frac{1}{4} \left(\int_{{\mathbb{R}^n}}|\nabla u|^2\, dx+\int_{{\mathbb{R}^n}} \varphi^2\, |D_k^h(\nabla u)|\, dx\right) +4{\Lambda} \int_{{\mathbb{R}^n}}u^2\, dx
		\end{split}
	\end{equation}
	where $\Lambda=(1+\|u\|_{L^\infty(\mathbb{R}^n)})$. From this and~\eqref{vsdvsd}, one deduces that 
		\begin{equation}
		\begin{split}\label{vdfvdsdsd}
			\frac{1}{4}\int_{{\mathbb{R}^n}}&\varphi^2 |D_k^h(\nabla u)|^2\, dx+\frac{c_{n,s}}{2} \int_{{\mathbb{R}^{2n}}}\frac{(D_k^hu(x)-D_k^hu(y))(\varphi^2(x)D_k^hu(x)-\varphi^2(y)D_k^hu(y))}{|x-y|^{n+2s}}\, dxdy\\
			&\leqslant  C_n\int_{{\mathbb{R}^n}}|\nabla u|^2\, dx +4{\Lambda} \int_{{\mathbb{R}^n}}u^2\, dx,
		\end{split}
	\end{equation}
up to relabelling $C_n>0$.

We now provide an estimate of the nonlocal term on the left-hand side of~\eqref{vdfvdsdsd}:
\begin{equation}\label{vdvsd}
	J:=  \int_{{\mathbb{R}^{2n}}}\frac{(D_k^hu(x)-D_k^hu(y))(\varphi^2(x)D_k^hu(x)-\varphi^2(y)D_k^hu(y))}{|x-y|^{n+2s}}\, dxdy.
\end{equation}
We first notice that, with obvious algebraic manipulation, we can write

\begin{equation}\label{vdsvsd}
\begin{split}
	J&=  \int_{{\mathbb{R}^{2n}}}\varphi^2(x)\frac{(D_k^hu(x)-D_k^hu(y))^2}{|x-y|^{n+2s}}\, dxdy +\int_{{\mathbb{R}^{2n}}}D_k^hu(y)\frac{(D_k^hu(x)-D_k^hu(y))(\varphi^2(x)-\varphi^2(y))}{|x-y|^{n+2s}}\, dxdy\\	
	&=:J_1+J_2.
\end{split}
\end{equation}
For $J_2$, since $\varphi\in C^\infty_0(\mathbb{R}^n, \mathbb{R})$,  using Young's inequality, we see that
 \begin{equation}
 	\begin{split}\label{vskvns}
 	|	J_2|&=  \int_{{\mathbb{R}^{2n}}}D_k^hu(y)(\varphi(x)+\varphi(y))\frac{(D_k^hu(x)-D_k^hu(y))(\varphi(x)-\varphi(y))}{|x-y|^{n+2s}}\, dxdy\\	
 		&\leqslant \frac{1}{8} \int_{{\mathbb{R}^{2n}}}(\varphi(x)+\varphi(y))^2\frac{(D_k^hu(x)-D_k^hu(y))^2}{|x-y|^{n+2s}}\, dxdy +8\int_{{\mathbb{R}^{2n}}}|D_k^hu(y)|^2\frac{(\varphi(x)-\varphi(y))^2}{|x-y|^{n+2s}}\, dxdy\\	
 		&\leqslant \frac{J_1}{2}+ C_{n,s} \int_{{\mathbb{R}^{n}}}|\nabla u|^2\, dx.
  	\end{split}
 \end{equation}
Thus, from~\eqref{vdsvsd} and~\eqref{vskvns}, it follows that 
\begin{equation}\label{bdbfd}
	J\geqslant \frac{J_1}{2}-C_{n,s} \int_{{\mathbb{R}^{n}}}|\nabla u|^2\, dx.
\end{equation}
Recalling~\eqref{vdfvdsdsd}, by combining~\eqref{vdvsd}  with~\eqref{bdbfd}, one has that 
\begin{equation}\label{sdvsd}
		\frac{1}{4}\int_{{\mathbb{R}^n}}\varphi^2 |D_k^h(\nabla u)|^2\, dx+\frac{J_1}{2}\leqslant  C_{n,s}\int_{{\mathbb{R}^n}}|\nabla u|^2\, dx +4{\Lambda} \int_{{\mathbb{R}^n}}u^2\, dx,
\end{equation}
up to relabeling $C_{n,s}>0$. From this, since $J_1\geqslant 0$, we obtain that 
\begin{equation}\label{vsdsf}
	\int_{B_{R}} |D_k^h(\nabla u)|^2\, dx\leqslant  C(n,s,p,  \|u\|_{H^1(\mathbb{R}^n)},\|u\|_{L^\infty(\mathbb{R}^n)}),
\end{equation}
and thus, owing to \cite[Chapter~5.8.2, Theorem 3]{MR2597943}, we conclude that  
\begin{equation}\label{ fdvsdvs}
	\int_{B_{R/2}} |D^2 u|^2\, dx\leqslant  C(n,s,p,  \|u\|_{H^1(\mathbb{R}^n)},\|u\|_{L^\infty(\mathbb{R}^n)}).
\end{equation}
Taking $R\rightarrow \infty$, we obtain the desired result~\eqref{H^2 estimate}.
\end{proof}

\begin{proof}[Proof of Theorem~\ref{th: existence and properity}]
In light of the qualitative  results of  solutions for equation~\eqref{main equation}
in \cite{DSVZ24}, it is immediate to deduce the assertions of part (ii) for  space dimension $n\geqslant 2$, and it suffices to show that the solutions $u_s\in H^1(\mathbb{R})$ of equation~\eqref{main equation} belong to $C^2(\mathbb{R})$ when $n=1$.

Indeed,  from Lemmata~\ref{lemma uniform bound} and~\ref{lemma H^2}, we know that $u_s\in L^\infty(\mathbb{R})\cap H^2(\mathbb{R})$.  By Sobolev embedding Theorems, one has  that $u_s\in C^{1,1/2}(\mathbb{R})$ and $u_s(|x|)\rightarrow 0$ as $|x|\rightarrow \infty $.  Moreover, we notice that $\partial_ru_s\in H^1(\mathbb{R})\cap L^\infty(\mathbb{R})$ and satisfies
$$-\Delta \partial_ru_s+(-\Delta)^s\partial_ru_s+\partial_ru_s=(p+1)u_s^p\partial_ru_s.$$
Following the proof of Lemma~\ref{lemma H^2}, we can infer that $\partial_r u_s\in H^2(\mathbb{R}) $, and exploiting the Sobolev embedding Theorems again, we derive  that $u_s\in C^{2,1/2}(\mathbb{R}).$
\end{proof}

	\smallskip
	
 We next establish some property of ground states (in the sense of Definition~\ref{ground states}):   a ground state $u_s$ has indeed Morse index equal to $1$. Note that \cite{MR3530361} shows the nondegeneracy and uniqueness results for the radial solutions with Morse index equal to $1$  in the context of single Fractional Laplacian, which is more general. The spectral assumption there that solutions have Morse index equal to $1$ seems essential for us to obtain the nondegeneracy results. As motivated by this, we raise the Question~\ref{question} in the mixed local/nonlocal setting.
	
	\begin{Proposition}\label{proposition A.1}
	Let $n\geqslant 1$ and $s\in(0,1)$.	Assume that $u_s\in\mathcal{M}_s$. Then $L_s$ has exactly one strictly negative eigenvalue. 
	\end{Proposition}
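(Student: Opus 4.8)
The plan is to run the classical ground-state argument: produce one negative direction for the quadratic form of $L_s$, show that $L_s$ is nonnegative on a closed subspace of codimension one, and conclude by the min--max principle. First I would record the spectral picture. Since $u_s\in\mathcal{M}_s$ lies in $H^2(\mathbb{R}^n)$ and decays like $|x|^{-(n+2s)}$ by Theorem~\ref{th: existence and properity}, the potential $(p+1)u_s^p$ is bounded and vanishes at infinity, hence a relatively compact perturbation of $-\Delta+(-\Delta)^s+1$ with respect to its form domain $H^1(\mathbb{R}^n)$; therefore $\sigma_{\mathrm{ess}}(L_s)=\sigma_{\mathrm{ess}}(-\Delta+(-\Delta)^s+1)=[1,\infty)$. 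Consequently the spectrum of $L_s$ in $(-\infty,1)$ consists of isolated eigenvalues of finite multiplicity, the number $N_-(L_s)$ of strictly negative eigenvalues counted with multiplicity is well defined, and by the min--max principle $N_-(L_s)=\sup\{\dim V:\ V\subset H^1(\mathbb{R}^n),\ \langle L_sv,v\rangle_{L^2}<0\ \text{for all } v\in V\setminus\{0\}\}$. Testing with $u_s$ itself and using the Nehari identity $\|u_s\|_s^2=\int_{\mathbb{R}^n}u_s^{p+2}\,dx$ (which is precisely $\langle\nabla\mathcal{F}_s(u_s),u_s\rangle=0$), one gets
\[
\langle L_su_s,u_s\rangle_{L^2}=\|u_s\|_s^2-(p+1)\int_{\mathbb{R}^n}u_s^{p+2}\,dx=-p\,\|u_s\|_s^2<0 ,
\]
since $p>0$ and $u_s\not\equiv 0$; hence $N_-(L_s)\geqslant1$.

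For the upper bound the key claim is that $\langle L_sv,v\rangle_{L^2}\geqslant0$ for every $v\in H^1(\mathbb{R}^n)$ with $v\bot_s u_s$. Granting this, the subspace $\{u_s\}^{\bot_s}$ has codimension one, so any subspace on which $\langle L_s\cdot,\cdot\rangle_{L^2}$ is negative definite meets it only at $0$ and therefore has dimension at most one; thus $N_-(L_s)\leqslant1$. To prove the claim I would use that $u_s$ is a global minimizer of $J_s$ and, being a solution of \eqref{main equation}, a critical point of $J_s$. Fix $v\bot_s u_s$ and set $f(t):=J_s(u_s+tv)$. The Euler--Lagrange identity gives $\langle u_s,v\rangle_s=\int_{\mathbb{R}^n}u_s^{p+1}v\,dx$, and since $v\bot_s u_s$ both of these vanish; hence $\|u_s+tv\|_s^2=\|u_s\|_s^2+t^2\|v\|_s^2$ has no linear term, the numerator and denominator of $J_s(u_s+tv)$ are both stationary at $t=0$, and $f'(0)=0$. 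A direct second-order expansion, using once more $\|u_s\|_s^2=\|u_s\|_{L^{p+2}(\mathbb{R}^n)}^{p+2}$ to simplify the constants, yields
\[
f''(0)=\frac{2}{\|u_s\|_{L^{p+2}(\mathbb{R}^n)}^2}\Big(\|v\|_s^2-(p+1)\int_{\mathbb{R}^n}u_s^{p}v^2\,dx\Big)=\frac{2}{\|u_s\|_{L^{p+2}(\mathbb{R}^n)}^2}\,\langle L_sv,v\rangle_{L^2},
\]
and minimality of $u_s$ forces $f''(0)\geqslant0$, which is the claim. Equivalently one may argue with $\mathcal{F}_s$ restricted to the Nehari manifold $\mathcal{B}_s$: since $u_s$ is a critical point of $\mathcal{F}_s$ the Lagrange multiplier vanishes, so the constrained Hessian is exactly the quadratic form of $L_s$, and $T_{u_s}\mathcal{B}_s=\{u_s\}^{\bot_s}$ because $\langle G'(u_s),v\rangle=-p\langle u_s,v\rangle_s$ for $G(u)=\|u\|_s^2-\|u\|_{L^{p+2}(\mathbb{R}^n)}^{p+2}$.

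Combining the two steps gives $N_-(L_s)=1$, which is the assertion. The only genuinely delicate point is the second-variation estimate in the upper bound: one must exploit the Euler--Lagrange identity both to make $J_s$ stationary along $u_s+tv$ and to collapse $f''(0)$ into precisely $\langle L_sv,v\rangle_{L^2}$, and it is the orthogonality $v\bot_s u_s$ that simultaneously keeps this computation clean and singles out the correct codimension-one subspace for the min--max argument. The remaining ingredients — the identification of $\sigma_{\mathrm{ess}}(L_s)$ through relative compactness of $u_s^p$, and the min--max bookkeeping — are routine, and the operator-theoretic framework for $L_s$ is already set up in \eqref{Ls}.
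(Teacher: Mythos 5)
Your proof is correct and follows essentially the same strategy as the paper's: test the quadratic form with $u_s$ itself to get $\langle L_s u_s,u_s\rangle_{L^2}=-p\|u_s\|_s^2<0$, prove $\langle L_s v,v\rangle_{L^2}\geqslant 0$ on $\{u_s\}^{\bot_s}$ by expanding $J_s(u_s+tv)$ to second order (using $v\bot_s u_s$ and the Euler--Lagrange identity to kill the linear terms) and invoking minimality, then finish by the min--max principle. Your extra remarks on the essential spectrum and on the Nehari-manifold reformulation are harmless elaborations, but the core second-variation computation and the codimension-one bookkeeping coincide with the paper's argument.
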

	\begin{proof}
First, we claim that  
	\begin{equation}\label{ndsvnjsd}
		\left\langle L_s\varphi, \varphi\right\rangle_{L^2}\geqslant 0\qquad \forall\varphi\bot_s u_s.
	\end{equation}
Indeed, let $\epsilon >0$, we notice that  $ u_s\bot_s \varphi $, therefore,
		\begin{equation}\label{scalar product}
			\|u_s+\epsilon\varphi\|_{s}^2 =\epsilon^2\|\varphi\|_{s}^2+\|u_s\|_{s}^2.
		\end{equation}
		Moreover, employing a Taylor expansion,  we deduce that 
		\begin{equation}\label{p+2 norm}
			\begin{split}
				&\int_{\mathbb{R}^n}|\epsilon \varphi+u_s|^{p+2}\, dx\\
				=&\int_{\mathbb{R}^n}|u_s|^{p+2}\,dx +\epsilon(p+2)\int_{\mathbb{R}^n}|u_s|^{p+1}\varphi\, dx+\frac{\epsilon^2(p+2)(p+1)}{2}\int_{\mathbb{R}^n}|u_s|^{p}\varphi^2\, dx+O(\epsilon^3).
			\end{split}
		\end{equation}
		Furthermore, by testing \eqref{main equation} against $\varphi$ and  using again that $\varphi\bot_s u_s$,  it follows that 
		\[ \int_{\mathbb{R}^n}|u_s|^{p+1}\varphi\, dx=0. \]
		As a consequence of this,  we rewrite \eqref{p+2 norm} as
		\begin{equation}
			\int_{\mathbb{R}^n}|\epsilon \varphi+u_s|^{p+2}\, dx= \int_{\mathbb{R}^n}|u_s|^{p+2}\,dx+\frac{\epsilon^2(p+2)(p+1)}{2}\int_{\mathbb{R}^n}|u_s|^{p}\varphi^2\, dx+O(\epsilon^3).
		\end{equation}
		Utilizing again the Taylor expansion, we conclude that
		\begin{equation}\label{tarloy}
		\left(\int_{\mathbb{R}^n}|\epsilon \varphi+u_s|^{p+2}\, dx\right)^{\frac{2}{p+2}}= \|u_s\|^{2}_{L^{p+2}(\mathbb{R}^n)}\left(1+\epsilon^2(p+1)\frac{\int_{{\mathbb{R}^n}}|u_s|^{p}\varphi^2\, dx}{\|u_s\|^{p+2}_{L^{p+2}(\mathbb{R}^n)}}+O(\epsilon^3)\right).
		\end{equation}
		Moreover,   the minimality of the $u_s$ for $J_s(u)$ implies that 
		\begin{equation}\label{minimality}
			0\leqslant J_s(u_s+\epsilon\varphi)-J_s(u_s)= \frac{\|u_s+\epsilon\varphi\|_{s}^2}{\|u_s+\epsilon\varphi\|^2_{L^{p+2}(\mathbb{R}^n)}}-\frac{\|u_s\|_{s}^2}{\|u_s\|^2_{L^{p+2}(\mathbb{R}^n)}}.
		\end{equation}
		By inserting \eqref{scalar product} and \eqref{tarloy} into \eqref{minimality}, one has,
		\begin{equation}
			\begin{split}
			0&\leqslant	\left(\epsilon^2\|\varphi\|_{s}^2+\|u_s\|_{s}^2\right)-\|u_s\|_{s}^2\left(1+{\epsilon^2(p+1)}\frac{\int_{{\mathbb{R}^n}}|u_s|^{p}\varphi^2\, dx}{\|u_s\|^{p+2}_{L^{p+2}(\mathbb{R}^n)}}+O(\epsilon^3)\right)\\
			&=\epsilon^2\left(\|\varphi\|_{s}^2-(p+1)\int_{\mathbb{R}^n}|u_s|^{p}\varphi^2\, dx+O(\epsilon)\right).
			\end{split}
		\end{equation}
	As a result, we can infer that 
	\begin{equation}
		\left\langle L_s\varphi, \varphi\right\rangle_{L^2}=\|\varphi\|_{s}^2-(p+1)\int_{\mathbb{R}^n}|u_s|^{p}\varphi^2\, dx\geqslant 0\qquad \forall \varphi\bot_s u_s.
	\end{equation}
This ends the claim.

Moreover, we observe that $\left\langle L_su_s, u_s\right\rangle_{L^2}<0 $ with $u_s$ is radial, 
thus, by the min-max principle, the
operator $L_s$ has exactly one negative eigenvalue and the corresponding eigenfunction is radial.
\end{proof}

	\section{The Kato class $K_s$ and Perron–Frobenius theory}

Let us first introduce a suitable class of potentials $ V $ for the mixed local/nonlocal Schr\"{o}dinger
operators discussed here. In many respects (e.g., perturbation theory and properties of eigenfunctions), the following ``Kato class” (denoted by $K_s$) is a natural choice .

\begin{Definition}\label{definition of Kato}
	Let $0<s<1$ and $n\geqslant 1$. We say that the potential $V \in K_s$ if and only if $V:\mathbb{R}^n\rightarrow \mathbb{R}^n$
	is measurable and satisfies
	\begin{equation}
	\lim\limits_{\mathbb{R}_+\ni\beta \rightarrow +\infty}\sup\limits_{x\in\mathbb{R}^n}\left|\left(-\Delta+(-\Delta)^s+\beta\right)^{-1}|V|\right|(x)=0.
	\end{equation}
\end{Definition}

In this section, we collect some basic results about mixed local/nonlocal Schr\"{o}dinger operators
$H=-\Delta+(-\Delta)^s+V$ acting on  $L^2(\mathbb{R}^n)$ by following the arguments to fractional Schr\"{o}dinger operator in \cite{MR3070568, MR3530361}.

\begin{remark}
 If $V\in K_s$, then $H=-\Delta+(-\Delta)^s+V$ defines a unique self-adjoint operator
	on $ L^2(\mathbb{R}^n)$ and the corresponding heat kernel $e^{-tH}$ maps $ L^2(\mathbb{R}^n)$ 
	into $L^2(\mathbb{R}^n)\cap C(\mathbb{R}^n)$ for any $t>0$. In particular, any $L^2$-eigenfunction of $H$ is continuous and bounded. See also \cite{MR1054115,MR670130} for equivalent definitions of $K_s$ and further background	materials.
\end{remark}

First, we derive the following sufficient condition in terms of $L^p$-spaces for a potential
$V $ to be in $K_s$.

\begin{Lemma}\label{lemma kato lp}
Let  $n\geqslant 1$, $0<s<1$ and $V:\mathbb{R}^n\rightarrow \mathbb{R}^n$ be given. Then the following holds:

If $ V\in L^q(\mathbb{R}^n) $ for some $q>\max\left\{n/2s,1\right\}$, then $V\in K_s$. 
\end{Lemma}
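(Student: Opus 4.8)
The plan is to reduce the claim to an elementary bound on an incomplete-Gamma integral, by passing through the heat semigroup generated by the free operator $H_0:=-\Delta+(-\Delta)^s$. First I would record the soft facts: $H_0$ is the self-adjoint nonnegative Fourier multiplier with symbol $m(\xi)=|\xi|^2+|\xi|^{2s}$; its heat semigroup $e^{-tH_0}$ acts by convolution with the kernel $p_t(x)=(2\pi)^{-n}\int_{\mathbb{R}^n}e^{i\xi\cdot x}e^{-tm(\xi)}\,d\xi$; and $p_t\geqslant0$ with $\int_{\mathbb{R}^n}p_t\,dx=1$ (either from $\widehat{p_t}(0)$, or from the subordination identity $p_t=h_t\ast q_t$, $h_t$ and $q_t$ being the Gaussian and the symmetric $2s$-stable densities). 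Then for $\beta>0$ one has $(H_0+\beta)^{-1}=\int_0^\infty e^{-\beta t}e^{-tH_0}\,dt$, so applying this to $|V|\geqslant0$ and using Tonelli (the integrand is nonnegative) gives the pointwise identity
\[
\big((-\Delta+(-\Delta)^s+\beta)^{-1}|V|\big)(x)=\int_0^\infty e^{-\beta t}\,(p_t\ast|V|)(x)\,dt .
\]

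Next I would estimate $p_t\ast|V|$ in $L^\infty$ uniformly in $x$. By Young's inequality $\|p_t\ast|V|\|_{L^\infty}\leqslant\|p_t\|_{L^{q'}}\|V\|_{L^q}$ with $q'=q/(q-1)\in[1,\infty)$ (this is where $q>1$ enters), and by interpolation together with $\|p_t\|_{L^1}=1$,
\[
\|p_t\|_{L^{q'}}\leqslant\|p_t\|_{L^1}^{1/q'}\|p_t\|_{L^\infty}^{1/q}=\|p_t\|_{L^\infty}^{1/q}.
\]
The only genuinely quantitative input is the on-diagonal heat-kernel bound, and here the comparison $m(\xi)\geqslant|\xi|^{2s}$ suffices:
\[
\|p_t\|_{L^\infty}\leqslant(2\pi)^{-n}\int_{\mathbb{R}^n}e^{-t|\xi|^{2s}}\,d\xi=C_{n,s}\,t^{-n/(2s)}
\]
(the companion bound $C_n t^{-n/2}$ also holds via $m(\xi)\geqslant|\xi|^2$, but it is the $t^{-n/(2s)}$ bound that is matched to the stated hypothesis). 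Hence $\|p_t\ast|V|\|_{L^\infty}\leqslant C_{n,s}\,t^{-n/(2sq)}\|V\|_{L^q}$.

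Combining the two steps,
\[
\sup_{x\in\mathbb{R}^n}\big((-\Delta+(-\Delta)^s+\beta)^{-1}|V|\big)(x)\leqslant C_{n,s}\|V\|_{L^q}\int_0^\infty e^{-\beta t}\,t^{-\frac{n}{2sq}}\,dt=C_{n,s}\,\Gamma\!\Big(1-\tfrac{n}{2sq}\Big)\|V\|_{L^q}\,\beta^{\frac{n}{2sq}-1}.
\]
Since $q>\max\{n/(2s),1\}$, the exponent $\gamma:=n/(2sq)$ lies in $(0,1)$, so $\Gamma(1-\gamma)<\infty$ and $\beta^{\gamma-1}\to0$ as $\beta\to+\infty$; by Definition~\ref{definition of Kato} this gives $V\in K_s$.

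As for the main obstacle: there really is no deep one, and this is why I would present the argument in this order — the fine two-sided heat-kernel estimates for the mixed operator are \emph{not} needed, only the crude Fourier-side comparisons $0\leqslant m(\xi)$ and $m(\xi)\geqslant|\xi|^{2s}$. The two points that deserve a line of care are (i) justifying the resolvent-as-Laplace-transform identity and the Fubini/Tonelli interchange at the level of the merely $L^q$ potential $|V|$, which is clean because everything in sight is nonnegative, and (ii) the normalization $\|p_t\|_{L^1}=1$ together with the on-diagonal bound, both of which are elementary. One could moreover note that using $\|p_t\|_{L^\infty}\leqslant C\min\{t^{-n/2},t^{-n/(2s)}\}$ would weaken the hypothesis to $q>\max\{n/2,1\}$, but the statement as given is exactly what is used later and matches the setting of \cite{MR3070568,MR3530361}.
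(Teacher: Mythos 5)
Your proof is correct and follows essentially the same route as the paper: write the resolvent as the Laplace transform of the heat semigroup, apply H\"older/Young against $\|V\|_{L^q}$, estimate the resulting $L^{q'}$ norm of the kernel, and integrate in $t$. The only (harmless, arguably nicer) deviation is that you obtain $\|p_t\|_{L^\infty}\leqslant C_{n,s}\,t^{-n/(2s)}$ directly from the Fourier-side comparison $|\xi|^2+|\xi|^{2s}\geqslant|\xi|^{2s}$ and then interpolate with $\|p_t\|_{L^1}=1$, rather than invoking the refined two-sided heat-kernel bound \eqref{h_s} quoted from \cite{DSVZ24} and computing $\|\mathcal{H}_s(\cdot,t)\|_{L^r}$ from it; both routes land on the same convergent incomplete-Gamma integral and the same exponent condition $q>\max\{n/(2s),1\}$.
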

\begin{proof}
	In view of Definition~\ref{definition of Kato}, it suffices to show that
	\begin{equation}\label{dsacsd}
		\lim\limits_{\beta\rightarrow +\infty}\sup\limits_{x\in\mathbb{R}^n}\left|\left(-\Delta+(-\Delta)^s+\beta\right)^{-1}|V|\right|(x)=0.
	\end{equation}
Using the fact that 
\begin{equation}\label{dcdscdd}
	\left(-\Delta+(-\Delta)^s+\beta\right)^{-1}=\int_{0}^{\infty} e^{-\beta t}e^{-t\left(-\Delta+(-\Delta)^s\right)}\, dt \qquad \text{for } \beta>0
\end{equation}
  and H\"{o}lder inequality, we derive that
\begin{equation}\label{hthtr}
	\|\left(-\Delta+(-\Delta)^s+\beta\right)^{-1}|V|\|_{L^\infty}\leqslant C \|V\|_{L^q(\mathbb{R}^n)}\int_{0}^{\infty} e^{-t\beta}\|\mathcal{H}_s(\cdot,t)\|_{L^r(\mathbb{R}^n)}\, dt
\end{equation}
with $1/p+1/r=1$, where $\mathcal{H}_s$ denotes a heat kernel associated with the mixed $ -\Delta+(-\Delta)^s $ operator given by
\begin{equation}\label{heat kernel}
	\mathcal{H}_s(x,t)=\int_{{\mathbb{R}^n}} e^{-t|\xi|^2-t|\xi|^{2s}}e^{2\pi ix\cdot \xi} \, d\xi\quad \text{for } t>0, x\in\mathbb{R}^n.
\end{equation}
 We point out that the heat kernel $\mathcal{H}_s(x,t)$ may be viewed as a transition density of the L\'{e}vy
 process $X$ which is the mixture of the
 Brownian motion and an independent symmetric $2s$-stable L\'{e}vy process, and  the pseudo-differential operator $ -\Delta+(-\Delta)^s$ is the infinitesimal generator of $X$.

 From \cite{DSVZ24}, we know that,  there exists a positive constant $C_1>0$ only depending on $n,s$ such that  
 \begin{equation}\label{h_s}
 0<	\mathcal{H}_s(x,t)\leqslant C_1 \left\{\frac{t}{|x|^{n+2s}}\vee \frac{t^s}{|x|^{n+2s}}\right\}\wedge \left\{ t^{-\frac{n}{2s}}\wedge  t^{-\frac{n}{2}}\right\}
 \end{equation}
where~$a\wedge b:=\min\left\{a,b\right\}$ and~$a\vee b:=\max\left\{a,b\right\}$.
 
 Hence, we find that 
 \begin{equation}\label{vdf}
 	\|\mathcal{H}_s(\cdot,t)\|_{L^r(\mathbb{R}^n)}\leqslant C_{n,s,r}\left\{ t^{-\frac{nr-n}{2sr}}\wedge  t^{-\frac{nr-n}{2r}}\right\}.
 \end{equation}
 Since $1/p=(r-1)/r$, the previous bound for $\|\mathcal{H}_s(\cdot,t)\|_{L^r(\mathbb{R}^n)}$ implies that 
 \begin{equation}\label{sdvd}
 	\|\left(-\Delta+(-\Delta)^s+\beta\right)^{-1}|V|\|_{L^\infty}\leqslant C_{n,s,q} \|V\|_{L^q(\mathbb{R}^n)}\int_{0}^{\infty} e^{-t\beta}\left\{t^{-\frac{n}{2sq}}\wedge  t^{-\frac{n}{2rq}}\right\}\, dt.
 \end{equation}
From this and~\eqref{dsacsd}, we deduce that if $p>\left\{\frac{n}{2s}\vee 1\right\}$,  then $V\in K_s$.
\end{proof}

  From Lemma~\ref{lemma kato lp} and Theorem~\ref{th: existence and properity}, it is immediate to check that	$u_s^p\in K_s$ with $u_s\in \mathcal{M}_s$, i.e., the potential $V =u_s^p $ belongs to the ``Kato-class" with respect to
 	$ -\Delta+(-\Delta)^s $. We thus can show that mixed  Schr\"{o}dinger operators $H=-\Delta+(-\Delta)^s+V$ enjoy
the following Perron–Frobenius property.

\begin{Lemma}\label{perron-frobenius }
	Let $n\geqslant 1$, $0<s<1$ and consider $H=-\Delta+(-\Delta)^s+V$ acting on $L^2(\mathbb{R}^n)$, where we
	assume that $V\in K_s$. Suppose that $e=\inf\sigma(H)$ is an eigenvalue. Then $e$ is simple and
	its corresponding eigenfunction $\varphi=\varphi(x)$ is positive (after replacing $\varphi$ by $-\varphi$ if necessary).
\end{Lemma}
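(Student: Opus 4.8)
The plan is to carry out the classical Perron--Frobenius argument for Schr\"odinger semigroups, in the same spirit as for the purely fractional operators in \cite{MR3070568,MR3530361}, the only genuinely new ingredient being the strict positivity of the heat kernel of the mixed operator $H_0:=-\Delta+(-\Delta)^s$ recorded in~\eqref{h_s}. Everything reduces to the single claim that, for every $t>0$, the semigroup $e^{-tH}$ is \emph{positivity improving} on $L^2(\mathbb{R}^n)$, that is, $f\geqslant0$ with $f\not\equiv0$ forces $(e^{-tH}f)(x)>0$ for a.e.\ $x\in\mathbb{R}^n$; granting this, the statement of the lemma is a standard abstract fact which I recall at the end.

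\emph{Step 1 (the free semigroup).} The kernel $\mathcal H_s(\cdot,t)$ of $e^{-tH_0}$ is the transition density of the L\'evy process $X=B+Y$ obtained by superposing a Brownian motion $B$ and an independent symmetric $2s$-stable process $Y$; equivalently it is the convolution of the Gaussian kernel with the (everywhere strictly positive) density of $Y$. Either description gives $\mathcal H_s(x,t)>0$ for all $x\in\mathbb{R}^n$ and $t>0$, which is the lower content of~\eqref{h_s}. Hence $e^{-tH_0}$ is positivity improving.

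\emph{Step 2 (adding the potential).} Write $V=V_+-V_-$ with $V_\pm\in K_s$. Since, by the Kato condition, $V$ is infinitesimally form bounded with respect to $H_0$, the Trotter product formula exhibits $e^{-tH}$ as the strong limit in $L^2(\mathbb{R}^n)$ of $\big(e^{-\frac{t}{2m}V}e^{-\frac{t}{m}H_0}e^{-\frac{t}{2m}V}\big)^{m}$ as $m\to\infty$; as each factor $e^{-\frac{t}{2m}V}$ is multiplication by a strictly positive function, this already shows that $e^{-tH}$ is positivity preserving. To upgrade to positivity improving I would use the Feynman--Kac representation
\[
(e^{-tH}f)(x)=\mathbb{E}_x\!\left[e^{-\int_0^t V(X_r)\,dr}\,f(X_t)\right]=\int_{\mathbb{R}^n}p_t(x,y)\,f(y)\,dy,
\]
whose validity, together with $\int_0^t|V(X_r)|\,dr<\infty$ almost surely and the mapping property $e^{-tH}\colon L^2\to L^2\cap C(\mathbb{R}^n)$ noted before this lemma, is exactly what the membership $V\in K_s$ (Definition~\ref{definition of Kato}) provides (see also \cite{MR1054115,MR670130}); disintegrating the expectation along the bridge of $X$ from $x$ to $y$ and invoking $\mathcal H_s>0$ together with $e^{-\int_0^t V(X_r)\,dr}>0$ almost surely yields $p_t(x,y)>0$ for every $x,y\in\mathbb{R}^n$, so $e^{-tH}$ is positivity improving. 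This is the step I expect to be the main obstacle, since it is the only place where the precise definition of $K_s$ must be exploited and where some care is needed to obtain \emph{strict}, rather than merely non-strict, positivity of the perturbed kernel.

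\emph{Step 3 (conclusion).} Fix $t_0>0$. Because $e=\inf\sigma(H)$ is an eigenvalue, the spectral theorem gives $\|e^{-t_0H}\|=\sup\sigma(e^{-t_0H})=e^{-t_0e}$, an eigenvalue of the bounded self-adjoint operator $e^{-t_0H}$ with $\ker(H-e)=\ker(e^{-t_0H}-e^{-t_0e})$. By the Perron--Frobenius theorem for positivity improving operators (as in \cite{MR3070568,MR3530361}), $e^{-t_0e}$ is a simple eigenvalue of $e^{-t_0H}$ whose eigenfunction $\varphi$ can be chosen with $\varphi\geqslant0$; applying $e^{-t_0H}$ once more, $\varphi=e^{t_0e}e^{-t_0H}\varphi>0$ a.e., and $\varphi$ is continuous by the remark above, so $\varphi>0$ everywhere. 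Hence $e$ is a simple eigenvalue of $H$ with a strictly positive eigenfunction (after replacing $\varphi$ by $-\varphi$ if necessary), which is the assertion of the lemma.
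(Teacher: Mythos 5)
Your proposal is correct and takes essentially the same approach as the paper: both reduce the lemma to the positivity-improving property of $e^{-tH}$, obtained from the strict positivity of the free heat kernel $\mathcal H_s$ together with the fact that $V\in K_s$ is an infinitesimally bounded perturbation of $-\Delta+(-\Delta)^s$, and then conclude via the abstract Perron--Frobenius theorem. You merely unpack the perturbation step (Trotter product plus Feynman--Kac) that the paper delegates to \cite{MR493421}, so this is a difference in level of detail rather than in method.
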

\begin{proof}
	We first claim that  the operator $e^{-t\left(-\Delta+(-\Delta)^s\right)}$ acting on $L^2(\mathbb{R}^n) $ is positivity improving
	for $t>0$. By this, we mean that if $f\geqslant 0 $ and $f\nequiv 0 $, then $[e^{-t\left(-\Delta+(-\Delta)^s\right)} f]>0$.

	Indeed, from \cite{MR1054115,MR549115}, we know that 
	\begin{equation}\label{vdsvdsdfd}
		[e^{-t\left(-\Delta+(-\Delta)^s\right)} f]=\textbf{E}_{x}\left\{f(X_t)\right\}=\int_{{\mathbb{R}^n}}f(x+y)\mathcal{H}_s(y,t)\, dy,
	\end{equation}
where $\mathcal{H}_s(x,t)$ is a heat kernel given by~\eqref{heat kernel} (transition density of a L\'{e}vy
process).	Since $\mathcal{H}_s(x,t)>0$ for $t>0$ and  $x\in \mathbb{R}^n$  (see e.g. \cite{DSVZ24}), we deduce  that $[e^{-t\left(-\Delta+(-\Delta)^s\right)} f]>0$.

	Next, we consider $H=-\Delta+(-\Delta)^s+V$ acting on $L^2(\mathbb{R}^n)$. Since $V\in K_s$, it follows that
$	V$ is an infinitesimally bounded perturbation of $-\Delta+(-\Delta)^s $ (see e.g. \cite[Section 3]{MR1054115}). Hence we can apply standard
	Perron–Frobenius-type arguments (see, e.g., \cite{MR493421}) to deduce that the largest eigenvalue
	of $e^{-tH}$ is simple and its corresponding eigenfunction strictly positive. By functional
	calculus, this fact is equivalent to saying that the lowest eigenvalue of $H$ is simple and
	has a positive eigenfunction.
\end{proof}

\begin{Lemma}\label{lemma n=1}
Let $n=1$ and $H=-\Delta+(-\Delta)^s+V$ be as in Lemma~\ref{perron-frobenius }. Assume that
$V=V(|x|)$ is even and let $H_{odd}$ denote the restriction of $H $ to $L^2_{odd}(\mathbb{R})$. If $e=\inf\sigma(H_{odd})$
is an eigenvalue, then $e$ is simple and the corresponding odd eigenfunction $\varphi=\varphi(x)$
satisfies $\varphi(x)>0$ for $x>0$ (after replacing $\varphi $ by $-\varphi$ if necessary).
\end{Lemma}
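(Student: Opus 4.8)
The plan is to reduce the statement to the Perron--Frobenius machinery of Lemma~\ref{perron-frobenius } after ``folding'' the odd sector onto the half-line. Since $V=V(|x|)$ is even, both multiplication by $V$ and $-\Delta+(-\Delta)^s$ commute with the reflection $R\varphi(x):=\varphi(-x)$, so $H$ leaves $L^2_{\mathrm{odd}}(\mathbb{R})$ invariant and $H_{odd}$ is a well-defined self-adjoint operator. The unitary map $\mathcal U\colon L^2_{\mathrm{odd}}(\mathbb{R})\to L^2((0,\infty))$, $(\mathcal U\varphi)(x)=\sqrt2\,\varphi(x)$ for $x>0$, conjugates $H_{odd}$ to a self-adjoint operator $\widetilde H=\widetilde H_0+\widetilde V$ on $L^2((0,\infty))$, where $\widetilde H_0:=\mathcal U(-\Delta+(-\Delta)^s)\mathcal U^{-1}$ and $\widetilde V$ is multiplication by $V|_{(0,\infty)}$ (form-bounded with relative bound $0$, inherited from $V\in K_s$). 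It therefore suffices to show that $e^{-t\widetilde H}$ is positivity improving on $L^2((0,\infty))$ for every $t>0$: then, since $e=\inf\sigma(\widetilde H)$ being an eigenvalue forces $e^{-te}=\sup\sigma(e^{-t\widetilde H})=\|e^{-t\widetilde H}\|$ to be an eigenvalue of the positive operator $e^{-t\widetilde H}$, the abstract Perron--Frobenius theorem (\cite{MR493421}, exactly as in Lemma~\ref{perron-frobenius }) shows it is simple with a strictly positive eigenvector, and undoing $\mathcal U$ yields the assertion.

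The heart of the argument is the analysis of $e^{-t\widetilde H_0}$. Using the heat kernel $\mathcal H_s$ of \eqref{heat kernel} (which is even in $x$) and splitting the convolution integral over $\{y>0\}\cup\{y<0\}$ gives, for odd $\varphi$,
\[
\bigl[e^{-t(-\Delta+(-\Delta)^s)}\varphi\bigr](x)=\int_0^\infty\bigl(\mathcal H_s(x-y,t)-\mathcal H_s(x+y,t)\bigr)\varphi(y)\,dy ,
\]
so that $e^{-t\widetilde H_0}$ has integral kernel $K_t(x,y):=\mathcal H_s(x-y,t)-\mathcal H_s(x+y,t)$ on $(0,\infty)^2$. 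The key point is that $K_t(x,y)>0$ for all $x,y>0$, i.e.\ that $\mathcal H_s(\cdot,t)$ is strictly decreasing in $|x|$. I would establish this by subordination: writing $e^{-t|\xi|^{2s}}=\int_0^\infty e^{-u|\xi|^2}\eta^{(s)}_t(u)\,du$ with $\eta^{(s)}_t\ge0$ the probability density of the $s$-stable subordinator at time $t$, and inserting this factorisation into \eqref{heat kernel}, one obtains
\[
\mathcal H_s(x,t)=\int_0^\infty G_{t+u}(x)\,\eta^{(s)}_t(u)\,du ,
\]
where $G_\tau$ is, for each $\tau>0$, a positive multiple of a Gaussian and hence strictly decreasing in $|x|$. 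Since $x,y>0$ forces $|x-y|<x+y$, each Gaussian obeys $G_{t+u}(x-y)>G_{t+u}(x+y)$, and integrating against the non-negative weight $\eta^{(s)}_t$ yields $K_t(x,y)>0$; thus $e^{-t\widetilde H_0}$ is positivity improving. I expect this step — the strict radial monotonicity of the mixed heat kernel, for which the two-sided bound \eqref{h_s} alone is insufficient — to be the main obstacle, and the subordination identity is precisely what reduces it to the elementary Gaussian comparison.

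Finally I would transfer the positivity-improving property from $e^{-t\widetilde H_0}$ to $e^{-t\widetilde H}=e^{-t(\widetilde H_0+\widetilde V)}$. Since $V\in K_s$, it is an infinitesimally form-bounded perturbation (as recalled after Definition~\ref{definition of Kato} and used in the proof of Lemma~\ref{perron-frobenius }), and $\widetilde V$ is multiplication by a real function. By the Trotter product formula — equivalently, the Feynman--Kac representation \eqref{vdsvdsdfd} for the reflected process, cf.\ \cite{MR1054115,MR549115} — $e^{-t\widetilde H}$ is the strong limit of $\bigl(e^{-(t/m)\widetilde H_0}e^{-(t/m)\widetilde V}\bigr)^m$; each $e^{-(t/m)\widetilde V}$ is multiplication by a strictly positive function and $e^{-(t/m)\widetilde H_0}$ is positivity improving, so the standard argument invoked for Lemma~\ref{perron-frobenius } shows that $e^{-t\widetilde H}$ is positivity improving for every $t>0$. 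Applying Perron--Frobenius as in the first paragraph then completes the proof.
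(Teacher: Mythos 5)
Your proof is correct and follows essentially the same route as the paper's: reduce to showing the mixed heat kernel $\mathcal H_s(\cdot,t)$ is strictly decreasing in $|x|$ (so that the odd-sector kernel $\mathcal H_s(x-y,t)-\mathcal H_s(x+y,t)$ is positive for $x,y>0$), then invoke Perron--Frobenius for positivity-improving semigroups. The only difference is cosmetic: the paper first rescales and then applies Bernstein's theorem to the Bernstein function $g(z)=z^s+az$ to write $\mathcal H_s$ as a mixture of Gaussians, whereas you obtain the same Gaussian-mixture representation directly by subordinating the fractional factor $e^{-t|\xi|^{2s}}$ alone via the $s$-stable subordinator density and absorbing the local factor into a shift $t\mapsto t+u$, which is a slightly cleaner way to reach the identical strict monotonicity conclusion.
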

\begin{proof}
 We first claim that $\mathcal{H}_s(x,t)$ given by~\eqref{heat kernel} is  strictly decreasing for $x>0$. 
 
 Indeed, from \cite{DSVZ24}, it follows that  $\mathcal{H}_s(x,t)$ is positive,  even in $x$ and decreasing with respect to $|x|$.
 We now exploit the Bernstein’s
 theorem about the Laplace transform to show $\mathcal{H}_s(x,t)$ is strictly decreasing for $x>0$. 
 
 We observe that, by the scaling property of $\mathcal{H}_s(x,t)$, we find that 
\begin{equation}\label{vsdv}
	\begin{split}
		\mathcal{H}_s(x,t)&=t^{-\frac{1}{2s}}\int_{\mathbb{R}} e^{2\pi i t^{-\frac{1}{2s}}x\cdot \xi} e^{-(|\xi|^{2s}+t^{1-\frac{1}{s}}|\xi|^{2})}\,d\xi\\
		&=: t^{-\frac{1}{2s}}\mathcal{H}_s(t^{-\frac{1}{2s}}x,1,t^{1-\frac{1}{s}})  \quad \text{for } t>0, x\in\mathbb{R}.
	\end{split}
\end{equation} 
Hence, it suffices to show that for every positive constant $a$, $ \mathcal{H}_s(x,1,a)$ is strictly decreasing for $x>0$.  

For this, we consider the non-negative function $g(z)=z^s+az$ on the half-line $[0,\infty)$ . Since
 $0<s<1$, it is easy to check that $g(z)$ is completely monotone (i.e. $(-1)^m\frac{d^m}{dz^m} g(z)\leqslant 0 $ for all $m\in\mathbb{N}$).  This implies that the map  $z\rightarrow e^{-g(z)}$ is completely monotone
 as well. Hence, by Bernstein’s theorem, we infer that
 \[ e^{-g(z)}=\int_{0}^{+\infty} e^{-\tau z}d\mu_g(\tau),\]
  for some non-negative finite Borel measure $ \mu_g $ depending on $g$.
  Setting $z=|\xi|^2$, and recalling the inverse Fourier transform of the Gaussian $e^{-\tau|\xi|^2}$,  we  obtain that
  \begin{equation}\label{ksdvsd}
  	\mathcal{H}_s(x,1,a)= \int_{0}^{+\infty} \frac{1}{(\pi \tau)^{1/2}} e^{-\frac{x^2}{4\tau}}d\mu_s(\tau)
  \end{equation}
  with some non-negative measure $\mu_s\geqslant 0$ and $\mu_s\nequiv 0$. From~\eqref{ksdvsd}, one has that  $\mathcal{H}_s(x,1,a)>0$ for  every $a>0$ and $x\in \mathbb{R}$ and
  \begin{equation}\label{sdvsvsd}
  	\frac{d}{dx}\mathcal{H}_s(x,1,a)<0 \qquad \text{for every } x>0 \text{ and } a>0.
  \end{equation}
 	This implies the claim. 
  
 Next,  let $\left(-\Delta+(-\Delta)^s\right)_{odd}$ denote the restriction of $-\Delta+(-\Delta)^s$ to $L^2_{odd}(\mathbb{R})$. By odd symmetry, we find
  that $e^{-t\left(-\Delta+(-\Delta)^s\right)_{odd}}$ acts on $f\in L^2_{odd}(\mathbb{R})$ according to
  \begin{equation}\label{vsvvsd}
  	[e^{-t\left(-\Delta+(-\Delta)^s\right)_{odd}} f](x)=\int_{0}^{+\infty}\left(\mathcal{H}_s(x-y,t)-\mathcal{H}_s(x+y,t)\right)f(y)\, dy.
  \end{equation}

  Moreover, in view of the claim that $\mathcal{H}_s(x,t)$ is  strictly decreasing for $x>0$, it is easy to check that  
 \begin{equation}\label{vfsbfs}
 	\mathcal{H}_s(x-y,t)-\mathcal{H}_s(x+y,t)>0 \qquad \text{for } 0<x,y<+\infty.
 \end{equation}

Therefore, by combining~\eqref{vsvvsd} with~\eqref{vfsbfs}, we deduce that $$[e^{-t\left(-\Delta+(-\Delta)^s\right)_{odd}} f](x)>0.$$  This implies that $e^{-t\left(-\Delta+(-\Delta)^s\right)_{odd}}$ can be identified
with a positivity improving operator on $ L^2_{odd}(\mathbb{R})$.

Now, we consider $H_{odd}=\left(-\Delta+(-\Delta)^s\right)_{odd}+V$ with $V =V (|x|)$ even. Using standard Perron–
Frobenius arguments (see the proof of Lemma~\ref{perron-frobenius } and the reference there), we deduce
that the largest eigenvalue of $e^{-tH_{odd}}$ on $L^2_{odd}(\mathbb{R})$ is simple and its corresponding eigenfunction
satisfies $\varphi=\varphi(x)>0$ for $x>0$. By functional calculus, this fact now implies Lemma~\ref{lemma n=1}.
\end{proof} 

\section{Asymptotics of Eigenfunction}\label{sec:Asymptotics of Eigenfunctions}
The following result provides some uniform estimates regarding the spatial decay of eigenfunction of $(-\Delta)+(-\Delta)^s+V$ below the essential spectrum. Indeed, the following estimates can be found in \cite{MR3530361,DSVZ24a} without, however any direct insight into uniformity of these estimates with respect to $s$ close to $0$.

\begin{Lemma}\label{lemma uniform decay of spectrum}
	Let $n\geqslant 1$ and $s\in(0,1)$,  and suppose that $V\in L^\infty(\mathbb{R}^n)$ with $V(x)\rightarrow 0$ as $|x|\rightarrow\infty $. Assume that $u\in H^1(\mathbb{R}^n)$ with $\|u\|_{H^1(\mathbb{R}^n)}=1$ satisfies 
	\begin{equation}\label{ebet}
		-\Delta u+(-\Delta)^su+Vu=Wu
	\end{equation} with some $W<0$. Furthermore, let $0<\lambda<-W$  be given and suppose that $R>0$ is such that $V(x)+\lambda\geqslant 0$ for every $|x|\geqslant R$, then the following properties hold:
	\begin{itemize}
		\item[(i)] There exists a positive constant $C_1>0$ independing of $s$, such that for every ${s\in(0,1)}$
		\begin{equation}\label{svds}
			\|u\|_{L^\infty(\mathbb{R}^n)}\leqslant C_1.
		\end{equation}
	\item[(ii)] For all $|x|\geqslant 1$, it holds that 
	\begin{equation}\label{sdvbsd}
		|u(x)|\leqslant C\left(s,W,\lambda,n,R,\|V\|_{L^\infty(\mathbb{R}^n)},\|u\|_{L^\infty(\mathbb{R}^n)}\right)|x|^{-(n+2s)}.
	\end{equation}
\item[(iii)] There exists a positive constant $s_1$ such that, for all $|x|\geqslant 1$ and $s\in(0,s_1]$ it holds that 
\begin{equation}\label{sdvbsdnk}
	|u(x)|\leqslant C\left( \lambda,n,R,W,\|V\|_{L^\infty(\mathbb{R}^n)},\|u\|_{L^\infty(\mathbb{R}^n)}\right)|x|^{-n}.
\end{equation}
%\item[(iv)] There exists a positive  constant $C_2$ independent of $s$ such that for every $s\in(0,1)$
%\begin{equation}\label{sdvbsdnkd}
%	\|u\|_{C^1(\mathbb{R}^n)}\leqslant C({n,V,W}, \|u\|_{L^\infty(\mathbb{R}^n)}).
%\end{equation}
	\end{itemize}
\end{Lemma}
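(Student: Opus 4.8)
The plan is to establish the three assertions in sequence, with (i) supplying the uniform $L^\infty$ control on which the decay estimates rest. For part (i) I would run a Moser iteration on the subsolution $|u|$: by Kato's inequality for $-\Delta$ and for $(-\Delta)^s$ one has, weakly,
\[
(-\Delta)|u|+(-\Delta)^s|u|\;\leqslant\;\mathrm{sgn}(u)\big((-\Delta)u+(-\Delta)^su\big)\;=\;(W-V)|u|\;\leqslant\;\big(|W|+\|V\|_{L^\infty(\mathbb{R}^n)}\big)|u|.
\]
Testing this against $\varphi=\min(|u|,k)^{\alpha}|u|$ with $\alpha\geqslant0$, $k>0$, and discarding the nonlocal term --- which is nonnegative because $t\mapsto\min(t,k)^{\alpha}t$ is nondecreasing, exactly as in the proof of Lemma~\ref{lemma uniform bound} --- reduces matters to the purely local estimate $\int_{\mathbb{R}^n}|Dw|^2\leqslant C(1+\alpha)\big(|W|+\|V\|_{L^\infty}\big)\int_{\mathbb{R}^n}w^2$ with $w:=\min(|u|,k)^{\alpha/2}|u|$. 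A Gagliardo--Nirenberg / Sobolev iteration started from $\alpha=0$ (where $\|u\|_{L^2}\leqslant\|u\|_{H^1}=1$) then gives $\|u\|_{L^\infty}\leqslant C(n,\|V\|_{L^\infty},W)$; the constant does not see $s$ because only the constants of the \emph{local} Sobolev inequalities enter. This is \eqref{svds}.

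For parts (ii) and (iii) I would use a common scheme based on the resolvent representation $u=(-\Delta+(-\Delta)^s+|W|)^{-1}(-Vu)=G*(-Vu)$, where the kernel $G=G_{-W}=\int_0^\infty e^{-|W|t}\mathcal{H}_s(\cdot,t)\,dt$ is nonnegative, satisfies $\|G\|_{L^1(\mathbb{R}^n)}=|W|^{-1}$, and, by the heat-kernel bound \eqref{h_s}, obeys $G(z)\leqslant C(n,W,s)\,|z|^{-(n+2s)}$ for $|z|\geqslant1$. Once one knows \emph{any} preliminary decay of $u$ --- for \eqref{sdvbsd} this comes from a comparison function of exponent slightly larger than $n$, which is a supersolution of $-\Delta+(-\Delta)^s+\mu$, $\mu:=-W-\lambda>0$, off a large ball, by the comparison principle for that operator --- one gets $u\in L^1(\mathbb{R}^n)$; iterating the pointwise inequality coming from $G*(-Vu)$, split into $\{|y|\leqslant|x|/2\}$ (controlled by $\sup_{|z|\geqslant|x|/2}G(z)\cdot\|Vu\|_{L^1}$) and $\{|y|>|x|/2\}$ (controlled by $\|G\|_{L^1}$ times the smallness of $V$ at infinity, absorbed using $m(r):=\sup_{|x|\geqslant r}|u(x)|\leqslant\|u\|_{L^\infty}$), then upgrades the decay to the full kernel rate $|u(x)|\leqslant C|x|^{-(n+2s)}$, which is \eqref{sdvbsd}.

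Part (iii) is the $s$-uniform version of the same bootstrap, and it requires two new ingredients: \emph{(a)} $s$-uniform kernel bounds, namely $\|G\|_{L^1}=|W|^{-1}$ (independent of $s$) and $G(z)\leqslant C(n,W)\,|z|^{-n}$ for $|z|\geqslant1$ valid for all $s\in(0,s_1]$ --- the latter deduced from \eqref{h_s} together with the boundedness of $c_{n,s}$ as $s\to0^{+}$ guaranteed by \eqref{c_n,s}; and \emph{(b)} a uniform bound $\|u\|_{L^1(\mathbb{R}^n)}\leqslant M$, with $M$ depending only on $R$, $\|V\|_{L^\infty}$, $W$ and $\lambda$. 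I would obtain (b) by integrating the differential inequality $(-\Delta+(-\Delta)^s)|u|+(V-W)|u|\leqslant0$ against cut-offs $\chi_k\uparrow1$: since $\langle|u|,(-\Delta)\chi_k\rangle\to0$ and $\langle|u|,(-\Delta)^s\chi_k\rangle\to0$ (using that $|u|\in L^1$ by part (ii)), one arrives at $\int_{\mathbb{R}^n}(V-W)|u|\leqslant0$, whence the hypothesis $V+\lambda\geqslant0$ on $\{|x|>R\}$ forces $\mu\int_{|x|>R}|u|\leqslant\big(|W|+\|V\|_{L^\infty}\big)\|u\|_{L^1(B_R)}$ and hence the $s$-independent $M$. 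Feeding (a), (b) and $\|u\|_{L^\infty}\leqslant C_1$ into the bootstrap above then yields $|u(x)|\leqslant C\,|x|^{-n}$ for $|x|\geqslant1$, with $C$ depending only on the quantities listed in the statement, uniformly for $s\in(0,s_1]$; this is \eqref{sdvbsdnk}.

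The delicate point --- where I expect the main difficulty to lie --- is the uniformity in part (iii): one must control $G_{-W}$ (hence manage the degeneration $(-\Delta)^s\to\mathrm{Id}$ as $s\to0^{+}$) and, above all, produce the $s$-independent $L^1$ bound on $u$. This is precisely where the sign condition $V+\lambda\geqslant0$ outside $B_R$ and the asymptotics \eqref{c_n,s} of $c_{n,s}$ are indispensable.
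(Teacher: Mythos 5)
Your part (i) runs the same Moser iteration as the paper; the only cosmetic difference is that you apply a Kato-type inequality to get a subsolution inequality for $|u|$ first, whereas the paper tests the equation directly against $Q_k^\alpha u_\pm$, discarding the nonnegative nonlocal term in both cases. That part is fine.

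For parts (ii) and (iii) you take a genuinely different route. The paper's proof is a pure comparison argument: it uses the Kato inequality to derive $(-\Delta)|u|+(-\Delta)^s|u|+\lambda|u|\leqslant 0$ on $\mathbb{R}^n\setminus B_R$, introduces the fundamental solution $\mathcal{K}_\lambda$ with $\hat{\mathcal{K}}_\lambda(\xi)=(|\xi|^2+|\xi|^{2s}+\lambda)^{-1}$, shows $\omega:=C_0\mathcal{K}_\lambda-|u|\geqslant 0$ by the maximum principle, and then reads the decay of $|u|$ off the decay of $\mathcal{K}_\lambda$. Your resolvent--bootstrap $u=G\ast(-Vu)$ with the near/far split is a legitimate alternative; it has the advantage of not requiring any lower bound on the kernel (whereas the paper's comparison needs $\mathcal{K}_\lambda\gtrsim 1$ on $\{|x|=R\}$, uniformly in $s$, which costs them a whole second step in (iii)). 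Your $s$-uniform $L^1$ bound via integrating the Kato inequality against cut-offs and using $V+\lambda\geqslant 0$ outside $B_R$ is also a nice observation that the paper does not need and does not prove.

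However, for part (iii) there is a real gap where you yourself flag the delicate point. You need the $s$-uniform kernel bound $G(z)\leqslant C(n,W)\,|z|^{-n}$ for $|z|\geqslant 1$ and all $s\in(0,s_1]$, and you claim to get it from the heat-kernel bound \eqref{h_s} together with the behavior of $c_{n,s}$ from \eqref{c_n,s}. This does not work as stated: the constant $C_1$ in \eqref{h_s} is explicitly allowed to depend on $s$, and \eqref{c_n,s} says $c_{n,s}\sim s(1-s)\cdot\mathrm{const}\to 0$ as $s\to 0^+$, which gives no control on $C_1$; nor does it translate directly into a uniform pointwise bound on the kernel. The paper does not obtain uniformity from \eqref{h_s} at all --- it instead performs a direct computation (Step~1 of its part (iii)): a contour-shifted Bessel-function representation of $\mathcal{H}_s(x,t)$ yielding the clean estimate $|x|^{n}\mathcal{H}_s(x,t)\leqslant C_n\,t$ for $|x|>1$ with $C_n$ manifestly $s$-independent, and then integrates in $t$ against $e^{-\lambda t}$. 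Without that explicit analysis (or an equivalent uniform bound), your bootstrap in (iii) does not close. Supplying the uniform kernel estimate is the crux of (iii), and your proposal leaves it unproved.
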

\begin{proof}
	We start by proving part (i). Following closely the proof of Lemma~\ref{lemma uniform bound},  for some $k>0$, let us denote
	\begin{equation}
		Q_{k}=\begin{cases}
			u_+ \quad &\text{ if } u<k\\
			k &\text{ if } u\geqslant k.
		\end{cases}
	\end{equation}
	We then observe that $D Q_{k}=0$ in $\left\{u\geqslant k\right\}\cup \left\{u\leqslant 0\right\}$.
	Set the text function 
	\begin{equation}\label{sgsd}
		\varphi:= Q_{k}^\alpha u=Q_{k}^\alpha u_+\in H^1(\mathbb{R}^n)
	\end{equation}
	for suitable constant  $\alpha\geqslant 0$. 
	Direct calculation yields that for every $ x, y\in\mathbb{R}^n$
	\begin{equation}\label{sdad}
		\varphi(x)\leqslant \varphi(y) \quad \text{if } u(x)\leqslant u(y), 
	\end{equation}
	\begin{equation}\label{varphid}
		\text{ and }\quad 	D\varphi(x)= Q_{k}^\alpha(x) D (u_+)(x)+\alpha Q_{k}^{\alpha}(x) D Q_{k}(x).
	\end{equation}
	
	We multiply the equation of~\eqref{ebet} by $\varphi$ and integrate over $\mathbb{R}^n$, one derives that
	\begin{equation}
			\int_{\mathbb{R}^n} Q_{k}^\alpha(x)|D {u_+}(x)|^2 \, dx+\int_{\mathbb{R}^n}  \alpha Q_{k}^{\alpha}(x) |D Q_{k}(x)|^2\, dx
			\leqslant(\|V\|_{L^\infty(\mathbb{R}^n)}+|W|)\int_{\mathbb{R}^n}  Q_{k}^\alpha(x) u_+^2(x)\, dx.
	\end{equation}
	Moreover, we denote $w:=Q_{k}^{\frac{\alpha}{2}}u_+$. Thus, one has that 
	\begin{equation}
			\int_{\mathbb{R}^n} |Dw|^2\, dx\leqslant \left(1+\alpha\right)(\|V\|_{L^\infty(\mathbb{R}^n)}+|W|)\int_{\mathbb{R}^n}  Q_{k}^\alpha(x) u_+^2(x)\, dx\leqslant C_{W,V}\left(1+\alpha\right)\int_{\mathbb{R}^n}  w^2(x)\, dx.
	\end{equation}

Using the Gagliardo-Nirenberg interpolation inequality, it follows that 
\begin{equation}\label{chjv2}
	\left(\int_{\mathbb{R}^n} |w|^{q_0}\, dx\right)^{\frac{2}{q_0}}\leqslant C(1+\alpha)^{\frac{n}{2}-\frac{n}{q_0}}\int_{\mathbb{R}^n}|w|^2\, dx
\end{equation} 
	where $q_0$ is either equal to the critical exponent $2^*$ if $n > 2$ or any real number in $(2,+\infty)$
if $n =1,2$, for renaming $C$ 	which only depends on  $n,q_0,W,V$.
	
	 Furthermore, denote $\beta:=\frac{n}{2}-\frac{n}{q_0}>0 $ and $\gamma:=\frac{q_0}{2}>1$, thus the inequality~\eqref{chjv2} can be rewritten as
	\begin{equation}\label{dsak}
		\left(\int_{\mathbb{R}^n} |w|^{2\gamma}\, dx\right)^{1/\gamma}\leqslant C(1+\alpha)^{\beta}\int_{\mathbb{R}^n}|w|^2\, dx.
	\end{equation} 
Hence,	the definition of $w$ implies that 
	\begin{equation}\label{sdfsng}
		\left(\int_{\mathbb{R}^n} |Q_{k}|^{(\alpha+2)\gamma}\, dx\right)^{1/\gamma}\leqslant C(1+\alpha)^{\beta}\int_{\mathbb{R}^n}|u_+|^{\alpha+2}\, dx.
	\end{equation} 
	Moreover, set $\eta:=\alpha+2\geqslant 2$, we have that 
	\begin{equation}\label{dvdsfn}
		\left(\int_{\mathbb{R}^n} |Q_{k}|^{\eta\gamma}\, dx\right)^{1/\gamma}\leqslant C(\eta-1)^{\beta}\int_{\mathbb{R}^n}|u_+|^{\eta}\, dx,
	\end{equation} 
	provided the integral in the right hand side is bounded. Here the constant $C$ is independent of $\eta$ and $k$.  We are on a position of taking $k\rightarrow \infty$, obtaining that 
	\begin{equation}\label{sdadasn}
		\|u_+\|_{L^{\eta\gamma}(\mathbb{R}^n)}\leqslant C^{1/\eta}\eta^{\beta/\eta}\|u_+\|_{L^\eta(\mathbb{R}^n)},
	\end{equation} 
	provided $\|u_s\|_{L^{\eta}(\mathbb{R}^n)}$ is bounded. The above estimate allows us to iterate, beginning with $\eta=2$. Now set for $i=0,1,2,3,\cdots$, $\eta_0=2$ and $\eta_{i+1}= \eta_i\gamma$.
	
	As a result, by iteration we have that
	\begin{equation}\label{vjhm}
		\|u_+\|_{L^{\eta_{i+1}}(\mathbb{R}^n)}\leqslant C^{\sum\frac{i}{\gamma^i}}\|u_+\|_{L^2(\mathbb{R}^n)},
	\end{equation} 
	where $C$  depends on $n,W,V,\gamma,\beta$. Since $\gamma>1$, one has that $\sum_{i=0}^{\infty}\frac{i}{\gamma^i}$ is bound. Let $i\rightarrow \infty$, we conclude that 
	\[ \|u_+\|_{L^\infty(\mathbb{R}^n)}\leqslant C\|u_+\|_{L^2(\mathbb{R}^n)}\leqslant C\|u\|_{L^2(\mathbb{R}^n)}, \]
	for renaming $C$ only depends on $n,W,V,\gamma,\beta$.
We observe that,	using a similar argument,  we  obtain that $ \|u_-\|_{L^\infty(\mathbb{R}^n)}\leqslant C\|u\|_{L^2(\mathbb{R}^n)}.$ This yields the desired result~\eqref{svds}

\smallskip

We next prove part (ii). For this, let $s\in(0,1)$ be fixed, in virtue of Lemmata~\ref{lemma uniform bound} and~\ref{lemma H^2}, we obtain that 
$\|u\|_{L^\infty(\mathbb{R}^n)}\leqslant C\left(n,W,\|V\|_{L^\infty(\mathbb{R}^n)}\right)$ and $u\in H^2(\mathbb{R}^n)$.
Furthermore, for any $f\in H^{2}(\mathbb{R}^n)$, owing to \cite[Lemma C.2]{MR3530361},  we have the general (Kato-type) inequaltiy 
\begin{equation}\label{vhj}
	(-\Delta)|f|+(-\Delta)^s|f|\leqslant ({\rm sgn }\, f)\left(-\Delta f+(-\Delta)^s f\right)\quad \text{a.e. on } \mathbb{R}^n 
\end{equation} 
{Here, the function $ ({\rm sgn }\, f)=1$ if $f>0$, $ ({\rm sgn }\, f)=-1$ if $f<0$, and $ ({\rm sgn }\, f)=0$ if $f=0$.}

 Hence, by assumption, we see that $0<\lambda<-W $ and $R>0$ such that $V(x)+\lambda\geqslant 0$ for  $|x|\geqslant R$, 
using~\eqref{vhj} on the set $\mathbb{R}^n\backslash B_R$, we derive that 
\begin{equation}\label{gege}
	(-\Delta)|u|+(-\Delta)^s|u|+\lambda|u|\leqslant 0 \qquad \text{a.e. on } \mathbb{R}^n\backslash B_R.
\end{equation}
Now we claim that 
\begin{equation}\label{bhjkj}
	|u|(x)\leqslant  \frac{C(s,\lambda,R,n,\|u\|_{L^\infty(\mathbb{R}^n)})}{|x|^{n+2s}} \quad  \text{on } \mathbb{R}^n\backslash B_R.
\end{equation}
Indeed, it follows from a comparison arguments.  
We denote the fundamental function $\mathcal{K}_{\lambda}$  for the $-\Delta +(-\Delta)^s+\lambda$ in the whole space by the Fourier transform 
\begin{equation}\label{dvs}
	\hat{\mathcal{K}}_{\lambda}(y)=\frac{1}{|y|^2+|y|^{2s}+\lambda}.
\end{equation}
Thus, we have that 
$\left(-\Delta +(-\Delta)^s+{1}/{2} \right)\mathcal{K}_{\lambda}(x)=\delta_0$ in $\mathbb{R}^n$. From \cite[Lemma~4.4]{DSVZ24}, it follows that  $\mathcal{K}_{\lambda}(x)\geqslant c>0$ for $ |x|\leqslant R$ with some suitable constant $c=c(n,s,R,\lambda)$. Let us take $C_0:=\|u\|_{L^\infty(\mathbb{R}^n)}c^{-1}>0$, this implies that $C_0 \mathcal{K}_{\lambda}(x)\geqslant |u|(x)$ in $B_R$. From this, we define the function 
\begin{equation}\label{reger}
	\omega(x):=C_0 \mathcal{K}_{\lambda}(x) -|u|(x).
\end{equation}
We notice that 
\begin{equation}\label{vsdd}
	(-\Delta)\omega+(-\Delta)^s\omega+\frac{1}{2}\omega\geqslant 0 \qquad \text{a.e. on } \mathbb{R}^n\backslash B_R.
\end{equation}
From Lemma~3.1 in \cite{DSVZ24a}, we have that $\omega\geqslant 0$ in $ \mathbb{R}^n$. 

As a result, using the fact that $\omega$ is continuous away from the origin, we obtain that $\omega\geqslant 0$ on $\mathbb{R}^n$ which implies that  
\begin{equation}\label{vsdvsdds}
	|u|(x)\leqslant C_0\mathcal{K}_{\lambda}(x) \qquad \text{in } \mathbb{R}^n. 
\end{equation}
Therefore, the desired result~\eqref{bhjkj} follows from \cite[Lemma~4.4]{DSVZ24}, which completes this proof of part (i).

 \smallskip
 
To show part (iii), since $	\mathcal{K}_{\lambda}(x)$ is decreasing in $r=|x|$, it suffices to show  that for every $s\in(0,1)$
 \begin{equation}\label{bdfbd}
 	\mathcal{K}_{\lambda}(x)=\int_{0}^{\infty} e^{-\lambda t}\, \mathcal{H}_s(x,t)\, dt\leqslant C_{n,\lambda}|x|^{-n}\qquad \text{for every } |x|>1, 
 \end{equation}
and for $s>0$ sufficiently small,
\begin{equation}\label{bdfbdfdv}
	\mathcal{K}_{\lambda}(x)=\int_{0}^{\infty} e^{-\lambda t}\, \mathcal{H}_s(x,t)\, dt\geqslant C_{n,R,\lambda}\qquad \text{for every } |x|=R,
\end{equation}
where $\mathcal{H}_s(x,t)$ is given in~\eqref{heat kernel}.

Step 1. We prove that the upper bound on $	\mathcal{K}_{\lambda}(x)$ in~\eqref{bdfbd}. Indeed, from Lemma~A.3 in \cite{DSVZ24}, one has that 
\begin{equation}\label{fdbadf}
	\begin{split}
	|x|^{n}\mathcal{H}_s(x,t)&={(2\pi)^{\frac{n}{2}}}\int_{0}^{+\infty}t e^{-t\frac{r^{2s}}{|x|^{2s}}-t\frac{r^{2}}{|x|^{2}}} \left(\frac{2sr^{\frac{n}{2}+2s-1}}{|x|^{2s}}+\frac{2r^{\frac{n}{2}+1}}{|x|^{2}}\right) J_{\frac{n}{2}}(r)\, dr\\
	&= {(2\pi)^{\frac{n}{2}}} \text{Re}\int_{0}^{+\infty} t e^{-t\frac{r^{2s}}{|x|^{2s}}-t\frac{r^{2}}{|x|^{2}}} \left(\frac{2sr^{\frac{n}{2}+2s-1}}{|x|^{2s}}+\frac{2r^{\frac{n}{2}+1}}{|x|^{2}}\right) H^{(1)}_{\frac{n}{2}}(r)\, dr\\
	&={(2\pi)^{\frac{n}{2}}} \text{Re}\int_{L_1}t e^{-t\frac{z^{2s}}{|x|^{2s}}-t\frac{z^{2}}{|x|^{2}}} \left(\frac{2sz^{\frac{n}{2}+2s-1}}{|x|^{2s}}+\frac{2z^{\frac{n}{2}+1}}{|x|^{2}}\right) H^{(1)}_{\frac{n}{2}}(z)\, dz
	\end{split}
\end{equation}
where~$ H^{(1)}_{\frac{n}{2}}(z)$ is the Bessel function of the third kind, Re~$A$ denotes the real part of~$A$ and $L_1:=\left\{z\in\mathbb{C} : \arg z=\frac{\pi}{6}\right\}$.

Hence, one derives that 
\begin{equation}\label{fdbadfvj}
	\begin{split}
		|x|^{n}\mathcal{H}_s(x,t)=	& {(2\pi)^{\frac{n}{2}}}\text{Re}\int_{0}^{+\infty}te^{-t\frac{\left(re^{i\frac{\pi}{6}}\right)^{2s}}{|x|^{2s}}-t\frac{\left(re^{i\frac{\pi}{6}}\right)^{2}}{|x|^{2}}} \left({2s\left(re^{i\frac{\pi}{6}}\right)^{\frac{n}{2}+2s-1}}|x|^{-2s}+2{\left(re^{i\frac{\pi}{6}}\right)^{\frac{n}{2}+1}}{|x|^{-2}}\right) H^{(1)}_{\frac{n}{2}}\left(re^{i\frac{\pi}{6}}\right)e^{i\frac{\pi}{6}}\, dr\\
	\leqslant & {(2\pi)^{\frac{n}{2}}}\int_{0}^{+\infty}t \left({2sr^{\frac{n}{2}+2s-1}}|x|^{-2s}+2{r^{\frac{n}{2}+1}}{|x|^{-2}}\right) \left|H^{(1)}_{\frac{n}{2}}\left(re^{i\frac{\pi}{6}}\right)\right|\, dr.
	\end{split}
\end{equation}
%Furthermore, by taking into account the fact that $ye^{-y}\leqslant e^{-1}$ for every $y>0$, one has  that 
%\begin{equation}\label{dsv}
%	t e^{-t\frac{r^{2s}}{2|x|^{2s}}-t\frac{r^{2}}{2|x|^{2}}}\left(\frac{2sr^{\frac{n}{2}+2s-1}}{|x|^{2s}}+\frac{2r^{\frac{n}{2}+1}}{|x|^{2}}\right)\leqslant 2e^{-1}\frac{2sr^{\frac{n}{2}+2s-1}+2r^{\frac{n}{2}+1}|x|^{2s-2}}{r^{2s}+r^2|x|^{2s-2}}
%	\leqslant 2(2s+2)r^{\frac{n}{2}-1}.
%\end{equation}
Furthermore,  owing to~(A.13) in \cite{DSVZ24}, let $z\in L_1$, one has that
\begin{equation}\label{fv}
	\left|H^{(1)}_{\frac{n}{2}}\left(z\right)\right|\leqslant 2\int_{0}^{+\infty}e^{-\frac{r}{4}e^\tau}e^{\frac{n\tau}{2}}\,d\tau.
\end{equation}
From the above two formulas, one has that, for every $|x|>1$ and $t>0$
\begin{equation}\label{fdbadfvjm}
	\begin{split}
		|x|^{n}\mathcal{H}_s(x,t)
		\leqslant &2t {(2\pi)^{\frac{n}{2}}}\int_{0}^{+\infty} \left({2sr^{\frac{n}{2}+2s-1}}+2{r^{\frac{n}{2}+1}}\right) \int_{0}^{+\infty}e^{-\frac{r}{4}e^\tau}e^{\frac{n\tau}{2}}\,d\tau\, dr\\
		\leqslant &2t {(2\pi)^{\frac{n}{2}}}\left(4^{\frac{n}{2}+2s}\Gamma\left(\frac{n}{2}+2s\right)+4^{\frac{n}{2}+2}\Gamma\left(\frac{n}{2}+2\right)\right)\leqslant tC_n.
	\end{split}
\end{equation}
This implies that  for every $|x|>1$,
\begin{equation}\label{fdgfd}
\mathcal{K}_{\lambda}(x)=\int_{0}^{\infty} e^{-\lambda t}\, \mathcal{H}_s(x,t)\, dt\leqslant	C_n \int_{0}^{\infty} e^{-\lambda t}\,t|x|^{-n}\leqslant C_{n,\lambda}|x|^{-n},
\end{equation}
which conclude the proof of~\eqref{bdfbd}.

Step 2. We show that the uniform lower bound on $	\mathcal{K}_{\lambda}(x)$  in~\eqref{bdfbdfdv} for $s>0$ sufficiently small. For this, from Lemma~A.5 in \cite{DSVZ24}, one has that 
\begin{equation}
		\mathcal{H}_s(x,t) =e^{-\frac{\pi|x|^2}{t}}t^{-\frac{n}{2}}\int_{\mathbb{R}^n}e^{-|y-i\pi xt^{-\frac{1}{2}}|^2}e^{-t^{1-s}|y|^{2s}}\,dy.
\end{equation}
Furthermore, we denote 
\begin{equation}\label{fbd}
	\Lambda:=e^{-t}\int_{\mathbb{R}^n}e^{-|y-i\pi xt^{-\frac{1}{2}}|^2}
	\, dy=e^{-t}\frac{\omega_n\Gamma(\frac{n}{2})}{2}
\end{equation}
thanks to the Residue Theorem. Hence, for any $t\in(1,2)$ and $|x|=R$, we have that 
	\begin{equation*}
	\begin{split}
		\left|	\frac{\mathcal{H}_s(x,t)}{e^{-\pi\frac{|x|^2}{t}}t^{-\frac{n}{2}}} -\Lambda\right|&=\left|\int_{\mathbb{R}^n}e^{-|y-i\pi xt^{-\frac{1}{2}}|^2}\left(e^{-t}-e^{-t^{1-s}|y|^{2s}}\right)\,dy\right|\\
		&\leqslant t\int_{\mathbb{R}^n}e^{-|y|^2+{\pi\frac{|x|^2}{t}}}\left|(t^{-1/2}|y|)^{2s}-1\right|\,dy\leqslant 2e^{\pi{R^2}}\int_{\mathbb{R}^n}e^{-|y|^2}\left|(t^{-1/2}|y|)^{2s}-1\right|\,dy \\
		& \leqslant 2e^{\pi{R^2}} 2s\left(\int_{|y|\geqslant t^{1/2} }e^{-|y|^2}(t^{-1/2}|y|)^{2s}\ln(t^{-1/2}|y|)\,  dy+ \int_{|y|< t^{1/2} }e^{-|y|^2}\ln(t^{1/2}|y|^{-1})\,  dy\right)\\ 
		&\leqslant 2e^{\pi{R^2}} 2s\left(t^{-s-1/2}\int_{|y|\geqslant t^{1/2} }e^{-|y|^2}|y|^{2s+1}\,  dy+ \int_{|y|< t^{1/2} }e^{-|y|^2} 4e^{-1}(t^{1/2}|y|^{-1})^{1/4}\,  dy\right)\\ 
			&\leqslant 2e^{\pi{R^2}} 2s\omega_{n-1}\left(\Gamma\left(\frac{n}{2}+2s\right)+8e^{-1}2^{\frac{n}{2}-\frac{1}{8}}(n-1/4)^{-1}\right)\leqslant s C_{n,R}.
	\end{split}
\end{equation*}
From this and~\eqref{fbd}, taking $s_1=(4C_{n,R})^{-1}e^{-2}{\omega_n\Gamma(\frac{n}{2})}$,  one has that for every $s\leqslant s_1$, $t\in(1,2)$ and $|x|=R$
\begin{equation*}
		\frac{\mathcal{H}_s(x,t)}{e^{-\pi\frac{|x|^2}{t}}t^{-\frac{n}{2}}} \geqslant \Lambda-s C_{n,R}\geqslant e^{-2}\frac{\omega_n\Gamma(\frac{n}{2})}{4}.
\end{equation*}
Recall that $\mathcal{H}_s(x,t) $ is nonnegative, we obtain that for every $|x|=R$, 
\begin{equation}\label{bdfbdfdvd}
	\begin{split}
		\mathcal{K}_{\lambda}(x)&=\int_{0}^{\infty} e^{-\lambda t}\, \mathcal{H}_s(x,t)\, dt\geqslant e^{-2}\frac{\omega_n\Gamma(\frac{n}{2})}{4}\int_{1}^{2} e^{-\lambda t}e^{-\pi\frac{|x|^2}{t}}t^{-\frac{n}{2}}\, dt\\
		&= e^{-2}\frac{\omega_n\Gamma(\frac{n}{2})}{4}\int_{R^2/2}^{R^2} e^{-\lambda R^2y^{-1}}e^{-\pi y}R^{-{n}+{2}}y^{\frac{n}{2}-2}\, dy= C_{n,\lambda,R}>0.
	\end{split}
\end{equation}
This implies the desired result~\eqref{bdfbdfdv}.
\end{proof}

\bibliographystyle{is-abbrv}

\bibliography{manuscript}

\begin{thebibliography}{10}
\ifx \showCODEN  \undefined \def \showCODEN #1{CODEN #1}  \fi
\ifx \showISBN   \undefined \def \showISBN  #1{ISBN #1}   \fi
\ifx \showISSN   \undefined \def \showISSN  #1{ISSN #1}   \fi
\ifx \showLCCN   \undefined \def \showLCCN  #1{LCCN #1}   \fi
\ifx \showPRICE  \undefined \def \showPRICE #1{#1}        \fi
\ifx \showURL    \undefined \def \showURL {URL }          \fi
\ifx \path       \undefined \input path.sty               \fi
\ifx \ifshowURL \undefined
     \newif \ifshowURL
     \showURLtrue
\fi

\bibitem{AT91}
C.~J. Amick and J.~F. Toland.
\newblock Uniqueness and related analytic properties for the {B}enjamin-{O}no
  equation---a nonlinear {N}eumann problem in the plane.
\newblock {\em Acta Math.}, 167\penalty0 (1-2):\penalty0 107--126, 1991.
\newblock \showISSN{0001-5962}.
\newblock \ifshowURL {\showURL \path|https://doi.org/10.1007/BF02392447|}\fi.

\bibitem{BCCI12}
G.~Barles, E.~Chasseigne, A.~Ciomaga, and C.~Imbert.
\newblock Lipschitz regularity of solutions for mixed integro-differential
  equations.
\newblock {\em J. Differential Equations}, 252\penalty0 (11):\penalty0
  6012--6060, 2012.
\newblock \showISSN{0022-0396}.
\newblock \ifshowURL {\showURL
  \path|https://doi.org/10.1016/j.jde.2012.02.013|}\fi.

\bibitem{BDVV22b}
S.~Biagi, S.~Dipierro, E.~Valdinoci, and E.~Vecchi.
\newblock Mixed local and nonlocal elliptic operators: regularity and maximum
  principles.
\newblock {\em Comm. Partial Differential Equations}, 47\penalty0 (3):\penalty0
  585--629, 2022.
\newblock \showISSN{0360-5302}.
\newblock \ifshowURL {\showURL
  \path|https://doi.org/10.1080/03605302.2021.1998908|}\fi.

\bibitem{BVDV21}
S.~Biagi, E.~Vecchi, S.~Dipierro, and E.~Valdinoci.
\newblock Semilinear elliptic equations involving mixed local and nonlocal
  operators.
\newblock {\em Proc. Roy. Soc. Edinburgh Sect. A}, 151\penalty0 (5):\penalty0
  1611--1641, 2021.
\newblock \showISSN{0308-2105}.
\newblock \ifshowURL {\showURL \path|https://doi.org/10.1017/prm.2020.75|}\fi.

\bibitem{BD13}
D.~Blazevski and D.~del Castillo-Negrete.
\newblock Local and nonlocal anisotropic transport in reversed shear magnetic
  fields: Shearless cantori and nondiffusive transport.
\newblock {\em Phys. Rev. E}, 87:\penalty0 063106, Jun 2013.
\newblock \ifshowURL {\showURL
  \path|https://link.aps.org/doi/10.1103/PhysRevE.87.063106|}\fi.

\bibitem{MR2354493}
L.~Caffarelli and L.~Silvestre.
\newblock An extension problem related to the fractional {L}aplacian.
\newblock {\em Comm. Partial Differential Equations}, 32\penalty0
  (7-9):\penalty0 1245--1260, 2007.
\newblock \showISSN{0360-5302}.
\newblock \ifshowURL {\showURL
  \path|https://doi.org/10.1080/03605300600987306|}\fi.

\bibitem{MR549115}
R.~Carmona.
\newblock Regularity properties of {S}chr\"{o}dinger and {D}irichlet
  semigroups.
\newblock {\em J. Functional Analysis}, 33\penalty0 (3):\penalty0 259--296,
  1979.
\newblock \showISSN{0022-1236}.
\newblock \ifshowURL {\showURL
  \path|https://doi.org/10.1016/0022-1236(79)90068-5|}\fi.

\bibitem{MR1054115}
R.~Carmona, W.~C. Masters, and B.~Simon.
\newblock Relativistic {S}chr\"{o}dinger operators: asymptotic behavior of the
  eigenfunctions.
\newblock {\em J. Funct. Anal.}, 91\penalty0 (1):\penalty0 117--142, 1990.
\newblock \showISSN{0022-1236}.
\newblock \ifshowURL {\showURL
  \path|https://doi.org/10.1016/0022-1236(90)90049-Q|}\fi.

\bibitem{MR4670033}
X.~Chang, Y.~Sato, and C.~Zhang.
\newblock Multi-peak solutions of a class of fractional {$p$}-{L}aplacian
  equations.
\newblock {\em J. Geom. Anal.}, 34\penalty0 (1):\penalty0 Paper No. 29, 36,
  2024.
\newblock \showISSN{1050-6926,1559-002X}.
\newblock \ifshowURL {\showURL
  \path|https://doi.org/10.1007/s12220-023-01475-8|}\fi.

\bibitem{CKS12}
Z.-Q. Chen, P.~Kim, R.~Song, and Z.~Vondra{\v{c}}ek.
\newblock Boundary {H}arnack principle for {$\Delta+\Delta^{\alpha/2}$}.
\newblock {\em Trans. Amer. Math. Soc.}, 364\penalty0 (8):\penalty0 4169--4205,
  2012.
\newblock \showISSN{0002-9947,1088-6850}.
\newblock \ifshowURL {\showURL
  \path|https://doi.org/10.1090/S0002-9947-2012-05542-5|}\fi.

\bibitem{Coffman72}
C.~V. Coffman.
\newblock Uniqueness of the ground state solution for {$\Delta u-u+u\sp{3}=0$}\
  and a variational characterization of other solutions.
\newblock {\em Arch. Rational Mech. Anal.}, 46:\penalty0 81--95, 1972.
\newblock \showISSN{0003-9527}.
\newblock \ifshowURL {\showURL \path|https://doi.org/10.1007/BF00250684|}\fi.

\bibitem{MR1181725}
V.~Coti~Zelati and P.~H. Rabinowitz.
\newblock Homoclinic type solutions for a semilinear elliptic {PDE} on {${\bf
  R}^n$}.
\newblock {\em Comm. Pure Appl. Math.}, 45\penalty0 (10):\penalty0 1217--1269,
  1992.
\newblock \showISSN{0010-3640}.
\newblock \ifshowURL {\showURL
  \path|https://doi.org/10.1002/cpa.3160451002|}\fi.

\bibitem{llaveV09}
R.~de~la Llave and E.~Valdinoci.
\newblock A generalization of {A}ubry-{M}ather theory to partial differential
  equations and pseudo-differential equations.
\newblock {\em Ann. Inst. H. Poincar\'{e} Anal. Non Lin\'{e}aire}, 26\penalty0
  (4):\penalty0 1309--1344, 2009.
\newblock \showISSN{0294-1449}.
\newblock \ifshowURL {\showURL
  \path|https://doi.org/10.1016/j.anihpc.2008.11.002|}\fi.

\bibitem{MR2944369}
E.~Di~Nezza, G.~Palatucci, and E.~Valdinoci.
\newblock Hitchhiker's guide to the fractional {S}obolev spaces.
\newblock {\em Bull. Sci. Math.}, 136\penalty0 (5):\penalty0 521--573, 2012.
\newblock \showISSN{0007-4497}.
\newblock \ifshowURL {\showURL
  \path|https://doi.org/10.1016/j.bulsci.2011.12.004|}\fi.

\bibitem{DSVZ24a}
S.~Dipierro, X.~Su, E.~Valdinoci, and J.~Zhang.
\newblock Concentration phenomena for the mixed local/nonlocal schr\"{o}dinger
  equation with dirichlet datum.
\newblock 2024.
\newblock Preprint.

\bibitem{DSVZ24}
S.~Dipierro, X.~Su, E.~Valdinoci, and J.~Zhang.
\newblock Qualitative properties of positive solutions of a mixed order
  nonlinear schr\"{o}dinger equation.
\newblock 2024.
\newblock To appear in Discrete and Continuous Dynamical Systems.

\bibitem{DV21}
S.~Dipierro and E.~Valdinoci.
\newblock Description of an ecological niche for a mixed local/nonlocal
  dispersal: an evolution equation and a new {N}eumann condition arising from
  the superposition of {B}rownian and {L}\'{e}vy processes.
\newblock {\em Phys. A}, 575:\penalty0 Paper No. 126052, 20, 2021.
\newblock \showISSN{0378-4371}.
\newblock \ifshowURL {\showURL
  \path|https://doi.org/10.1016/j.physa.2021.126052|}\fi.

\bibitem{MR2597943}
L.~C. Evans.
\newblock {\em Partial differential equations}, volume~19 of {\em Graduate
  Studies in Mathematics}.
\newblock American Mathematical Society, Providence, RI, second edition, 2010.
\newblock \showISBN{978-0-8218-4974-3}.
\newblock xxii+749 pp.
\newblock \ifshowURL {\showURL \path|https://doi.org/10.1090/gsm/019|}\fi.

\bibitem{MR3207007}
M.~M. Fall and E.~Valdinoci.
\newblock Uniqueness and nondegeneracy of positive solutions of
  {$(-\Delta)^su+u=u^p$} in {$\Bbb R^N$} when {$s$} is close to 1.
\newblock {\em Comm. Math. Phys.}, 329\penalty0 (1):\penalty0 383--404, 2014.
\newblock \showISSN{0010-3616}.
\newblock \ifshowURL {\showURL
  \path|https://doi.org/10.1007/s00220-014-1919-y|}\fi.

\bibitem{FHbook}
R.~P. Feynman and A.~R. Hibbs.
\newblock {\em Quantum mechanics and path integrals}.
\newblock Dover Publications, Inc., Mineola, NY, emended edition, 2010.
\newblock \showISBN{978-0-486-47722-0; 0-486-47722-3}.
\newblock xii+371 pp.
\newblock Emended and with a preface by Daniel F. Styer.

\bibitem{MR3070568}
R.~L. Frank and E.~Lenzmann.
\newblock Uniqueness of non-linear ground states for fractional {L}aplacians in
  {$\Bbb{R}$}.
\newblock {\em Acta Math.}, 210\penalty0 (2):\penalty0 261--318, 2013.
\newblock \showISSN{0001-5962}.
\newblock \ifshowURL {\showURL
  \path|https://doi.org/10.1007/s11511-013-0095-9|}\fi.

\bibitem{MR3530361}
R.~L. Frank, E.~Lenzmann, and L.~Silvestre.
\newblock Uniqueness of radial solutions for the fractional {L}aplacian.
\newblock {\em Comm. Pure Appl. Math.}, 69\penalty0 (9):\penalty0 1671--1726,
  2016.
\newblock \showISSN{0010-3640}.
\newblock \ifshowURL {\showURL \path|https://doi.org/10.1002/cpa.21591|}\fi.

\bibitem{MR969899}
M.~K. Kwong.
\newblock Uniqueness of positive solutions of {$\Delta u-u+u^p=0$} in {${\bf
  R}^n$}.
\newblock {\em Arch. Rational Mech. Anal.}, 105\penalty0 (3):\penalty0
  243--266, 1989.
\newblock \showISSN{0003-9527}.
\newblock \ifshowURL {\showURL \path|https://doi.org/10.1007/BF00251502|}\fi.

\bibitem{MR3726909}
G.~Leoni.
\newblock {\em A first course in {S}obolev spaces}, volume 181 of {\em Graduate
  Studies in Mathematics}.
\newblock American Mathematical Society, Providence, RI, second edition, 2017.
\newblock \showISBN{978-1-4704-2921-8}.
\newblock xxii+734 pp.
\newblock \ifshowURL {\showURL \path|https://doi.org/10.1090/gsm/181|}\fi.

\bibitem{MR778974}
P.-L. Lions.
\newblock The concentration-compactness principle in the calculus of
  variations. {T}he locally compact case. {II}.
\newblock {\em Ann. Inst. H. Poincar\'{e} Anal. Non Lin\'{e}aire}, 1\penalty0
  (4):\penalty0 223--283, 1984.
\newblock \showISSN{0294-1449}.
\newblock \ifshowURL {\showURL
  \path|http://www.numdam.org/item?id=AIHPC_1984__1_4_223_0|}\fi.

\bibitem{MR1201323}
K.~McLeod.
\newblock Uniqueness of positive radial solutions of {$\Delta u+f(u)=0$} in
  {${\bf R}^n$}. {II}.
\newblock {\em Trans. Amer. Math. Soc.}, 339\penalty0 (2):\penalty0 495--505,
  1993.
\newblock \showISSN{0002-9947}.
\newblock \ifshowURL {\showURL \path|https://doi.org/10.2307/2154282|}\fi.

\bibitem{MR886933}
K.~McLeod and J.~Serrin.
\newblock Uniqueness of positive radial solutions of {$\Delta u+f(u)=0$} in
  {${\bf R}^n$}.
\newblock {\em Arch. Rational Mech. Anal.}, 99\penalty0 (2):\penalty0 115--145,
  1987.
\newblock \showISSN{0003-9527}.
\newblock \ifshowURL {\showURL \path|https://doi.org/10.1007/BF00275874|}\fi.

\bibitem{MR493421}
M.~Reed and B.~Simon.
\newblock {\em Methods of modern mathematical physics. {IV}. {A}nalysis of
  operators}.
\newblock Academic Press [Harcourt Brace Jovanovich, Publishers], New
  York-London, 1978.
\newblock \showISBN{0-12-585004-2}.
\newblock xv+396 pp.

\bibitem{MR670130}
B.~Simon.
\newblock Schr\"{o}dinger semigroups.
\newblock {\em Bull. Amer. Math. Soc. (N.S.)}, 7\penalty0 (3):\penalty0
  447--526, 1982.
\newblock \showISSN{0273-0979}.
\newblock \ifshowURL {\showURL
  \path|https://doi.org/10.1090/S0273-0979-1982-15041-8|}\fi.

\bibitem{SVWZ22}
X.~Su, E.~Valdinoci, Y.~Wei, and J.~Zhang.
\newblock Regularity results for solutions of mixed local and nonlocal elliptic
  equations.
\newblock {\em Math. Z.}, 302\penalty0 (3):\penalty0 1855--1878, 2022.
\newblock \showISSN{0025-5874}.
\newblock \ifshowURL {\showURL
  \path|https://doi.org/10.1007/s00209-022-03132-2|}\fi.

\bibitem{MR4803021}
X.~Su, E.~Valdinoci, Y.~Wei, and J.~Zhang.
\newblock Multiple solutions for mixed local and nonlocal elliptic equations.
\newblock {\em Math. Z.}, 308\penalty0 (3):\penalty0 Paper No. 40, 2024.
\newblock \showISSN{0025-5874,1432-1823}.
\newblock \ifshowURL {\showURL
  \path|https://doi.org/10.1007/s00209-024-03599-1|}\fi.

\bibitem{MR4808805}
X.~Su, E.~Valdinoci, Y.~Wei, and J.~Zhang.
\newblock On some regularity properties of mixed local and nonlocal elliptic
  equations.
\newblock {\em J. Differential Equations}, 416:\penalty0 576--613, 2025.
\newblock \showISSN{0022-0396,1090-2732}.
\newblock \ifshowURL {\showURL
  \path|https://doi.org/10.1016/j.jde.2024.10.003|}\fi.

\bibitem{Willembook}
M.~Willem.
\newblock {\em Minimax theorems}.
\newblock Number~24. Springer, 1996.

\end{thebibliography}

	\end{document}